\numberwithin{equation}{section}
\theoremstyle{plain}
\newtheorem{thm}{Theorem}[subsection]
\newtheorem{cor}[thm]{Corollary}
\newtheorem{question}[thm]{Problem}
\newtheorem{prop}[thm]{Proposition}
\theoremstyle{definition}
\newtheorem{defin}[thm]{Definition}
\newtheorem{remark[thm]}{Remark}
\def\TC{{\mathop\mathrm{TC}\,}}
\def\dTC{{\mathop\mathrm{dTC}\,}}
\def\cat{\protect\operatorname{cat}}
\def\cd{\protect\operatorname{cd}}
\def\Z{{\mathbb Z}}
\def\1{\hbox{\rm\rlap {1}\hskip.03in{\rom I}}}
\def\Bbbone{{\rm1\mathchoice{\kern-0.25em}{\kern-0.25em}
{\kern-0.2em}{\kern-0.2em}I}}
\long\def\forget#1\forgotten{} %
\begin{document}

\title[On distributional topological complexity of groups and manifolds]
{On distributional topological complexity of groups and manifolds}
\author[A.~Dranishnikov]
{Alexander Dranishnikov}
\address{{Alexander Dranishnikov, Department of Mathematics, University
of Florida, 358 Little Hall, Gainesville, FL 32611-8105, USA}}
\email{dranish@math.ufl.edu}

\thanks{}

\subjclass[2000]
{Primary  55M30,  %% Global topological methods (\`a la Gromov)
Secondary 57N65, 20K01, 20F67  %% Algebraic topology of manifolds
}

\begin{abstract} 
We prove the equality $\dTC(\Gamma)=\TC(\Gamma)$   for distributional topological complexity of torsion free hyperbolic and of torsion free nilpotent groups.

For the distributional topological complexity of lens spaces we prove the inequality  $\dTC(L^n_p)\le 2p-1$ and for the distributional LS-category the inequality $d\cat(L^n_p)\le p-1$ where the latter turns into equality for prime $p$ and $n>p$.  We use these inequalities to construct counter-examples to the product formula for $d\cat$ and $\dTC$.
\end{abstract}

\maketitle

\section{Introduction}
The topological complexity TC$(X)$ of a configuration space $X$ is a numerical invariant that estimates the difficulty  of constructing a navigation algorithm in $X$. A navigation algorithm on $X$ is a map $f:X\times X\to P(X)$ from the product $X\times X$ to the space $P(X)=Map([0,1]\to X)$ of all paths in $X$ such that for each pair $(x_0,x_1)\in X\times X$
the path $f(x_0,x_1):[0,1]\to X$ satisfies $f(x_0,x_1)(0)=x_0$, the initial state of a robot, and the path $f(x_0,x_1)(1)=x_1$, the terminal state. Clearly, if $X$ admits a continuous navigation algorithm, then $X$  as a topological space is contractible. It is a very restrictive condition. 

The topological complexity  TC$(X)$ was defined by Farber as the minimal number of pieces (minus one nowadays) needed to cover $X\times X$ in such a way that on each piece there is a continuous navigation algorithm~\cite{F1,F2}.
The numerical invariant $\TC$ is similar in spirit to an almost a hundred-year-old invariant called the Lusternik-Schnirelmann category, $\cat X$~\cite{LS,CLOT}. 
It was introduced by Lusternik and Schnirelmann in their solution of  Poincar\'e's problem concerning the number of minimal geodesics on  a 2-sphere~\cite{LS}. Since then the LS-category has found many applications in different areas of mathematics. 

Both $\TC$ and $\cat$ are homotopy invariant: if a topological space $X$ is homotopy equivalent to $Y$ then
TC$(X)$=TC$(Y)$ and $\cat X=\cat Y$.  It means that both invariants can be defined for discrete groups $G$ as TC$(G)=$TC$(BG)$ and $\cat(G)=\cat(BG)$ where $BG$ is a classifying CW complex for $G$ or equivalently $BG$  a $K(G,1)$-space. Since for fixed $G$ all $K(G,1)$-complexes are homotopy equivalent, the group invariants
TC$(G)$ and $\cat(G)$ are well-defined. In the '50s Eilenberg and Ganea proved that $\cat(G)$ equals the cohomological dimension $\cd(G)$ of $G$~\cite{EG}.
Thus, $\cat(G)$ does not provide a new invariant to group theory. On the other hand TC$(G)$ is quite intriguing invariant which is not yet been computed for many classes of groups.

Recently, Knudsen and Weinberger~\cite{KW1} and independently Jauhari and the author~\cite{DJ} have modified $\cat$ and $\TC$ to their probabilistic versions obtaining new invariants:
 the distributional topological complexity, $\dTC$ and the distributional LS-category $d\cat$. Knudsen and Weinberger used the terms {\em analog category} and {\em analog} complexity where 
the word ``analog"  emphasizes the opposite of ``digital". These new concepts already were studied, in addition to~\cite{DJ} and~\cite{KW1} in the papers~\cite{Dr2, KW2, J1, J2, JO1, JO2, DaJ}.

Informally the difference between the classical $\cat$ and the new $d\cat$ can be explained as follows. 
By definition, $\cat X\le n$ if there exists an open cover $\mathcal U=\{U_0,\dots,U_n\}$ of $X$ together with contractions of each $U_i$ to a base point $x_0$. A contraction of $U_i$ to $x_0$ can be seen as a family of paths $\phi_x^i$ from $x\in U_i$ to $x_0$ that depends continuously on $x$. Using  a partition of unity subordinated by $\mathcal U$ one can assign weights $\lambda_x^i\in[0,1]$ to each path $\phi_x^i$, $0\le i\le n$, $x\in X$, depending continuously on $x$ and satisfying $\sum_i\lambda_x^i=1$.
This yields an ordered multi-path $$\Phi_x=(\lambda_x^0\phi_x^0,\lambda_x^1\phi_x^1,\dots,\lambda_x^n\phi_x^n)$$ of cardinality $n+1$ from $x\in X$ to $x_0$ depending continuously on $x$. Thus, the collection of all multi-paths $\Phi_x$, $x\in X$, defines a continuous multi-path contraction of $X$ to $x_0$. It is not difficult to see that the inequality $\cat X\le n$ is equivalent to the existence of a continuous contraction of $X$ to $x_0$ by means of ordered multi-paths of cardinality $n+1$~\cite{F3}. In the definition of the distributional category $d\cat$ we simply drop the requirement of ordering. Clearly, it yields the inequality $d\cat\le\cat$. We note that by loosing the order turns a multi-path into a probability measure on the space of paths. 

The same idea is used in the definition of distributional topological complexity. A navigation algorithm in in this setting is a continuous choice of distributed paths between any $x$ and $y$ in the configuration space $X$. In the language of robotics, this means that
for each $x$ and $y$ we have at most $n+1$ paths with continuously distributed probabilities, allowing a robot to choose a certain path, such that the family of all ``probabilistic" multi-paths continuously depends on $(x,y)\in X\times X$.

\

It turns out that the distributional analogs can differ significantly from the classical invariants, both for groups and for spaces.
In particular, it was proved in both~\cite{DJ} and ~\cite{KW1} that $\dTC(\mathbb Z_2)=d\cat(\mathbb Z_2)=1$ whereas TC$(\mathbb Z_2)=\cat(\mathbb Z_2)=\cd(\mathbb Z_2)=\infty$. Moreover, we proved that $$\dTC(\mathbb RP^n)=d\cat(\mathbb RP^n)=1$$ for all $n$. Note that in the classical case $\cat(\mathbb RP^n)=n$
and TC$(\mathbb RP^n)$ equals, with the exception $n=1,3,7$, the immersion number for $\mathbb RP^n$~\cite{FTY}.

Knudsen and Weinberger proved~\cite{KW1} that for torsion free groups $\Gamma$ the distributional category coincides with the Lusternik-Schnirelmann category, i.e., $d\cat\Gamma=\cat\Gamma$.
It remains an open problem whether the similar equality $\dTC(\Gamma)=$TC$(\Gamma)$ holds for topological complexity. 
In the first part of this paper we prove the following. 

\

{\bf Theorem A.} (Theorem~\ref{Frob}) {\em For a Frobenius injective group} $\pi$  $$\dTC(\pi)=\TC(\pi).$$
A group $G$ is called {\em Frobenius injective} if the  Frobenius map $f:G\to G$, $f(g)=g^k$, is injective for every $k$.
We prove that  torsion free hyperbolic groups and torsion free nilpotent groups are Frobenius injective.
Therefore, for non-orientable surfaces of genus $g\ge 3$ we have the equality $\dTC(N_g)=4$.
Thus, for non-orientable surfaces the only remaining open question is about the value of $\dTC$ for the Klein bottle.
We recall that determining the classical $\TC$ in this case was quite challenging problem until it was settled in~\cite{CV}.

\

It turns out that for finite groups probabilistic versions of the LS-category and the topological complexity differ drastically from their classical counterparts.
In~\cite{KW1} Knudsen and Weinberger proved that
$$d\cat(G)\le \dTC(G)\le|G|-1$$ for any finite group $G$. Later it was shown that $d\cat(\mathbb Z_p)=\dTC(\mathbb Z_p)=p-1$~\cite{Dr2} and, moreover that $$d\cat(G)=\dTC(G)=|G|-1$$ for any $p$-group~\cite{KW2}.  

Thus, for an infinite-dimensional lens space $L^\infty_p$ we have $$d\cat(L^\infty_p)=\dTC(L^\infty_p)=p-1.$$
As mentioned earlier, for $p=2$ this equality holds for finite-dimensional lens spaces. For prime $p>2$, this remains an open problem. 

In this paper, for finite-dimensional lens spaces $L^m_r$ we obtain the following upper bounds where $r$ is non necessarily prime.

\

{\bf Theorem B.} (Corollary~\ref{upper dcat} and Theorem~\ref{TC upper}) {\em For all $m$ and  $r$, $$d\cat(L^m_r)\le r-1$$ 
and for odd $r$,}
 $$
\dTC(L^m_r)\le\begin{cases}r-1\ & if\ m\le r-1\\ 
m\ & if\ m\le 2r-1\\
2r-1\ & if\ m\ge 2r-1.\\
\end{cases}
$$
We note that our upper bound for $\dTC$ is not always optimal.

In the infinite-dimensional case the proof of the lower bounds is based on the equivariant Sullivan conjecture, which is a theorem due to Gunnar Carlsson~\cite{C} and others~\cite{DMN,La}. This powerful technique does not apply in the finite-dimensional case. In this paper, we obtain lower bounds for finite-dimensional lens spaces using classical methods known as Borsuk--Ulam-types theorems. In the cases when our lower and upper bounds agree, we obtain the following 

\

{\bf Theorem C.} (Theorems~\ref{=} and~\ref{p^k}) {\em For prime $p$ and  $m\ge p$,
 $$d\cat(L^m_p)=p-1.$$ 
For odd prime $p$ and $m\ge kp^k-(k+2)p^{k-1}+3$,
$$d\cat(L^m_{p^k})= p^k-1.$$ }

At the end of the paper we use our upper and lower bounds  for  the distributional category and topological complexity of lens spaces to produce counterexamples to the product formulas for $d\cat$ and $\dTC$ of manifolds.
The product formulas (inequalities)
$$\cat(X\times Y)\le\cat X+\cat Y\ \ \ \text{and}\ \ \ \ \TC(X\times Y)\le\TC(X)+\TC(Y)$$ are well known for the classical category and  topological complexity~\cite{CLOT,F2}.

\section{Preliminaries}
\subsection{$\TC$  and $\cat$ for spaces and groups} The definition of the LS-category $\cat(X)$ of a topological space was given in the introduction.
By the Eilenberg-Ganea theorem, for groups we have $\cat(\Gamma)=\cd{\Gamma}$.

The topological complexity $\TC(X)$ of a space $X$ is the minimal number $n$ such that $X\times X$ can be covered by $n+1$ open sets $U_0,\dots, U_n$, each admitting a continuous map $s_i:U_i\to P(X)$ to the path space on $X$ satisfying  $s_i(x,y)(0)=x$ and $s_i(x,y)(1)=y$ for all $(x,y)\in U_i$.

For a group $\Gamma$, the invariant $\TC(\Gamma)=\TC(B\Gamma)$ is difficult to compute. For groups with finite $B\Gamma$, the following characterization holds~\cite{FGLO}
\begin{thm}\label{Farber}
For a group $\Gamma$ with finite $B\Gamma$, the topological complexity $\TC(\Gamma)$ equals the minimal integer $k$ such that the canonical map
$$\mu:E(\Gamma\times\Gamma)\to E_{\mathcal D}(\Gamma\times\Gamma)$$ is $(\Gamma\times\Gamma)$-equivariantly homotopic to a map whose image lies in
the $k$-dimensional skeleton $E_{\mathcal D}(\Gamma\times\Gamma)^{(k)}$.
\end{thm}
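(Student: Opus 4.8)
The plan is to reduce the statement to Schwarz's join characterisation of sectional category, pass through the Borel construction, and then recognise the resulting join tower as the skeletal filtration of a standard model of $E_{\mathcal D}(\Gamma\times\Gamma)$. Write $G=\Gamma\times\Gamma$ and let $\Delta=\Delta(\Gamma)\le G$ be the diagonal subgroup. First I would recall that for any $Y$ with the homotopy type of a CW complex one has $\TC(Y)=\operatorname{secat}(\pi)$ for the endpoint fibration $\pi\colon P(Y)\to Y\times Y$, $\gamma\mapsto(\gamma(0),\gamma(1))$, and that when $Y=B\Gamma$ the inclusion of constant paths $Y\hookrightarrow P(Y)$ is a homotopy equivalence identifying $\pi$ with the map $B\Delta\to BG$ induced by $\Delta\hookrightarrow G$. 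Hence $\TC(\Gamma)=\operatorname{secat}(B\Delta\to BG)$. Geometric finiteness enters here only to guarantee that $BG$ can be taken to be a finite complex, so that Schwarz's theory and the skeletal filtrations used below stay within their usual hypotheses.

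Next I would invoke Schwarz's criterion: for a fibration $p\colon E\to B$ over a CW base, $\operatorname{secat}(p)\le k$ if and only if the $(k+1)$-fold fiberwise join $p^{*(k+1)}\colon E*_B\dots*_B E\to B$ admits a section. The map $B\Delta\to BG$ is the associated bundle $EG\times_G(G/\Delta)\to BG$, and a fiberwise join of associated bundles replaces the fibre by its iterated join, so $p^{*(k+1)}$ is the bundle $EG\times_G(G/\Delta)^{*(k+1)}\to BG$. Since sections of a Borel-construction bundle $EG\times_G Z\to BG$ correspond to $G$-equivariant maps $EG\to Z$, we get: $\TC(\Gamma)\le k$ if and only if there is a $G$-map $EG\to(G/\Delta)^{*(k+1)}$, where $EG=E(\Gamma\times\Gamma)$.

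The geometric core is the identification of the join tower. I would check that $(G/\Delta)^{*\infty}=\bigcup_n(G/\Delta)^{*n}$ is a model for $E_{\mathcal D}(\Gamma\times\Gamma)$: for $K\le G$ the fixed set $\big((G/\Delta)^{*\infty}\big)^K$ is the infinite join of $(G/\Delta)^K$, which is empty when $K\notin\mathcal D$ and contractible when $K\in\mathcal D$, while the point stabilisers of its canonical $G$-CW structure are intersections of conjugates of $\Delta$ and hence lie in $\mathcal D$. In that $G$-CW structure $(G/\Delta)^{*(n)}$ is precisely the $(n-1)$-dimensional skeleton, so $(G/\Delta)^{*(k+1)}=E_{\mathcal D}(\Gamma\times\Gamma)^{(k)}$. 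Finally, any $G$-map $EG\to E_{\mathcal D}(\Gamma\times\Gamma)$ is unique up to $G$-homotopy (the universal property of $E_{\mathcal D}G$ applied to the free $G$-CW complex $EG$, whose stabilisers are trivial and so in $\mathcal D$); hence post-composing a $G$-map $EG\to E_{\mathcal D}(\Gamma\times\Gamma)^{(k)}$ with the skeletal inclusion yields a map $G$-homotopic to $\mu$, and conversely a $G$-homotopy carrying $\mu$ into the $k$-skeleton provides, after restricting the target, a $G$-map $EG\to(G/\Delta)^{*(k+1)}$. Concatenating the equivalences gives that $\TC(\Gamma)\le k$ if and only if $\mu$ is $(\Gamma\times\Gamma)$-equivariantly homotopic to a map with image in $E_{\mathcal D}(\Gamma\times\Gamma)^{(k)}$, which is the assertion.

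I expect the main obstacle to be bookkeeping rather than anything conceptual. One must pin down the $G$-CW structure on the join model so that ``$(n+1)$-st join $=$ $n$-skeleton'' holds on the nose, since the statement refers to that particular skeleton; and one must be careful threading the three successive translations --- open-cover sectional category $\leftrightarrow$ fiberwise-join sectional category $\leftrightarrow$ equivariant maps into the join --- which is exactly where finiteness of $BG$ earns its keep. The remaining ingredients (Schwarz's theorem, the join formula for fiberwise joins of associated bundles, the fixed-point computation for infinite joins, and uniqueness of maps into classifying spaces for families) are standard.
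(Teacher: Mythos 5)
This theorem is not proved in the paper at all; it is quoted from~\cite{FGLO}, so there is no ``paper's own proof'' to compare against, and the evaluation has to be on the merits of your argument alone. Your first two reductions are sound and are the ones one would expect: $\TC(B\Gamma)=\operatorname{secat}(B\Delta\to BG)$ for $G=\Gamma\times\Gamma$, Schwarz's join criterion turns this into the existence of a section of $EG\times_G(G/\Delta)^{*(k+1)}\to BG$, and sections of a Borel bundle are $G$-maps $EG\to(G/\Delta)^{*(k+1)}$. That much is correct.

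The gap is the identification ``$(G/\Delta)^{*(k+1)}=E_{\mathcal D}(\Gamma\times\Gamma)^{(k)}$'', which is false for any natural $G$-CW structure on the join model. In $(G/\Delta)^{*(n+1)}$ (the ordered join, which is what Schwarz's criterion produces) a $d$-cell is indexed by a choice of $d+1$ of the $n+1$ join coordinates together with a coset in each chosen slot; there are $\binom{n+1}{d+1}|G/\Delta|^{d+1}$ of them. Already at the level of vertices, $(G/\Delta)^{*(n+1)}$ has $n+1$ disjoint copies of $G/\Delta$ as its $0$-cells, whereas $(G/\Delta)^{*1}$ has one copy. So the filtration of $(G/\Delta)^{*\infty}$ by the iterated joins is \emph{not} the skeletal filtration of any CW structure: each stage $(G/\Delta)^{*(n+1)}$, although $n$-dimensional, contains low-dimensional cells that are absent from the earlier stages. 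Your claim ``in that $G$-CW structure $(G/\Delta)^{*(n)}$ is precisely the $(n-1)$-dimensional skeleton'' is therefore wrong, and it is exactly the step the whole comparison rests on. A minor secondary inaccuracy in the same paragraph: $(G/\Delta)^K$ need not be empty for $K\notin\mathcal D$ (any nontrivial $K$ contained in a conjugate of $\Delta$ but not equal to an intersection of such conjugates fixes a coset), but L\"uck's criterion, as recorded in Theorem~\ref{Lu}, only requires contractibility for $K\in\mathcal D$, so this does not break your fixed-point verification that $(G/\Delta)^{*\infty}$ models $E_{\mathcal D}G$ for torsion-free $\Gamma$.

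The argument is salvageable, but it needs an extra layer of equivariant obstruction theory rather than an on-the-nose identification. For one direction: a $G$-map $EG\to(G/\Delta)^{*(k+1)}$ composed with a $G$-cellular approximation of $(G/\Delta)^{*(k+1)}\hookrightarrow(G/\Delta)^{*\infty}=E_{\mathcal D}G$ lands in the $k$-skeleton because the source is $k$-dimensional, and by universality the composite is $G$-homotopic to $\mu$. For the converse: given a $G$-map $\mu'\colon EG\to E_{\mathcal D}G^{(k)}$, one must supply a $G$-map $E_{\mathcal D}G^{(k)}\to (G/\Delta)^{*(k+1)}$; this exists because $E_{\mathcal D}G^{(k)}$ is a $k$-dimensional $G$-CW complex with isotropy in $\mathcal D$ and, for every $H\in\mathcal D$, the fixed-point set $((G/\Delta)^{*(k+1)})^H=((G/\Delta)^H)^{*(k+1)}$ is a $(k+1)$-fold join of a nonempty discrete set and hence $(k-1)$-connected, so all obstructions in Bredon cohomology vanish. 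That connectivity argument, not an equality of filtrations, is what makes the two sides meet, and it is also where finiteness (finite-dimensionality of $EG$, or of $BG$) actually earns its keep.
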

Here  $\mathcal D$ is the family of subgroups of $\Gamma\times\Gamma$ obtained from the diagonal  subgroup $\Delta\Gamma$  by conjugation and finite intersections, and containing the trivial subgroup. The space $E_{\mathcal D}(\Gamma\times\Gamma)$ is a classifying space for $(\Gamma\times\Gamma)$-actions with isotropy groups in $\mathcal D$.
\subsection{Probability measures with finite supports}
Let $\mathcal B_n(X)$ denote the space of probability measures on $X$ supported on at most $n$ points.
Although this is often denoted $P_n(X)$, we use $\mathcal B_n(X)$ to avoid confusion with path spaces.

The standard topology on $\mathcal B_n(X)$ is the quotient topology induced from the symmetric join product~\cite{KK},\cite{V}
 $$Symm^{\ast n}(Z):=\ast^nZ/S_n$$ which is the orbit space of the action of symmetric group $S_n$ on the iterated join product $\ast^nZ=Z\ast\cdots\ast Z$. 
 Elements of $Symm^{\ast n}(Z)$ can be viewed as formal sums $t_1x_1+\cdots t_nx_n$ with no ordering. 
 The quotient map $q:Symm^{\ast n}(X)\to \mathcal B_n(Z)$  identifies terms via $tx+t'x=(t+t')x$. We note that $q$ is a map with compact contractible fibers. 

When $X$ is a metric space, there are choices of metrics on $\mathcal B_n(X)$ like Kantorovich--Rubinshtein metric, Wasserstein metric, or
Levy--Prokhorov metric which generally produce coarser topologies. In robotics $X$ is the space of states and usaually it is metrizable. Then it makes sense to use
one of the natural metrics on $\mathcal B_n(X)$ extending the metric on $X$. For general groups $B\Gamma$ is not metrizable. In that case we will assume the quotient
topology as it was introduced in~\cite{KW1}. In this paper we will continue to call our invariants  ``distributional" instead of ``analog".

Clearly $\mathcal B_n(-)$ is a covariant functor on the category of topological spaces. 
Thus, for a continuous map $f:X\to Y$ we have the push-forward map $$\mathcal B_n(f):\mathcal B_n(X)\to\mathcal B_n(Y)$$
which is continuous.
There is a natural map $i_X:X\to\mathcal B_n(X)$ defined by the Dirac measures $x\mapsto \delta_x$ which is an embedding for reasonable topological spaces $X$. 
\begin{defin}
We say that a map $f:X\to Y$ admits a $\mathcal B_n$-section if there exists a map $\mu:Y\to\mathcal B_n(X)$ such that $\mathcal B_n(f)\circ\mu=i_Y$.
\end{defin}
We denote by $$M_n(X,f)=\mathcal B_n(f)^{-1}(Y)\ \ \ \text{and by} \ \ \ M_n(f):M_n(X,f)\to Y$$ the restriction of $\mathcal B_n(f)$ to $M_n(X,f)$.
If $f$ is a fibration with a fiber $F$, then $M_n(f)$ is a fibration with fiber $\mathcal B_n(F)$~\cite{DJ,KW1}.
Thus, a fibration $f$ admits a $\mathcal B_n$-section if and only $M_n(f)$ admits a section.

\subsection {$d\cat$ and $\dTC$ for spaces}
We refer to the original definitions of distributional category and distributional topological complexity to~\cite{DJ,KW1}.
In this paper we will be using equivalent definitions formulated  below. 

Let $P(X)=Map([0,1],X)$ be the space of paths on $X$ and let $P_0(X)$ be the subspace of paths issued from a base point $x_0\in X$.
Let $p:P(X)\to X\times X$ be the end-points map and $p_0:P_0(X)\to X$ the restriction of $p$. We note that $p$ and $p_0$ are Hurewicz fibration with the fiber homotopy equivalent to the loop space $\Omega X$.

\begin{defin}\label{dcat}
$d\cat X\le n$ $\Leftrightarrow$ the fibration $p_0:P_0(X)\to X$ admits a $\mathcal B_{n+1}$-section.

$\dTC X\le n$ $\Leftrightarrow$ the fibration $p:P(X)\to X\times X$ admits a $\mathcal B_{n+1}$-section.
\end{defin}
We denote by $E_n(X)=M_{n+1}(X,p_0),\ \ \ p_n=M_{n+1}(p_0)$ and  by 

$F_n(X)=M_{n+1}(X\times X,\bar p),\ \ \ \bar p_n=M_{n+1}(p)$. 
Then Definition~\ref{dcat} 
can be reformulated as follows:

$d\cat X\le n$ $\Leftrightarrow$ the fibration $p_n:E_n(X)\to X$ admits a section.

$\dTC X\le n$ $\Leftrightarrow$ the fibration $\bar p_n:F_n(X)\to X\times X$ admits a section.

\

\subsection {Characterization of $d\cat$ and $\dTC$ for groups}

Since for a group $\Gamma$ the loop space $\Omega(B\Gamma)$ is homotopy equivalent to $\Gamma$, the above characterizations take the following form.

Let $p_\Gamma:E\Gamma\to B\Gamma$ be the universal covering. 
\begin{thm}\label{dcat for group}\cite{Dr2,KW1,KW2}
$d\cat \Gamma\le n$ $\Leftrightarrow$ the universal covering $p_\Gamma:E\Gamma\to B\Gamma$ admits a $\mathcal B_{n+1}$-section.
\end{thm}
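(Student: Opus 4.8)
The plan is to read off the statement from \theoref{dcat} applied to the space $X=B\Gamma$ --- recall that $d\cat\Gamma$ means $d\cat(B\Gamma)$ --- by transferring $\mathcal B_{n+1}$-sections between the path fibration $p_0\colon P_0(B\Gamma)\to B\Gamma$ and the universal covering $p_\Gamma\colon E\Gamma\to B\Gamma$. Fix a CW model for $B\Gamma$, so that $E\Gamma$ is contractible; let $x_0\in B\Gamma$ be the base point used to define $P_0(B\Gamma)$, and choose a lift $e_0\in E\Gamma$ of $x_0$. The only geometric input is the construction of a map $\phi\colon E\Gamma\to P_0(B\Gamma)$ over $B\Gamma$, i.e.\ with $p_0\circ\phi=p_\Gamma$, and a map $\psi\colon P_0(B\Gamma)\to E\Gamma$ over $B\Gamma$, i.e.\ with $p_\Gamma\circ\psi=p_0$. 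For $\phi$, fix a contraction $h\colon E\Gamma\times[0,1]\to E\Gamma$ with $h_0=\id$ and $h_1\equiv e_0$ and put $\phi(x)(t)=p_\Gamma(h(x,1-t))$; this is a path in $B\Gamma$ from $x_0$ to $p_\Gamma(x)$, so by the standard adjunction for the compact-open topology $\phi$ is a continuous map into $P_0(B\Gamma)$ with $p_0\circ\phi=p_\Gamma$. For $\psi$, send a path $\gamma\in P_0(B\Gamma)$ to $\widetilde\gamma(1)$, where $\widetilde\gamma$ is the unique lift of $\gamma$ through the covering $p_\Gamma$ with $\widetilde\gamma(0)=e_0$; unique path lifting makes this continuous, and plainly $p_\Gamma\circ\psi=p_0$. (These maps in fact make $p_0$ and $p_\Gamma$ fiber homotopy equivalent, but only the identities $p_0\circ\phi=p_\Gamma$ and $p_\Gamma\circ\psi=p_0$ will be used.)

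Given $\phi$ and $\psi$, the two implications follow formally by applying the functor $\mathcal B_{n+1}$. If $d\cat\Gamma\le n$, then by \theoref{dcat} the map $p_0$ admits a $\mathcal B_{n+1}$-section $s\colon B\Gamma\to\mathcal B_{n+1}(P_0(B\Gamma))$, i.e.\ $\mathcal B_{n+1}(p_0)\circ s=i_{B\Gamma}$; then $\mathcal B_{n+1}(\psi)\circ s\colon B\Gamma\to\mathcal B_{n+1}(E\Gamma)$ satisfies $\mathcal B_{n+1}(p_\Gamma)\circ\bigl(\mathcal B_{n+1}(\psi)\circ s\bigr)=\mathcal B_{n+1}(p_\Gamma\circ\psi)\circ s=\mathcal B_{n+1}(p_0)\circ s=i_{B\Gamma}$, so $p_\Gamma$ admits a $\mathcal B_{n+1}$-section. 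Conversely, if $p_\Gamma$ admits a $\mathcal B_{n+1}$-section $\mu\colon B\Gamma\to\mathcal B_{n+1}(E\Gamma)$, then $\mathcal B_{n+1}(\phi)\circ\mu$ satisfies $\mathcal B_{n+1}(p_0)\circ\bigl(\mathcal B_{n+1}(\phi)\circ\mu\bigr)=\mathcal B_{n+1}(p_0\circ\phi)\circ\mu=\mathcal B_{n+1}(p_\Gamma)\circ\mu=i_{B\Gamma}$, so $p_0$ admits a $\mathcal B_{n+1}$-section, and \theoref{dcat} gives $d\cat\Gamma=d\cat(B\Gamma)\le n$.

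I do not expect a genuine obstacle here: the content is the elementary observation that the contractible total spaces $E\Gamma$ and $P_0(B\Gamma)$ map into one another over $B\Gamma$, after which everything is bookkeeping with the functor $\mathcal B_{n+1}$, using only the strict functoriality $\mathcal B_{n+1}(g\circ f)=\mathcal B_{n+1}(g)\circ\mathcal B_{n+1}(f)$. In particular, because $\phi$ and $\psi$ commute on the nose with the projections to $B\Gamma$, one needs neither the homotopy-invariance of $\mathcal B_{n+1}$ nor Dold's fiber homotopy criterion; the only routine verifications are the continuity of $\phi$ and $\psi$, which are the usual adjunction for path spaces and unique path lifting for the covering $p_\Gamma$.
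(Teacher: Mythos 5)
The paper does not prove \theoref{dcat for group}; it states it with citations to \cite{Dr2}, \cite{KW1}, \cite{KW2}, so there is no in-text proof to compare against. The only hint the paper gives of the intended reasoning is the preceding sentence, ``Since for a group $\Gamma$ the loop space $\Omega(B\Gamma)$ is homotopy equivalent to $\Gamma$, the above characterizations turn into the following,'' i.e.\ one should pass from the path fibration (fiber $\Omega B\Gamma$) to the universal covering (fiber $\Gamma$).

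Your argument is a correct and clean way to make that passage precise, and it is essentially the expected one. The two maps $\phi\colon E\Gamma\to P_0(B\Gamma)$ and $\psi\colon P_0(B\Gamma)\to E\Gamma$ are well defined and continuous for the reasons you state (adjunction for $\phi$, continuity of the covering-space lifting map for $\psi$), and the two strict identities $p_0\circ\phi=p_\Gamma$ and $p_\Gamma\circ\psi=p_0$ are exactly what the functoriality of $\mathcal B_{n+1}$ needs to carry a $\mathcal B_{n+1}$-section of one map to a $\mathcal B_{n+1}$-section of the other. One nice feature of your write-up is the explicit remark that strictness of these identities lets you avoid invoking any fiber-homotopy-equivalence machinery or homotopy-invariance of $\mathcal B_{n+1}$: the paper's hint (``$\Omega B\Gamma\simeq\Gamma$'') suggests a fiber-homotopy argument, whereas you get by with bare functoriality. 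I see no gap; the only implicit hypothesis is that $B\Gamma$ is a (pointed, path-connected) CW model so that \theoref{dcat} applies and $E\Gamma$ is a genuine covering space with unique path lifting, which is the standing convention in the paper.
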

We use notation $E_n\Gamma=M_{n+1}(E\Gamma, p_\Gamma)=\mathcal B_{n+1}(\Gamma)\times_\Gamma E\Gamma$ and $p_n^\Gamma=M_{n+1}(p_\Gamma)$ for the projection
$\mathcal B_{n+1}(\Gamma)\times_\Gamma E\Gamma\to B\Gamma$ in the Borel construction.
Then Theorem~\ref{dcat for group} can be restated as

\

{\em $d\cat \Gamma\le n$ $\Leftrightarrow$ the fibration $p_n^\Gamma:E_n\Gamma\to B\Gamma$ admits a section.}

\

We note that $M_{n+1}(p_\Gamma)$ is a locally trivial bundle with fiber $\mathcal B_{n+1}(\Gamma)=\Delta(\Gamma)^{(n)}$,
homeomorphic to the $n$-skeleton of the simplex $\Delta(\Gamma)$ spanned by $\Gamma$.

\

Let $p_D:D\Gamma\to B(\Gamma\times\Gamma)$ be a covering map corresponding to the diagonal subgroup in $D\Gamma\subset\Gamma\times\Gamma$.
Note that $D\Gamma$ is a model for $B\Gamma$.
\begin{thm}\label{dTC for group}\cite{Dr2,KW1,KW2}
$\dTC(\Gamma)\le n$ $\Leftrightarrow$ the covering map $p_D:D\Gamma\to B(\Gamma\times\Gamma)$ admits a $\mathcal B_{n+1}$-section.
\end{thm}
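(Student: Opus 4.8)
The plan is to deduce the statement from the space-level characterization \theoref{dTC} applied to $X=B\Gamma$. By definition $\dTC(\Gamma)=\dTC(B\Gamma)$; fix a CW model for $B\Gamma$ and take $B\Gamma\times B\Gamma$ as the model for $B(\Gamma\times\Gamma)$. Then \theoref{dTC} says that $\dTC(\Gamma)\le n$ if and only if the endpoint fibration $p\colon P(B\Gamma)\to B\Gamma\times B\Gamma$ admits a $\mathcal{B}_{n+1}$-section. So it is enough to prove that $p$ and the covering $p_D\colon D\Gamma\to B(\Gamma\times\Gamma)$ admit $\mathcal{B}_{n+1}$-sections simultaneously, and this follows once one knows that $p$ and $p_D$ are fiber homotopy equivalent over $B(\Gamma\times\Gamma)$.

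To establish that fiber homotopy equivalence, observe that both total spaces are aspherical and the two projections induce the same homomorphism on $\pi_1$. The constant-path inclusion $c\colon B\Gamma\to P(B\Gamma)$ is a homotopy equivalence with $p\circ c=\Delta$, the diagonal $B\Gamma\to B\Gamma\times B\Gamma$; hence $\pi_1(P(B\Gamma))\cong\Gamma$ and $p$ induces on $\pi_1$ the diagonal inclusion $\Gamma\hookrightarrow\Gamma\times\Gamma$ onto the diagonal subgroup $\Delta\Gamma$. On the other hand $D\Gamma=(E\Gamma\times E\Gamma)/\Delta\Gamma$ is a $K(\Gamma,1)$, and by construction the covering projection $p_D$ induces on $\pi_1$ the same inclusion $\Delta\Gamma\hookrightarrow\Gamma\times\Gamma$. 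Since homotopy classes of maps between aspherical CW complexes are determined by their effect on $\pi_1$, there is a homotopy equivalence $h\colon D\Gamma\to P(B\Gamma)$ with $p\circ h\simeq p_D$, which we may homotope, using that $p$ is a fibration, to a map that is still a homotopy equivalence and satisfies $p\circ h=p_D$ strictly. Both $p$ (an endpoint fibration) and $p_D$ (a covering) are Hurewicz fibrations, so by Dold's theorem $h$ is a fiber homotopy equivalence over $B(\Gamma\times\Gamma)$; in particular we obtain fiberwise maps $\psi\colon D\Gamma\to P(B\Gamma)$ (one may take $\psi=h$) and $\phi\colon P(B\Gamma)\to D\Gamma$ with $p\circ\psi=p_D$ and $p_D\circ\phi=p$.

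Finally, existence of a $\mathcal{B}_{n+1}$-section passes along such fiberwise maps by functoriality of $\mathcal{B}_{n+1}$. If $\mu\colon B\Gamma\times B\Gamma\to\mathcal{B}_{n+1}(P(B\Gamma))$ satisfies $\mathcal{B}_{n+1}(p)\circ\mu=i$, with $i$ the Dirac embedding of $B(\Gamma\times\Gamma)$, then $\mathcal{B}_{n+1}(\phi)\circ\mu$ is a $\mathcal{B}_{n+1}$-section of $p_D$, since $\mathcal{B}_{n+1}(p_D)\circ\mathcal{B}_{n+1}(\phi)\circ\mu=\mathcal{B}_{n+1}(p_D\circ\phi)\circ\mu=\mathcal{B}_{n+1}(p)\circ\mu=i$; symmetrically, $\psi$ carries a $\mathcal{B}_{n+1}$-section of $p_D$ to one of $p$. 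Together with the first paragraph this proves the theorem. The only step that is not purely formal is the middle one, the identification of the endpoint fibration over $B(\Gamma\times\Gamma)$ with the covering $p_D$ up to fiber homotopy equivalence; but even that is routine once one notices that both total spaces are $K(\Gamma,1)$'s whose structure maps realize the diagonal inclusion on fundamental groups, the remaining work being bookkeeping with the functor $\mathcal{B}_{n+1}$ and \theoref{dTC}.
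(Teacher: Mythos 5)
Your argument is correct and is essentially the route the paper has in mind: the paper introduces the theorem immediately after remarking that $\Omega(B\Gamma)\simeq\Gamma$, and your proof just makes that remark precise by exhibiting the end-point fibration $p:P(B\Gamma)\to B\Gamma\times B\Gamma$ and the covering $p_D:D\Gamma\to B(\Gamma\times\Gamma)$ as fiber homotopy equivalent (both total spaces are $K(\Gamma,1)$'s realizing the diagonal inclusion $\Delta\Gamma\hookrightarrow\Gamma\times\Gamma$, and Dold's theorem upgrades the resulting equivalence to a fiberwise one), after which $\mathcal B_{n+1}$-sections transfer by functoriality exactly as you say.
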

The fiber of $p_D$ is in natural bijection with the cosets $$(\Gamma\times\Gamma)/\Gamma=\{(x\times 1)D\Gamma\mid x\in\Gamma\}$$ which are in natural bijection with $\Gamma$.
The action of $\Gamma\times\Gamma$ on $\{(x\times 1)D\Gamma\mid x\in\Gamma\}$ is given by the formula
$$
(a\times b)(x\times 1)D\Gamma=(ax\times b)D\Gamma=(axb^{-1}\times 1)D\Gamma.
$$
Thus,  $M_{n+1}(p_D)$ is a locally trivial bundle with fiber $\Delta(\Gamma)^{(n)}$ and with an action of $\Gamma\times\Gamma$ on the vertices of $\Delta(\Gamma)$
given by  the formula $(a\times b)(x)=axb^{-1}$. This yields the following characterization of $\dTC$.
\begin{thm}\label{better dTC}\cite{KW1},\cite{KW2}
$\dTC(\Gamma)\le n$ if and only if the fibration $$\Delta(\Gamma)^{(n)}\times_{(\Gamma\times\Gamma)}E(\Gamma\times\Gamma)\to B(\Gamma\times\Gamma)$$
admits a section.
\end{thm}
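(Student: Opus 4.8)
The plan is to derive this as a reformulation of Theorem~\ref{dTC for group} in the language of Borel constructions, using only facts already recorded in the preliminaries. Recall that for a fibration $f$ the existence of a $\mathcal{B}_{n+1}$-section is equivalent to the existence of an ordinary section of $M_{n+1}(f)$; since a covering is a Hurewicz fibration, this applies to $p_D\colon D\Gamma\to B(\Gamma\times\Gamma)$. So, by Theorem~\ref{dTC for group}, it suffices to identify the fibration $M_{n+1}(p_D)\colon M_{n+1}(D\Gamma,p_D)\to B(\Gamma\times\Gamma)$ with $\Delta(\Gamma)^{(n)}\times_{(\Gamma\times\Gamma)}E(\Gamma\times\Gamma)\to B(\Gamma\times\Gamma)$.

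First I would write $p_D$ as a Borel construction: the covering space of $B(\Gamma\times\Gamma)$ corresponding to the subgroup $D\Gamma\le\Gamma\times\Gamma$ is $D\Gamma\cong F\times_{(\Gamma\times\Gamma)}E(\Gamma\times\Gamma)$, where $F=(\Gamma\times\Gamma)/D\Gamma$ is the discrete $(\Gamma\times\Gamma)$-set of cosets; under the bijection $F\cong\Gamma$, $(x\times1)D\Gamma\mapsto x$, the left $(\Gamma\times\Gamma)$-action on $F$ is $(a\times b)\cdot x=axb^{-1}$, exactly as computed just before the statement. Thus $p_D$ is the locally trivial bundle over $B(\Gamma\times\Gamma)$ with discrete fiber $\Gamma$ and monodromy given by this formula. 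Since $\mathcal{B}_{n+1}(-)$ is a covariant functor on topological spaces, applying it fiberwise carries this trivializing data — and the $(\Gamma\times\Gamma)$-action, whose generators act on the discrete fiber $F$ by homeomorphisms — to trivializing data for $M_{n+1}(p_D)$; in other words $M_{n+1}(p_D)=\mathcal{B}_{n+1}(\Gamma)\times_{(\Gamma\times\Gamma)}E(\Gamma\times\Gamma)\to B(\Gamma\times\Gamma)$. For the discrete set $\Gamma$, a probability measure supported on at most $n+1$ of its points is precisely a point of the $n$-skeleton $\Delta(\Gamma)^{(n)}$ of the simplex on the vertex set $\Gamma$, and this identifies $\mathcal{B}_{n+1}(\Gamma)$ with $\Delta(\Gamma)^{(n)}$ as a topological space (the homeomorphism recorded after Theorem~\ref{groupdcat'}); the push-forward $(\Gamma\times\Gamma)$-action is the simplicial action permuting vertices via $(a\times b)(x)=axb^{-1}$. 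Substituting, $M_{n+1}(p_D)\cong\Delta(\Gamma)^{(n)}\times_{(\Gamma\times\Gamma)}E(\Gamma\times\Gamma)\to B(\Gamma\times\Gamma)$.

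Putting the pieces together: $\dTC(\Gamma)\le n$ iff $p_D$ admits a $\mathcal{B}_{n+1}$-section (Theorem~\ref{dTC for group}), iff $M_{n+1}(p_D)$ admits a section, iff $\Delta(\Gamma)^{(n)}\times_{(\Gamma\times\Gamma)}E(\Gamma\times\Gamma)\to B(\Gamma\times\Gamma)$ admits a section. The only point requiring real care is the middle identification — that forming $M_{n+1}$ commutes with the Borel construction, i.e.\ that the functor $\mathcal{B}_{n+1}$ sends the trivializing charts, transition maps, and equivariant structure of the covering $p_D$ to those of the named associated bundle. Because the fiber $F$ is discrete this is essentially formal bookkeeping, the only genuine topological input being the homeomorphism $\mathcal{B}_{n+1}(\Gamma)\cong\Delta(\Gamma)^{(n)}$ already stated in the preliminaries; so I do not anticipate a serious obstacle, and indeed the excerpt's own discussion preceding the statement has already carried out the fiber-level computation.
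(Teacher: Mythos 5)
Your proposal is correct and follows essentially the same route as the paper: the paper's ``proof'' is precisely the discussion preceding the statement, which identifies the fiber of $p_D$ with the coset set $(\Gamma\times\Gamma)/D\Gamma\cong\Gamma$ carrying the action $(a\times b)(x)=axb^{-1}$, observes that applying $\mathcal B_{n+1}$ fiberwise produces the associated bundle with fiber $\mathcal B_{n+1}(\Gamma)=\Delta(\Gamma)^{(n)}$, and then invokes the equivalence between $\mathcal B_{n+1}$-sections of a fibration and ordinary sections of $M_{n+1}$ together with Theorem~\ref{dTC for group}. You have made this logical chain explicit; nothing more is needed.
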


\begin{cor}\label{charact}
For a torsion free group, $\dTC(\Gamma)$  equals the minimal $k$ such that there exists a $(\Gamma\times\Gamma)$-equivariant map $E(\Gamma\times\Gamma)\to\Delta(\Gamma)^{(k)}$.
\end{cor}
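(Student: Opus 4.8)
The plan is to obtain the statement directly from \theoref{better dTC} by translating ``section of a Borel fibration'' into ``equivariant map out of a universal space''. By \theoref{better dTC}, $\dTC(\Gamma)\le n$ if and only if the fibration
$$\Delta(\Gamma)^{(n)}\times_{(\Gamma\times\Gamma)}E(\Gamma\times\Gamma)\to B(\Gamma\times\Gamma)$$
admits a section, where $\Gamma\times\Gamma$ acts on the vertex set $\Gamma$ of $\Delta(\Gamma)$ by $(a\times b)(x)=axb^{-1}$. Since $\Gamma$ is torsion free, $\Gamma\times\Gamma$ is torsion free and acts freely on the contractible complex $E(\Gamma\times\Gamma)$, so $E(\Gamma\times\Gamma)\to B(\Gamma\times\Gamma)$ is a principal $(\Gamma\times\Gamma)$-bundle; this freeness is the only property of $E(\Gamma\times\Gamma)$ I will use.

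The core step is the standard bijection: for any $(\Gamma\times\Gamma)$-CW complex $F$, sections of the associated bundle $F\times_{(\Gamma\times\Gamma)}E(\Gamma\times\Gamma)\to B(\Gamma\times\Gamma)$ correspond to $(\Gamma\times\Gamma)$-equivariant maps $E(\Gamma\times\Gamma)\to F$. In one direction, an equivariant map $\varphi$ yields the section $[e]\mapsto[\varphi(e),e]$, which is well defined precisely because $\varphi$ is equivariant. Conversely, given a section $s$, I would pull it back along $E(\Gamma\times\Gamma)\to B(\Gamma\times\Gamma)$; the resulting bundle over $E(\Gamma\times\Gamma)$ is canonically $(\Gamma\times\Gamma)$-equivariantly isomorphic to $F\times E(\Gamma\times\Gamma)$ with the diagonal action, using the free $(\Gamma\times\Gamma)$-action, and composing the pulled-back section with the projection to $F$ produces an equivariant map. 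I would then check that these two constructions are mutually inverse and, in particular, that the map coming from $s$ is genuinely $(\Gamma\times\Gamma)$-equivariant.

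Applying this with $F=\Delta(\Gamma)^{(n)}$ gives: $\dTC(\Gamma)\le n$ if and only if there is a $(\Gamma\times\Gamma)$-equivariant map $E(\Gamma\times\Gamma)\to\Delta(\Gamma)^{(n)}$. Because $\Delta(\Gamma)^{(k)}\subset\Delta(\Gamma)^{(k+1)}$, post-composing with the inclusion shows that the set of integers $k$ admitting such a map is upward closed (and empty exactly when $\dTC(\Gamma)=\infty$); hence its minimum equals $\dTC(\Gamma)$, which is the assertion. I expect the only point requiring real care to be the equivariance in the section-to-map half of the dictionary, where one must track the free $(\Gamma\times\Gamma)$-action through the canonical trivialization of the pulled-back bundle; the remainder is bookkeeping.
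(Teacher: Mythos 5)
Your proposal is correct and is essentially the paper's own (implicit) argument: the corollary is stated without separate proof immediately after Theorem~\ref{better dTC}, relying on exactly the standard dictionary between sections of a Borel construction $F\times_{G}EG\to BG$ and $G$-equivariant maps $EG\to F$, which you spell out in detail. One small slip: you attribute the freeness of the $(\Gamma\times\Gamma)$-action on $E(\Gamma\times\Gamma)$ to the torsion-freeness of $\Gamma$, but that freeness holds for any group by the very definition of $EG$; indeed the torsion-freeness hypothesis plays no mathematical role in this particular corollary and is included only to match the setting in which the author subsequently applies it.
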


We use the notations $$F_n\Gamma=M_{n+1}(D\Gamma,p_D)=\Delta(\Gamma)^{(n)}\times_{(\Gamma\times\Gamma)}E(\Gamma\times\Gamma)$$ and $\bar p^\Gamma_n=M_{n+1}(p_D)$. Then Theorem~\ref{dTC for group} states that

\

{\em $\dTC(\Gamma)\le n$ $\Leftrightarrow$ the  map $\bar p_n^\Gamma:F_n\Gamma\to B(\Gamma\times\Gamma)$ admits a section.}

\subsection{$d\cat$ and $\dTC$ of maps}
We define  $d\cat$ and $d\TC$ for  maps as follows. {\em The distributional topological complexity}, $d\TC(f)$, of a map $f:X\to Y$ is the minimal number $n$ such that there exists a continuous map
$$s:X\times X\to\mathcal B_{n+1}(P(Y))$$ satisfying $s(x,x')\in\mathcal B_{n+1}(P(f(x),f(x')))$ for all $(x,x')\in X\times X$.
Here $$P(y,y')=\{f\in P(Y)\mid f(0)=y,\ f(1)=y'\},$$

{\em The distributional LS-category}, $d\cat(f)$,  is the minimal number $n$ such that there is a continuous map $$s:X\to \mathcal B_{n+1}(P(Y))$$ satisfying $s(x)\in \mathcal B_{n+1}(P(f(x),y_0))$ for all $x \in X$. We note that $$d\cat(1_X)=d\cat(X)\ \ \text{and}\ \ d\TC(1_X)=d\TC(X).$$ Clearly, for $f:X\to Y$,
\begin{equation}\label{dcat of maps}
d\cat(f)\le\min\{d\cat X, d\cat Y\}\  \text{and} \ d\TC(f)\le\min\{d\TC(X),d\TC(Y)\}.
\end{equation}

The proof of Theorem 2.6 and Theorem 2.7 in~\cite{DJ} can be extended to the following.
\begin{prop}\label{liftD}
Let $f:X\to Y$ be a map. Then

(a)   $d\cat(f)\le n$ if and only if $f$ admits a lift with respect to $p_n:E_nY\to Y$;

(b) $d\TC(f)\le n$ if and only if $f\times f$  admits a lift with respect to $\bar p_n:F_nY\to Y\times Y$.
\end{prop}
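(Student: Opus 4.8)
The plan is to imitate the proofs of Theorems 2.6 and 2.7 of~\cite{DJ}, which handle the case $f=1_X$, and observe that nothing in those arguments uses the identity hypothesis in an essential way; only the fibration property of $p_0^Y$ and $\bar p^Y$ is needed. I will treat part (b); part (a) is the same argument with $X\times X$ replaced by $X$, $f\times f$ by $f$, the endpoint fibration $\bar p^Y$ by $p_0^Y$, and the target path set $P(y,y')$ by $P(y,y_0)$.

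First, for the forward direction of (b), suppose $d\TC(f)\le n$, so there is a continuous $s:X\times X\to\mathcal B_{n+1}(P(Y))$ with $s(x,x')\in\mathcal B_{n+1}(P(f(x),f(x')))$ for all $(x,x')$. I claim this $s$ is exactly a lift of $f\times f$ through $\mathcal B_{n+1}(\bar p^Y):\mathcal B_{n+1}(P(Y))\to\mathcal B_{n+1}(Y\times Y)$. Indeed, the condition that $s(x,x')$ is supported on paths from $f(x)$ to $f(x')$ says precisely that the pushforward $\mathcal B_{n+1}(\bar p^Y)\circ s$ sends $(x,x')$ to the Dirac measure $\delta_{(f(x),f(x'))}=i_{Y\times Y}(f(x),f(x'))$, i.e.\ $\mathcal B_{n+1}(\bar p^Y)\circ s = i_{Y\times Y}\circ(f\times f)$. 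So $s$ is a lift of $f\times f$ to $M_{n+1}(P(Y),\bar p^Y)$ in the sense of a map over $Y\times Y$, which is what "admits a lift with respect to $\mathcal B_{n+1}(\bar p^Y)$" means. Conversely, given such a lift $\tilde s:X\times X\to\mathcal B_{n+1}(P(Y))$ with $\mathcal B_{n+1}(\bar p^Y)\circ\tilde s=i_{Y\times Y}\circ(f\times f)$, the same computation read backwards shows $\tilde s(x,x')$ is supported on $P(f(x),f(x'))$, so $\tilde s$ witnesses $d\TC(f)\le n$. The only subtlety, and the step I expect to need the most care, is the identification of the topology: one must check that the map $s$ produced from the definition is continuous as a map into $\mathcal B_{n+1}(P(Y))$ with its quotient topology, and that $\mathcal B_{n+1}(\bar p^Y)$ is continuous; both follow from functoriality of $\mathcal B_{n+1}(-)$ on continuous maps and the fact that $i_{Y\times Y}$ is an embedding for the spaces in question, exactly as recalled in the preliminaries.

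For the reverse direction there is really nothing more to do: the two displayed equivalences above are literal restatements of each other once one unwinds the definition of $M_{n+1}$, of $i_{Y\times Y}$, and of the support condition $s(x,x')\in\mathcal B_{n+1}(P(f(x),f(x')))$. Thus the proposition is a bookkeeping extension of the special case $f=1_X$, and the proof will consist of making the two translations (support condition $\leftrightarrow$ pushforward-equals-Dirac, and continuity in the quotient topology) explicit, then quoting the fibration statements for $p_0^Y$ and $\bar p^Y$ from the preliminaries to phrase the conclusion in terms of sections when desired.
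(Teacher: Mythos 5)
Your argument is correct and is essentially the proof the paper has in mind: the paper itself offers only the one-line remark that the proofs of Theorems 2.6 and 2.7 of [DJ] extend, and you have carried out exactly that extension by unwinding the definitions — the support condition $s(x,x')\in\mathcal B_{n+1}(P(f(x),f(x')))$ is precisely the statement that $\mathcal B_{n+1}(\bar p^Y)\circ s$ is the Dirac section $i_{Y\times Y}\circ(f\times f)$, which is the lifting condition.
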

Since the existence of a lift of $f$ in Proposition~\ref{liftD}(a) is equivalent to existence of a $\mathcal B_{n+1}$-section of the pull-back of $p_0^Y$ with respect to $f$,
in the case when $Y=B\Gamma$ and $p^Y$ is the universal covering we obtain the following
\begin{cor}\label{Lift}
Let $p':u^*E\Gamma\to Y$ be the pull-back of the universal covering $p_\Gamma:E\Gamma\to B\Gamma$ with respect to a map $u:Y\to B\Gamma$.
Then $d\cat(u)\le n$ if and only if $p'$ admits a $\mathcal B_{n+1}$-section.
\end{cor}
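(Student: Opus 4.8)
The plan is to chase the lifting condition of Proposition~\ref{liftD}(a) through the standard dictionary among lifts, pull-backs and $\mathcal B_{n+1}$-sections, using that over $B\Gamma$ the based path fibration is a model for the universal covering. Applying Proposition~\ref{liftD}(a) with $u:Y\to B\Gamma$ in the role of $f$ (so its target plays the role of the ambient space), $d\cat(u)\le n$ holds if and only if $u$ admits a lift with respect to $\mathcal B_{n+1}(p_0^{B\Gamma})$, where $p_0^{B\Gamma}:P_0(B\Gamma)\to B\Gamma$ is the based path fibration. Any lift of $i_{B\Gamma}\circ u$ automatically takes values in $\mathcal B_{n+1}(p_0^{B\Gamma})^{-1}\bigl(i_{B\Gamma}(B\Gamma)\bigr)=M_{n+1}(P_0(B\Gamma),p_0^{B\Gamma})$, so the condition is equivalent to saying that $u$ lifts through the fibration $M_{n+1}(p_0^{B\Gamma}):M_{n+1}(P_0(B\Gamma),p_0^{B\Gamma})\to B\Gamma$, and hence, by the universal property of pull-backs, to the pull-back $u^*M_{n+1}(P_0(B\Gamma),p_0^{B\Gamma})\to Y$ admitting a section.

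The main step is to replace $M_{n+1}(p_0^{B\Gamma})$ by $p_n^\Gamma:E_n\Gamma\to B\Gamma$. Since $B\Gamma$ is a $K(\Gamma,1)$, the total space $P_0(B\Gamma)$ is contractible, and the endpoint map $p_0^{B\Gamma}$ lifts to a map $P_0(B\Gamma)\to E\Gamma$ over $B\Gamma$ (the fibre $\Gamma$ of $p_\Gamma$ being discrete and $P_0(B\Gamma)$ simply connected, there is no obstruction); the fibres of both fibrations being homotopy equivalent to the discrete group $\Gamma$, this lift is a fibre homotopy equivalence over $B\Gamma$ by Dold's theorem. Applying the functor $\mathcal B_{n+1}(-)$ fibrewise---it is functorial and preserves homotopies between maps---turns this into a fibre homotopy equivalence $M_{n+1}(p_0^{B\Gamma})\simeq_{B\Gamma}M_{n+1}(p_\Gamma)=E_n\Gamma\to B\Gamma$, and fibre homotopy equivalence over $B\Gamma$ is preserved by pull-back along $u$. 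Therefore $u^*M_{n+1}(P_0(B\Gamma),p_0^{B\Gamma})\to Y$ admits a section if and only if $u^*E_n\Gamma\to Y$ does.

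It then remains to identify $u^*E_n\Gamma$ with $M_{n+1}(u^*E\Gamma,p')$. The construction $M_{n+1}$ commutes with pull-backs: over a point $y\in Y$ both bundles have fibre $\mathcal B_{n+1}\bigl(p_\Gamma^{-1}(u(y))\bigr)=\mathcal B_{n+1}\bigl((p')^{-1}(y)\bigr)$, compatibly with the bundle structures over $Y$, so $u^*E_n\Gamma=u^*M_{n+1}(E\Gamma,p_\Gamma)=M_{n+1}(u^*E\Gamma,p')$. Finally, $p'$ is a covering map and in particular a Hurewicz fibration, so the equivalence recorded in Section~2---that a fibration $f$ admits a $\mathcal B_{n+1}$-section if and only if $M_{n+1}(f)$ admits a section---shows that $M_{n+1}(u^*E\Gamma,p')\to Y$ admits a section exactly when $p'$ admits a $\mathcal B_{n+1}$-section. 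Chaining the equivalences above gives $d\cat(u)\le n$ if and only if $p'$ admits a $\mathcal B_{n+1}$-section. The one genuinely non-formal ingredient---and the step I would expect to require the most care---is the fibre homotopy equivalence between the based path fibration of $B\Gamma$ and the universal covering, together with the fact that it survives the fibrewise application of $\mathcal B_{n+1}$; everything else is bookkeeping with pull-backs and the definitions from Section~2.
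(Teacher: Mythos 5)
Your proof is correct and follows the same route the paper intends: invoke Proposition~\ref{liftD}(a), translate the lift of $u$ through $\mathcal B_{n+1}(p_0^{B\Gamma})$ into a section of the pulled-back $M_{n+1}$-fibration, and replace the based path fibration on $B\Gamma$ by the universal covering $p_\Gamma$ (a substitution the paper already leans on in Section~2.4 when passing from Theorems~\ref{dcat}--\ref{dTC} to their group versions). You have merely made explicit the fibre-homotopy-equivalence step (lifting $p_0^{B\Gamma}$ through the covering and applying Dold's theorem, then pushing through $\mathcal B_{n+1}$ fibrewise) and the commutation of $M_{n+1}$ with pull-backs, both of which the paper treats as understood.
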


\subsection{Notations and conventions}

 Hyperbolic groups in this paper are Gromov's hyperbolic~\cite{Gr}.

We consider actions of a discrete group $G$ on CW-complex $X$ that preserve the CW-complex structure in such a way that if an open cell $e$ is taken  to itself by an element $g\in G$,
then the restrictyion of $g$  to $e$ is the identity map. Such a complex is called a $G$-CW-complex.

\section{$\dTC$ of Frobenius injective groups}

\subsection{Frobenius Injective Groups}
For a group $G$  and  $k\in \mathbb N$ a power map $f:G\to G$, $f(x)=x^k$, is called a {\em Frobenius map}.
We call a group $G$ {\em Frobenius injective} if every  Frobenius map $f:G\to G$ on $G$ is injective.
Thus, Frobenius injective groups are torsion free.

Clearly, a subgroup of Frobenius injective group is Frobenius injective. In particular,
any subgroup of a uniquely divisible group is Frobenius injective.
Examples of uniquely divisible groups include the groups of positive units
of real closed fields, unipotent matrix groups, noncommutative power series with unit constant
term, and the group of characters of a connected Hopf algebra over a field of characteristic
zero~\cite{Ba,ABS,HLR}.

By $Z(a)=\{x\in G\mid xa=ax\}$ we denote the centralizer of an element $a\in G$ in a group $G$.
For a set $S\subset G$ we denote by $Z(S)=\bigcap_{a\in S} Z(a)$.
The following was proven in~\cite{FGLO}, Lemma 8.0.4:
\begin{prop}\label{central}
For torsion free groups $G$ with cyclic centralizers either $Z(a)=Z(b)$ or $Z(a)\cap Z(b)=1$ for all $a,b\in G$.
\end{prop}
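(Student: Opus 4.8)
The plan is to establish the dichotomy directly: assuming $Z(a)\cap Z(b)\ne 1$ for nontrivial $a,b\in G$, I will deduce $Z(a)=Z(b)$. For $a$ or $b$ trivial there is nothing to prove, once the hypothesis ``cyclic centralizers'' is read as a condition on centralizers of nontrivial elements (which is the only sensible reading here, since any cyclic subgroup of the torsion free group $G$ is either trivial or infinite cyclic). Note also that the two alternatives are genuinely exclusive for nontrivial $a,b$, because $a\in Z(a)$ forces $Z(a)\ne 1$.

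First I would choose a nontrivial element $c\in Z(a)\cap Z(b)$. Then $a$ and $b$ both lie in $Z(c)$, which is cyclic by hypothesis and hence, since $G$ is torsion free and $c\ne 1$, is infinite cyclic; write $Z(c)=\langle g\rangle$. Because $a,b\ne 1$ we have $a=g^{m}$ and $b=g^{n}$ with $m,n\ne 0$, so the claim reduces to comparing $Z(g^{m})$, $Z(g^{n})$ and $Z(g)$.

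The key step is the lemma that for an element $g$ of infinite order and any $k\ne 0$ one has $Z(g^{k})=Z(g)$. The inclusion $Z(g)\subseteq Z(g^{k})$ is clear. For the reverse inclusion, $g^{k}\ne 1$, so $Z(g^{k})$ is cyclic by hypothesis, hence infinite cyclic, say $Z(g^{k})=\langle h\rangle$; since $g\in Z(g^{k})$ we get $g=h^{j}$ for some $j\ne 0$, and then every element of $Z(g^{k})=\langle h\rangle$ is a power of $h$ and therefore commutes with $g=h^{j}$, which proves $Z(g^{k})\subseteq Z(g)$. Applying this lemma twice yields $Z(a)=Z(g^{m})=Z(g)=Z(g^{n})=Z(b)$.

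I expect the only real obstacle to be the bookkeeping that keeps every centralizer in play genuinely infinite cyclic — this is precisely where torsion freeness is used, turning ``$Z(g^{k})$ contains $g$'' into ``$Z(g^{k})$ and $g$ are powers of a common element'' — together with the minor point of disposing of the trivial cases. Once the reduction to the single infinite cyclic group $Z(c)=\langle g\rangle$ is in place, the conclusion is essentially forced.
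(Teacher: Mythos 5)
The paper itself does not prove Proposition~\ref{central}; it is quoted directly from \cite{FGLO}, Lemma~8.0.4, so there is no in-paper argument to compare against. Your proof is correct. The implicit restriction to nontrivial $a,b$ is indeed the intended reading (for $a=1$ the dichotomy would force $G$ to be cyclic), and once that is granted your structure is the natural one: a nontrivial $c\in Z(a)\cap Z(b)$ places both $a$ and $b$ inside the infinite cyclic group $Z(c)=\langle g\rangle$, and your lemma $Z(g^{k})=Z(g)$ for $k\ne 0$ — proved correctly by observing that a generator $h$ of the infinite cyclic $Z(g^{k})$ satisfies $g=h^{j}$ with $j\ne 0$, so that $g$ is central in $\langle h\rangle=Z(g^{k})$ — collapses $Z(a)=Z(g^{m})$ and $Z(b)=Z(g^{n})$ to $Z(g)$. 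No gaps.
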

It was shown in~\cite{FGLO} that such groups $G$ satisfy the following property:

(N) For any $x\in G$ and for any finite set $S\subset G$ the condition $x^n\in Z(S)$ implies that $x\in Z(S)$.

\begin{prop} 
Every torsion free group satisfying property (N)  is Frobenius injective.
\end{prop}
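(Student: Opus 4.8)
The plan is to deduce Frobenius injectivity from uniqueness of $k$-th roots, which in turn follows from property (N) together with torsion-freeness. Fix $k\in\mathbb N$ and suppose $a^k=b^k$ for some $a,b\in G$; the goal is to conclude $a=b$ (the cases $k=0,1$ being trivial).

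The crucial step is to promote the relation $a^k=b^k$ to the statement that $a$ and $b$ commute. Write $c:=a^k=b^k$. Since $c=b^k$ obviously lies in the centralizer $Z(b)$, I would apply property (N) with $x=a$, with the singleton $S=\{b\}$ (so that $Z(S)=Z(b)$), and with exponent $n=k$: the hypothesis $a^n=c\in Z(S)$ then yields $a\in Z(b)$, i.e.\ $ab=ba$.

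Once commutativity is in hand the argument closes by elementary group theory inside the abelian subgroup $\langle a,b\rangle$: from $ab=ba$ we get $(ab^{-1})^k=a^kb^{-k}=cc^{-1}=1$, and since $G$ is torsion free this forces $ab^{-1}=1$, hence $a=b$. Thus every Frobenius map on $G$ is injective, so $G$ is Frobenius injective. I do not expect a genuine obstacle; the only point needing care is that property (N) is being invoked in a legitimate instance, namely with a finite (one-element) set $S$ and with the exponent matching the degree of the Frobenius map, which it is.
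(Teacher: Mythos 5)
Your proposal is correct and follows essentially the same argument as the paper: note $a^k \in Z(b)$, invoke property (N) to conclude $a \in Z(b)$, then deduce $(ab^{-1})^k = 1$ and finish by torsion-freeness.
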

\begin{proof}
Suppose that $x^k=y^k$. Then $x^k\in Z(y)$. By Property (N), $x\in Z(y)$. Hence, $(xy^{-1})^k=x^ky^{-k}=1$. Since the group is torsion free, we obtain $xy^{-1}=1$ and
thus $x=y$.
\end{proof}
\begin{cor}
Torsion free nilpotent groups are Frobenius injective.
\end{cor}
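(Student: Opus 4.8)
The plan is to deduce the corollary from the preceding proposition by showing that every torsion free nilpotent group satisfies property (N). Since $Z(S)=\bigcap_{a\in S}Z(a)$ and an intersection of subgroups each of which is \emph{isolated} (i.e. $g^n\in H\Rightarrow g\in H$) is again isolated, it suffices to prove the following statement in a torsion free nilpotent group $G$: each centralizer $Z(a)$ is isolated, equivalently, $[x^n,a]=1$ implies $[x,a]=1$. Granting this, if $x^n\in Z(S)$ then $[x^n,a]=1$ for every $a\in S$, hence $[x,a]=1$ for every $a\in S$, hence $x\in Z(S)$; so $G$ has property (N), and the proposition just proved gives Frobenius injectivity.

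To prove that $Z(a)$ is isolated I would first replace $G$ by the finitely generated subgroup $\langle x,a\rangle$, so we may assume $G$ is finitely generated torsion free nilpotent of some class $c$. The right tool is the isolated lower central series $\Gamma_j=\sqrt{\gamma_j(G)}$ (P.\ Hall's theory of isolators in nilpotent groups): each $\Gamma_j$ is normal in $G$, $[\Gamma_i,\Gamma_j]\subseteq\Gamma_{i+j}$, each quotient $\Gamma_j/\Gamma_{j+1}$ is torsion free, and $\Gamma_{c+1}=1$. Put $d=[x,a]\in\gamma_2(G)\subseteq\Gamma_2$. I would show $d\in\Gamma_j$ for all $j$ by induction, the base case $j=2$ being clear. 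For the step, use the commutator identity $[x^n,a]=\bigl({}^{x^{n-1}}d\bigr)\cdots\bigl({}^{x}d\bigr)\,d$ together with ${}^{x^i}d\equiv d\pmod{\Gamma_{j+1}}$, which holds because $[x^i,d]\in[\Gamma_1,\Gamma_j]\subseteq\Gamma_{j+1}$; working in $G/\Gamma_{j+1}$ this gives $1=[x^n,a]\equiv d^n$, so $d^n\in\Gamma_{j+1}$, and torsion freeness of $\Gamma_j/\Gamma_{j+1}$ forces $d\in\Gamma_{j+1}$. At $j=c+1$ we get $d=1$, i.e. $[x,a]=1$, as desired.

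The main obstacle, and the reason for introducing the isolated series, is that the ordinary lower central factors $\gamma_j(G)/\gamma_{j+1}(G)$ of a torsion free nilpotent group can have torsion (already the abelianization can), so the naive induction "$d^n\in\gamma_{j+1}\Rightarrow d\in\gamma_{j+1}$" fails; passing to the isolators $\Gamma_j$ is exactly the fix, at the cost of quoting their basic properties. An alternative route, matching the list of uniquely divisible examples recalled above, is to use the Mal'cev (Jennings) embedding of a finitely generated torsion free nilpotent group into a unipotent matrix group over $\mathbb Z$, hence over $\mathbb R$: that ambient group is uniquely divisible, hence Frobenius injective, hence so is $\langle x,y\rangle$, and since the equation $x^k=y^k$ already lives there one concludes $x=y$. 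In either approach the non–finitely generated case reduces to the finitely generated one, since only finitely many elements of $G$ are involved in the relevant equations.
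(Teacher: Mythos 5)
Both of your routes are correct, and they genuinely differ from what the paper actually writes: the paper's proof of this corollary consists of a single citation, namely that torsion free nilpotent groups satisfy property (N) by \cite{FGLO}, Lemma~8.0.3, after which the preceding proposition finishes the argument. Your first route follows the same strategic path (establish property (N), then apply the proposition), but instead of citing \cite{FGLO} you actually prove it, via P.\ Hall's isolated lower central series. The key points all check out: an intersection of isolated subgroups is isolated, so it suffices to show each $Z(a)$ is isolated; after passing to $\langle x,a\rangle$ (legitimate, since subgroups of torsion free nilpotent groups are torsion free nilpotent), the commutator expansion $[x^n,a]=\bigl({}^{x^{n-1}}d\bigr)\cdots\bigl({}^{x}d\bigr)d$ together with $[\Gamma_1,\Gamma_j]\subseteq\Gamma_{j+1}$ gives $d^n\in\Gamma_{j+1}$, and torsion freeness of the isolated factors pushes $d$ down the series to $\Gamma_{c+1}=1$. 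You also correctly flag why the ordinary lower central series is insufficient. Your second route is more radically different from the paper's stated proof but is already set up by the paper's own remarks: finitely generated torsion free nilpotent groups embed (Mal'cev/Jennings) into unipotent matrix groups over $\mathbb{R}$, which are uniquely divisible, and the paper has already observed that subgroups of uniquely divisible groups are Frobenius injective; the non-f.g.\ case reduces to $\langle x,y\rangle$. What the first route buys is a self-contained proof of the exact statement (N) that the paper outsources; what the second buys is a shorter argument that never mentions centralizers or property (N) at all, and connects the corollary to the list of uniquely divisible examples given just above it. Either is a valid replacement for the paper's one-line citation.
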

It was proven in~\cite{FGLO}, Lemma 8.0.3, that such groups have property (N).

Since torsion free hyperbolic groups have cyclic centralizers (see~\cite{BH} Corollary 3.10 in Chapter III$\Gamma$), we obtain the following:
\begin{cor}
Torsion free hyperbolic groups are Frobenius injective.
\end{cor}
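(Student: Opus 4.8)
The plan is to obtain the statement as an immediate consequence of the chain of implications just assembled in this subsection: the preceding Proposition shows that a torsion free group satisfying property (N) is Frobenius injective, and by~\cite{FGLO} a torsion free group whose nontrivial centralizers are cyclic satisfies property (N). So the only thing I would need to verify is that a torsion free hyperbolic group $\Gamma$ has cyclic centralizers in the sense of \propref{central}, i.e.\ that $Z(a)$ is infinite cyclic for every $a\neq 1$.

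For this I would invoke the standard structure theory of hyperbolic groups. In a hyperbolic group every element $g$ of infinite order is contained in a unique maximal elementary (virtually cyclic) subgroup, and in particular $Z(g)$ is virtually cyclic; see~\cite{Gr} or the standard references on hyperbolic groups. When $\Gamma$ is torsion free, every nontrivial $g$ has infinite order and $Z(g)$, being a torsion free virtually cyclic group, is forced to be infinite cyclic. The centralizer of the identity is all of $\Gamma$ and causes no trouble: in property (N) one may take $S=\varnothing$ (or $S=\{1\}$), for which $Z(S)=\Gamma$, and the implication $x^n\in\Gamma\Rightarrow x\in\Gamma$ holds trivially.

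With cyclic centralizers established, \propref{central} gives the dichotomy $Z(a)=Z(b)$ or $Z(a)\cap Z(b)=1$ for all nontrivial $a,b$; by~\cite{FGLO} this yields property (N); and the preceding Proposition then shows $\Gamma$ is Frobenius injective. The only genuine input here is the geometric fact that centralizers of infinite order elements in hyperbolic groups are virtually cyclic — this is the single step where hyperbolicity is actually used, and I expect it to be the main (though entirely classical) obstacle; everything else is bookkeeping of the kind already carried out for the nilpotent case.
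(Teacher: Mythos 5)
Your proposal is correct and follows essentially the same route the paper takes: the paper simply asserts that torsion free hyperbolic groups have cyclic centralizers and then invokes the chain (cyclic centralizers $\Rightarrow$ property (N) via~\cite{FGLO} $\Rightarrow$ Frobenius injective via the preceding Proposition). You have merely filled in the justification for the asserted fact (centralizers of infinite-order elements in a hyperbolic group are virtually cyclic, and a nontrivial torsion free virtually cyclic group is infinite cyclic), which is exactly the standard input needed.
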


Let $\pi$ be a torsion free group and let $G=\pi\times\pi$. 
The action of $G$ on $\pi$, defined by the formula $(x,y)(\gamma)=x\gamma y^{-1}$, extends to an action on the simplex $\Delta(\pi)$ spanned by $\pi$.
Then the map
$f:\Delta(G)\to\Delta(\pi)$ defined as $$f(\sum t_i(x_i,y_i))=\sum t_i(x_iy_i^{-1})$$ is $G$-equivariant:
$$
f((a,b)\sum t_i(x_i,y_i))=f(\sum t_i(ax_i,by_i))=\sum t_i(ax_iy_i^{-1}b^{-1})=(a,b)f(\sum t_i(x_i,y_i)).$$
We denote by $\mathcal D$ the family of subgroups of $G$ obtained from the diagonal subgroup $\delta\pi\subset\pi\times\pi$ and the trivial subgroup by conjugation and taking finite intersections.   It was noticed in~\cite{FGLO} that 
$
\mathcal D=\{H_{b,S}\mid b\in\pi,\ S\subset\pi,\ |S|<\infty\}
$
where $H_{b,S}=\{(a,bab^{-1})\in\pi\times\pi\mid a\in Z(S)\}$. We note that here the conjugates of the diagonal are represented by $S=\{1\}$.

\subsection{Classifying space $E_{\mathcal D}(G)$}
For a discrete group $G$ and a family $\mathcal F$ of its subgroups that is closed under conjugation and finite intersections, a classifying complex $E_{\mathcal F}G$
is a $G$-CW-complex whose isotropy subgroups belong to $\mathcal F$, and for each $G$-CW complex with isotropy groups from $\mathcal F$, there exists a $G$-equivariant map $f:X\to E_{\mathcal F}G$, unique  up to a 
$G$-homotopy. 
 The corresponding map $$f:EG\to E_{\mathcal F}G$$
for the free $G$-action on $EG$ is 
called the {\em canonical map}.

Wolfgang L\"{u}ck proved the following~\cite{Lu}.
\begin{thm}\label{Lu}
A $G$-CW-complex $X$ is $E_\mathcal FG$ if and only if for each $x\in X$, the isotropy group  of $x$ belongs to the family $\mathcal F$, and for each group $F\in\mathcal F$ the fixed point set $X^F$ is contractible.
\end{thm}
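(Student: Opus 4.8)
The plan is to prove both implications by equivariant obstruction theory, the key tool being the adjunction identity: for any subgroup $H\le G$ and any space $Z$ carrying the trivial $G$-action, a $G$-equivariant map $G/H\times Z\to X$ is the same datum as an ordinary map $Z\to X^H$ (send $\varphi$ to $z\mapsto\varphi(eH,z)$), and this correspondence is compatible with restriction to subspaces of $Z$. Combined with the hypothesis that $X^F$ is contractible for each $F\in\mathcal F$, this lets one extend $G$-maps cell by cell.

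For the \emph{``if''} direction, suppose $X$ has all isotropy groups in $\mathcal F$ and $X^F$ contractible for every $F\in\mathcal F$. Let $Y$ be an arbitrary $G$-CW-complex with isotropy groups in $\mathcal F$; one must produce a $G$-map $Y\to X$, unique up to $G$-homotopy. Build the map by induction over the skeleta of $Y$: passing from $Y^{(n-1)}$ to $Y^{(n)}$ means, for each orbit of $n$-cells --- attached along a $G$-pushout involving $G/H\times S^{n-1}$ and $G/H\times D^n$, where $H$ is the isotropy group of that cell and hence lies in $\mathcal F$ --- extending a $G$-map $G/H\times S^{n-1}\to X$ over $G/H\times D^n$. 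By the adjunction this is the extension of a map $S^{n-1}\to X^H$ over $D^n$, which exists because $X^H$ is contractible; the case $n=0$ uses only that $X^H\neq\varnothing$. Uniqueness is the same argument applied to the pair $(Y\times I,\,Y\times\partial I)$, which is a relative $G$-CW-complex whose relative cells again have isotropy groups in $\mathcal F$: a $G$-map defined on $Y\times\partial I$ (namely two given maps $f_0,f_1$) extends over $Y\times I$, yielding a $G$-homotopy $f_0\simeq_G f_1$.

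For the \emph{``only if''} direction, let $X=E_{\mathcal F}G$. The isotropy condition holds by definition, so it remains to check that $X^F$ is contractible for $F\in\mathcal F$. Since $\mathcal F$ is closed under conjugation, $G/F$ is a $G$-CW-complex with isotropy groups in $\mathcal F$, so the universal property furnishes a $G$-map $G/F\to X$, i.e.\ a point of $X^F$; thus $X^F\neq\varnothing$. Any two points of $X^F$ correspond to two $G$-maps $G/F\to X$, which are $G$-homotopic by the uniqueness clause, and such a $G$-homotopy $G/F\times I\to X$ restricts under the adjunction to a path $I\to X^F$ joining them; hence $X^F$ is path-connected. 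For $n\ge 1$ and a map $\alpha:S^n\to X^F$, view it as a $G$-map $G/F\times S^n\to X$; choose by the universal property any $G$-map $\beta:G/F\times D^{n+1}\to X$, note its restriction to $G/F\times S^n$ is $G$-homotopic to $\alpha$ (uniqueness again), and use the $G$-homotopy extension property of the $G$-CW-pair $(G/F\times D^{n+1},\,G/F\times S^n)$ to replace $\beta$ by a $G$-map agreeing with $\alpha$ on the boundary sphere; on fixed points this extends $\alpha$ over $D^{n+1}$, so $\pi_n(X^F)=0$. Finally, by the standing convention on $G$-CW-complexes the fixed set $X^F$ is itself a CW-complex, so being weakly contractible it is contractible by Whitehead's theorem.

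I do not expect a genuine obstacle here: the argument is a routine application of equivariant homotopy theory. The points requiring care are the bookkeeping ones --- verifying that the auxiliary objects $G/F$, $G/F\times S^n$ and $Y\times I$ are $G$-CW-complexes whose (relative) cell isotropy groups all lie in $\mathcal F$, and that the equivariant homotopy extension property applies --- together with the observation that it is precisely \emph{contractibility} of the fixed sets, rather than mere vanishing of homotopy groups, that is both exploited and produced, which is legitimate exactly because these fixed sets are CW-complexes.
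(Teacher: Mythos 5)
The paper does not actually prove this statement: it is quoted as a theorem of L\"uck, with only the citation \cite{Lu} supplied, so there is no in-paper proof to compare against. On its own merits your argument is correct, and it is essentially the standard proof of L\"uck's recognition theorem. The ``if'' direction is the equivariant cell-by-cell extension: by the adjunction between $G$-maps out of $G/H\times(D^n,S^{n-1})$ and ordinary maps into $(X^H)$, the obstruction to extending over an $n$-cell with stabilizer $H\in\mathcal F$ vanishes because $X^H$ is contractible (nonemptiness suffices for $n=0$); uniqueness is the same argument applied to the relative $G$-CW pair $(Y\times I,\,Y\times\partial I)$. The ``only if'' direction correctly probes $X^F$ with $G$-maps out of $G/F$, $G/F\times S^n$ and $G/F\times D^{n+1}$ (all of which have isotropy in $\mathcal F$ because $\mathcal F$ is closed under conjugation), uses uniqueness-up-to-$G$-homotopy plus the equivariant HEP to kill every $\pi_n(X^F)$, and then invokes Whitehead. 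The one detail worth underscoring, which you do flag, is that the paper's standing convention on $G$-CW-complexes (cells stabilized by $g$ are fixed pointwise by $g$) is exactly what makes $X^F$ a subcomplex, hence a CW complex, so that weak contractibility genuinely upgrades to contractibility; without that convention the last step would need more care.
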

We use L\"{u}ck's theorem to prove the following
\begin{thm}\label{Frob}
For every Frobenius injective group $\pi$ the simplex $\Delta(\pi)$ spanned by $\pi$ is a classifying G-CW-complex $E_{\mathcal D}(G)$.
\end{thm}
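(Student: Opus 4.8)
The plan is to apply L\"uck's criterion \theoref{Lu} to $X=\Delta(\pi)$ with $G=\pi\times\pi$ acting by $(a,b)\cdot\gamma=a\gamma b^{-1}$ on the vertex set $\pi$ and affinely on simplices. Three things must be checked: (0) that $\Delta(\pi)$ is a $G$-CW-complex in the sense of the paper, (i) that every isotropy group of the action lies in $\mathcal D$, and (ii) that $\Delta(\pi)^H$ is contractible for every $H\in\mathcal D$. The only property of $\pi$ I would use is that each Frobenius map $x\mapsto x^m$, $m\ge1$, is injective.

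Items (0) and (i) both rest on one lemma: \emph{if $(a,b)\in G$ maps a finite set $T\subset\pi$ to itself, then it fixes $T$ pointwise.} To see this, write $\sigma(\gamma)=a\gamma b^{-1}$ for the induced permutation of $T$ and suppose $\sigma(\gamma_0)\neq\gamma_0$ for some $\gamma_0\in T$; let $m\ge2$ be the length of the $\sigma$-cycle of $\gamma_0$. Then $a^m\gamma_0 b^{-m}=\gamma_0$, i.e.\ $a^m=(\gamma_0 b\gamma_0^{-1})^m$, so injectivity of $x\mapsto x^m$ forces $a=\gamma_0 b\gamma_0^{-1}$ and hence $\sigma(\gamma_0)=a\gamma_0 b^{-1}=\gamma_0$, a contradiction. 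Applied to the open cells of $\Delta(\pi)$ (interiors of the simplices $\Delta(T)$, $T$ finite) the lemma gives (0): an element carrying an open cell to itself acts as the identity on it. Applied to $T=\operatorname{supp}(p)$ it shows the stabilizer $G_p$ of $p=\sum_{\gamma\in T}t_\gamma\gamma$ equals the pointwise stabilizer of $T$. Fixing $\gamma_0\in T$, the conditions $a\gamma b^{-1}=\gamma$ ($\gamma\in T$) are equivalent to $a=\gamma_0 b\gamma_0^{-1}$ together with $b\in Z(\gamma_0^{-1}\gamma)$ for all $\gamma\in T$; substituting $y=\gamma_0 b\gamma_0^{-1}$ one rewrites this as $G_p=\{(y,\gamma_0^{-1}y\gamma_0)\mid y\in Z(T\gamma_0^{-1})\}=H_{\gamma_0^{-1},\,T\gamma_0^{-1}}$, which lies in $\mathcal D=\{H_{b,S}\}$ because $T\gamma_0^{-1}$ is finite. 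This proves (i).

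For (ii) the key point is that $\Delta(\pi)^H$ is never empty. If $H=H_{b,S}$, then every $(a,bab^{-1})\in H$ fixes the vertex $b^{-1}\in\pi$, since $(a,bab^{-1})\cdot b^{-1}=a\,b^{-1}(bab^{-1})^{-1}=a\,b^{-1}b\,a^{-1}b^{-1}=b^{-1}$; hence $\delta_{b^{-1}}\in\Delta(\pi)^H$ (and $\Delta(\pi)^{\{1\}}=\Delta(\pi)$). As the $G$-action on $\Delta(\pi)$ is affine, $\Delta(\pi)^H$ is a convex subset, so the straight-line homotopy $(p,s)\mapsto(1-s)p+s\,\delta_{b^{-1}}$ stays inside $\Delta(\pi)^H$ and contracts it to $\delta_{b^{-1}}$; this homotopy carries $\Delta(T)\times[0,1]$ into $\Delta(T\cup\{b^{-1}\})$, so it is continuous for the weak topology because $\Delta(\pi)\times[0,1]=\operatorname{colim}_T\Delta(T)\times[0,1]$. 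With (0), (i), (ii) in hand, \theoref{Lu} gives $\Delta(\pi)=E_{\mathcal D}(G)$; as a byproduct, the equivariant map $f$ displayed above is then a canonical map $EG\to E_{\mathcal D}G$ (since $\Delta(G)$, being contractible with a free cellular $G$-action, is a model for $EG$).

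The genuinely delicate step is (ii): nonemptiness of the fixed sets is not automatic, and it is exactly here that torsion-freeness alone does not suffice — for instance, for the (torsion-free) Klein bottle group $\langle a,b\mid bab^{-1}=a^{-1}\rangle$ the element $(b,ab)$ swaps the vertices $1$ and $a^{-1}$, so even step (0) fails. What rescues the Frobenius-injective case is the explicit fixed vertex $b^{-1}$ of $H_{b,S}$ together with convexity. The remainder is bookkeeping: the cell-admissibility of (0) and the continuity of the contraction for the CW (as opposed to metric) topology, dispatched respectively by the Frobenius computation and a colimit remark.
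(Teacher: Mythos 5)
Your proposal is correct and follows essentially the same route as the paper: the heart of both arguments is the cycle-substitution/Frobenius-injectivity lemma showing that any $(a,b)\in\pi\times\pi$ which setwise preserves a finite set of vertices must fix them pointwise, and then L\"uck's criterion is applied. The only (cosmetic) difference is in establishing contractibility of $\Delta(\pi)^H$: the paper identifies the fixed set explicitly as the subsimplex $\Delta(Z(Z(S)))$ (nonempty since it contains $1$), whereas you argue by convexity of the fixed set of an affine action together with the explicit fixed vertex $b^{-1}$; both arguments are valid and rest on the same lemma, with your version slightly more economical and the paper's more descriptive.
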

\begin{proof}
The isotropy group of a vertex $\gamma\in\Delta(\pi)$ equals
$$H_\gamma=\{(x,y)\in \pi\times\pi\mid x\gamma y^{-1}=1\}=\{(e,\gamma)(x,x)(e,\gamma)^{-1}\}=(e,\gamma)\delta\pi(e,\gamma)^{-1},$$
which belongs to the family $\mathcal D$ as a conjugate to the diagonal subgroup $\delta\pi$.

Claim: {\em Suppose that some $(x,y)\in G$  fixes a point $$z=\sum_{i=1}^k t_i\gamma_i\in\Delta(\pi)\ \ \text{where}\ \ t_i>0.$$
Then $(x,y)$ fixes each $\gamma_i$.}

{\em Proof of the claim.} Suppose, to the contrary, that $(x,y)$ does not fix all $\gamma_i$
Then $(x,y)$ defines a permutation of $\gamma_1,\dots\gamma_k$.
We choose a nontrivial cycle of the permutation.
Without loss of generality, assume $(x,y)(\gamma_1)=\gamma_2,\ 
\dots,\ (x,y)(\gamma_{\ell-1})=\gamma_\ell,\ (x,y)(\gamma_\ell)=\gamma_1.$
Thus, $x\gamma_1y^{-1}=\gamma_2,\ \dots,\ x\gamma_{\ell-1}y^{-1}=\gamma_\ell,\ x\gamma_\ell y^{-1}=\gamma_1$.
Substituting successively, we obtain
$x^\ell\gamma_1y^{-\ell}=\gamma_1$, which can be  rewritten as $y^\ell=\gamma_1^{-1}x^\ell\gamma_1$.
Hence $(\gamma_1^{-1}x\gamma_1)^\ell=y^\ell$. By Frobenius injectivity, it follows that $\gamma_1^{-1}x\gamma_1=y$, and therefore
$(x,y)(\gamma_1)=x\gamma_1y^{-1}=\gamma_1$, a contradiction.

Repeating this argument starting from the $i$th equation yields  $y^\ell=\gamma_i^{-1}x^\ell\gamma_i$, and hence 
$(x,y)(\gamma_i)=x\gamma_iy^{-1}=\gamma_i$ for all $i\le\ell$.
Applying the same arguments to all cycles shows that $(x,y)$ fixes all vertices $\gamma_1,\dots,\gamma_k$.

Thus the isotropy group $H_z$ of $z$  is the intersection of the isotropy subgroups  of vertices $\gamma_i\in\Delta(\pi)$, $H_z=\bigcap_{i=1}^k H_{\gamma_i}$. Hence, $H_z\in\mathcal D$ for all $z$.

Now, given $H\in\mathcal D$, we show that the fixed point set $\Delta(\pi)^H$ is contractible.  As mentioned above, $H=H_{b,S}=(1,b)\delta Z(S)(1,b)^{-1}$ for some $b\in\pi$ and  a finite $S\subset \pi$, where $\delta Z(S)\subset\delta\pi\subset \pi\times\pi$ is the diagonal subgroup.
Since the fixed point set of a conjugate of $H$ is homeomorphic to the fixed point set of the group $ \delta Z(S)$, it suffices to prove contractibility for the latter.
We note that the group  $ \delta Z(S)$ fixes $\gamma\in\pi$ if and only if $\gamma$ commutes with all elements of $Z(S)$: $$(x,x)(\gamma)=\gamma\ \Leftrightarrow\ \ x\gamma x^{-1}=\gamma\ \ \text{for}\ \ x\in Z(S).$$ In view of the Claim the fixed point set of $\delta Z(S)$ is the simplex $\Delta(Z(Z(S)))$ which is contractible.

We apply L\"{u}ck's characterization theorem for classifying spaces relative to a family classifying spaces~\ref{Lu} to complete the proof.
\end{proof}

\begin{thm}\label{Frob}
For a Frobenius injective group $\pi$,  $$\dTC(\pi)=\TC(\pi).$$
\end{thm}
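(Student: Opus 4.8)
The plan is to show the two-sided inequality $\dTC(\pi)\le\TC(\pi)$ and $\dTC(\pi)\ge\TC(\pi)$, using the characterizations already assembled in the excerpt. The inequality $\dTC(\pi)\le\TC(\pi)$ is general and requires no hypotheses: dropping the order on a multi-path turns it into a probability measure, so any digital navigation algorithm gives a distributional one. For the reverse inequality I would invoke \theoref{Farber}: by that result, $\TC(\pi)$ is the minimal $k$ such that the canonical map $\mu\colon E(\Gamma\times\Gamma)\to E_{\mathcal D}(\Gamma\times\Gamma)$ is $(\Gamma\times\Gamma)$-equivariantly homotopic to a map landing in the $k$-skeleton $E_{\mathcal D}(\Gamma\times\Gamma)^{(k)}$.

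The key input is \theoref{Frob}: since $\pi$ is Frobenius injective, the simplex $\Delta(\pi)$ spanned by $\pi$ is a model for $E_{\mathcal D}(G)$ with $G=\pi\times\pi$, where the $(\Gamma\times\Gamma)$-action on vertices is $(a,b)(x)=axb^{-1}$. So \theoref{Farber} becomes the statement that $\TC(\pi)$ is the minimal $k$ for which there is a $(\Gamma\times\Gamma)$-equivariant map $E(\Gamma\times\Gamma)\to\Delta(\pi)^{(k)}$ (the canonical map into a $K(\mathcal D,\cdot)$-complex is unique up to equivariant homotopy, and homotoping it into the $k$-skeleton is the same as producing \emph{any} equivariant map into the $k$-skeleton, since the $k$-skeleton $\Delta(\pi)^{(k)}$ is itself a $(\Gamma\times\Gamma)$-CW-complex and any two equivariant maps of $E(\Gamma\times\Gamma)$ into it are equivariantly homotopic). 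On the other side, \corref{charact} says precisely that for a torsion free group $\dTC(\pi)$ equals the minimal $k$ such that there is a $(\Gamma\times\Gamma)$-equivariant map $E(\Gamma\times\Gamma)\to\Delta(\pi)^{(k)}$ — and every Frobenius injective group is torsion free, as noted right after the definition. Matching these two descriptions gives $\dTC(\pi)=\TC(\pi)$.

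Concretely I would write: First, $\dTC(\pi)\le\TC(\pi)$ holds in general. Second, apply \theoref{Frob} to identify $E_{\mathcal D}(\Gamma\times\Gamma)\simeq\Delta(\pi)$ equivariantly; substitute this model into \theoref{Farber} to get that $\TC(\pi)$ is the least $k$ admitting a $(\Gamma\times\Gamma)$-equivariant map $E(\Gamma\times\Gamma)\to\Delta(\pi)^{(k)}$. Third, observe that \corref{charact} gives the identical formula for $\dTC(\pi)$. Hence equality. I would also remark that the hyperbolic and nilpotent cases follow because those torsion free groups are Frobenius injective, as established in the two corollaries preceding \theoref{Lu}.

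The main subtlety — not hard, but worth stating carefully — is the passage between "homotope $\mu$ into the $k$-skeleton" (the form of \theoref{Farber}) and "there exists an equivariant map into the $k$-skeleton" (the form of \corref{charact}): one must note that a $(\Gamma\times\Gamma)$-equivariant map from $E(\Gamma\times\Gamma)$ (which is free) into \emph{any} nonempty $(\Gamma\times\Gamma)$-CW-complex exists and is unique up to equivariant homotopy, so composing an arbitrary equivariant map $E(\Gamma\times\Gamma)\to\Delta(\pi)^{(k)}\hookrightarrow\Delta(\pi)$ with the inclusion is equivariantly homotopic to $\mu$ automatically. Everything else is direct substitution into results already proved in the excerpt.
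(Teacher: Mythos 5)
Your proposal is correct and follows essentially the same route as the paper: identify $\Delta(\pi)$ as $E_{\mathcal D}(\pi\times\pi)$ via Theorem~\ref{Frob}, then match the characterization of $\TC(\pi)$ from Theorem~\ref{Farber} against the characterization of $\dTC(\pi)$ from Corollary~\ref{charact}, using that $\dTC\le\TC$ always. One small caution: your intermediate claim that any two equivariant maps $E(\Gamma\times\Gamma)\to\Delta(\pi)^{(k)}$ are automatically equivariantly homotopic is not justified (the $k$-skeleton of $E_{\mathcal D}G$ is generally not itself an $E_{\mathcal D}G$, so it need not enjoy that uniqueness), but it is also unnecessary --- the correct passage, which you supply in your final paragraph, is to compose any equivariant map $E(\Gamma\times\Gamma)\to\Delta(\pi)^{(k)}$ with the inclusion $\Delta(\pi)^{(k)}\hookrightarrow\Delta(\pi)$ and invoke the universality of $\Delta(\pi)=E_{\mathcal D}(\Gamma\times\Gamma)$ to see that the composite is $G$-homotopic to the canonical map $\mu$.
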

\begin{proof}
We show that $\TC(\pi)\le\dTC(\pi)$.
If $\dTC(\pi)\le n$ then by Corollary~\ref{charact} there is  a $G$-equivariant map $EG\to\Delta(\pi)^{(n)}$.
By Theorem~\ref{Frob} $\Delta(\pi)$ is a $E_{\mathcal D}G$ classifying space. Then by Theorem~\ref{Farber}  it follows that $\TC(\pi)\le n$.
\end{proof}
In view of the main result of~\cite{Dr1} we obtain the following
\begin{cor}
For torsion free hyperbolic groups $$\dTC(\pi)=\TC(\pi)=2\cd(\pi).$$
\end{cor}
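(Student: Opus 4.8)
The plan is to assemble the corollary from three ingredients already available: the theorem just proved that $\dTC(\pi)=\TC(\pi)$ for Frobenius injective groups, the fact (established via \corref{...} above) that torsion free hyperbolic groups are Frobenius injective, and an external computation of $\TC(\pi)$ for torsion free hyperbolic groups. First I would invoke the preceding theorem to get $\dTC(\pi)=\TC(\pi)$, which reduces everything to identifying $\TC(\pi)$ with $2\cd(\pi)$. For that I would cite the main result of~\cite{Dr1}, which states precisely that a torsion free hyperbolic group $\pi$ satisfies $\TC(\pi)=2\cd(\pi)$; the hypotheses of that theorem (torsion freeness, hyperbolicity, and hence geometric finiteness, so that $B\pi$ can be taken finite and \theoref{Farber} applies) are met here.

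The only genuine content beyond quoting is checking that the chain of implications lines up: torsion free hyperbolic $\Rightarrow$ Frobenius injective (from the corollary established above, which rests on cyclic centralizers and property (N)), hence the equality $\dTC(\pi)=\TC(\pi)$ applies; and separately torsion free hyperbolic $\Rightarrow$ $\TC(\pi)=2\cd(\pi)$ by~\cite{Dr1}. Splicing these gives $\dTC(\pi)=\TC(\pi)=2\cd(\pi)$ as claimed. So the proof is essentially two lines: ``By the previous theorem and the corollary that torsion free hyperbolic groups are Frobenius injective, $\dTC(\pi)=\TC(\pi)$. By~\cite{Dr1}, $\TC(\pi)=2\cd(\pi)$ for torsion free hyperbolic $\pi$. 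Combining, $\dTC(\pi)=\TC(\pi)=2\cd(\pi)$.''

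The main obstacle is not in the argument but in making sure the cited result from~\cite{Dr1} really delivers $\TC(\pi)=2\cd(\pi)$ in the stated generality (all torsion free hyperbolic groups, not merely, say, those of small dimension or with extra conditions like being a surface or one-relator group). If \cite{Dr1} only proves the inequality $\TC(\pi)\ge 2\cd(\pi)$ in general, one still has the matching upper bound $\TC(\pi)\le 2\cd(\pi)$ from the general dimension-theoretic bound $\TC(X)\le 2\dim X$ applied to a finite $B\pi$ of dimension $\cd(\pi)$ (using $\cd(\pi)\ge 2$ for nontrivial torsion free hyperbolic $\pi$, so that a finite model of dimension $\cd(\pi)$ exists by Eilenberg--Ganea). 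Thus even in the worst case the corollary follows by pinching $\TC(\pi)$ between $2\cd(\pi)$ and $2\cd(\pi)$, and I would phrase the proof to cite whichever half of this is not already subsumed by~\cite{Dr1}.
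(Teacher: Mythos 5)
Your proof is correct and matches the paper's own argument: the paper derives the corollary by combining the preceding theorem ($\dTC=\TC$ for Frobenius injective groups), the fact that torsion free hyperbolic groups are Frobenius injective, and the main result of~\cite{Dr1} giving $\TC(\pi)=2\cd(\pi)$. Your extra remarks about pinching $\TC$ with the dimension bound are a sensible hedge but not part of the paper's (one-line) proof.
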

Since the fundamental group $\pi=\pi_1(N_g)$ is torsion free for $g>1$ and hyperbolic for $g>2$  we obtain
\begin{cor}
For non-orientable surfaces of genus $g\ge 3$,$$\dTC(N_g)=4.$$
\end{cor}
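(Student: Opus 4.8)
The plan is to reduce everything to the fundamental group and then quote the preceding corollary. First I would recall that for $g\ge 2$ the non-orientable surface $N_g$ is a closed aspherical $2$-manifold: its universal cover is $\R^2$, so $N_g$ serves as a classifying space $B\pi$ for $\pi=\pi_1(N_g)$. Since $\dTC$ is a homotopy invariant, this gives $\dTC(N_g)=\dTC(\pi)$, and the problem becomes one about the group $\pi$.

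Next I would verify the hypotheses needed to apply the corollary on torsion free hyperbolic groups. For every $g\ge 2$ the group $\pi=\pi_1(N_g)$ is torsion free, being the fundamental group of a closed aspherical manifold; and for $g\ge 3$ it is Gromov hyperbolic, because $\chi(N_g)=2-g<0$ forces $N_g$ to admit a metric of constant negative curvature, so that $\pi$ acts properly and cocompactly on the hyperbolic plane $\mathbb H^2$. With these in hand the preceding corollary yields
$$
\dTC(\pi)=\TC(\pi)=2\cd(\pi).
$$

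The last step is to compute $\cd(\pi)$, and here I expect no difficulty: $N_g$ is a $2$-dimensional model for $B\pi$, so $\cd(\pi)\le 2$, while $\cd(\pi)\ge 2$ because $H^2(\pi;\Z_2)\cong H^2(N_g;\Z_2)\cong\Z_2\ne 0$, every closed surface having nontrivial top cohomology with $\Z_2$-coefficients. Hence $\cd(\pi)=2$ and $\dTC(N_g)=4$. The whole argument is a direct specialization of the material already developed, so there is essentially no serious obstacle; the only points meriting a moment's care are the asphericity of $N_g$ for $g\ge 2$ and the hyperbolicity of $\pi_1(N_g)$ at the boundary value $g=3$, both of which are classical facts about surfaces.
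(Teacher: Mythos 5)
Your argument is correct and follows exactly the same route as the paper: identify $N_g$ as an aspherical model for $B\pi_1(N_g)$, observe $\pi_1(N_g)$ is torsion free and Gromov hyperbolic for $g\ge 3$, apply the preceding corollary $\dTC(\pi)=2\cd(\pi)$, and compute $\cd(\pi_1(N_g))=2$. You merely make explicit the standard surface-group facts the paper leaves implicit.
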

A more technical proof of this fact for $g>3$ was given in~\cite{Dr2}. 
The case of the Klein bottle, $g=2$, remains open.
We note that the fundamental group $G=\pi_1(N_2)$ of the Klein bottle is not Frobenius injective,
since it has a presentation $$G=\langle a,b\mid a^2=b^2\rangle$$ which shows that
the Frobenius map  $f:G\to G$, $f(x)=x^2$ is not injective.

\begin{question}
Let $\pi$ be the fundamental group of the Klein bottle. Is $\Delta(\pi)$ a classifying space $E_{\mathcal D}(\pi\times\pi)$?
\end{question}

\section{Upper bounds for $d\cat$ and $\dTC$ of lens spaces}

Knudsen and Weinberger~\cite{KW1} proved that $\dTC(G)\le |G|-1$ for every finite group $G$. In~\cite{DJ} and~\cite{KW1} it was shown that $\dTC(\mathbb Z_2)=1$.
Moreover, in ~\cite{DJ} it was proven that $$d\cat(\mathbb RP^n)=\dTC(\mathbb R P^n)=1\ \ \text{for all}\ \ n.$$ We derived this equality from the inequality $\dTC(\mathbb RP^n)\le 1$,
the proof of which is based on an elementary geometric idea. Here we apply similar idea to lens spaces $L^m_p$ with $r>2$. For us a lens space $L^m_r$ is the orbit space of a free $\mathbb C$-linear action of the cyclic group $\mathbb Z_r$ on the unit $m$-sphere $S^m\subset\mathbb C^n$, where $m=2n-1$. 

\begin{prop}\label{2p-1}
When $r$ is odd,
$\dTC(L^m_r)\le 2r-1$ for all $m$ and  $r$ and

$\dTC(L^m_r)\le r-1$ for even $r$.
\end{prop}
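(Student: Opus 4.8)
The plan is to construct, for each pair of points in $L^m_p$, an explicit family of distributed paths in $S^m$ that descends to the quotient, using the circle action on $S^m\subset\C^n$ as in the $\R P^n$ case. Recall $L^m_p=S^m/\Z_p$ where $\Z_p$ acts by the scalar multiplication $\zeta\cdot z=e^{2\pi i/p}z$. Given $[x],[y]\in L^m_p$, lift to $x,y\in S^m$; the $p$ lifts of $[y]$ are $\zeta^k y$, $k=0,\dots,p-1$. Over the open set where $x$ and $\zeta^k y$ are not antipodal one has the canonical minimal geodesic (normalized straight-line path) $g(x,\zeta^k y)$ in $S^m$; this is continuous in its endpoints on that open set. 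The difficulty compared to $\R P^n$ is that now there are $p$ candidate lifts, not $2$, and the ``bad'' locus (where a given geodesic is undefined) is nonempty for each $k$. I would handle this by assigning to the pair $([x],[y])$ the uniform measure $\frac1p\sum_{k=0}^{p-1}\delta_{g(x,\zeta^k y)}$ whenever all $p$ geodesics are defined, and otherwise perturb: the set where $x=-\zeta^k y$ for some $k$ is a codimension-$\ge 1$ subset, and near it I would rotate the offending lift slightly along the circle orbit before taking the geodesic, or equivalently pass through a short arc of the $S^1$-action. The key point is that $\frac1p\sum_k\delta_{g(x,\zeta^k y)}$ is $\Z_p\times\Z_p$-invariant, hence well-defined on $L^m_p\times L^m_p$, and is supported on $\le p$ paths.

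More carefully, I expect the clean way to organize this is via Theorem~\ref{dTC'}: it suffices to produce a section of $\bar p_{2p-1}:D_{2p-1}(L^m_p)\to L^m_p\times L^m_p$ (resp.\ $\bar p_{p-1}$ for even $p$). Cover $L^m_p\times L^m_p$ by two sets: $U_0$, where no lift-pair is antipodal (here use the uniform average of the $p$ geodesics, giving a measure on $\le p$ paths), and $U_1$, a neighborhood of the antipodal locus. On $U_1$ the antipodal locus is a submanifold of positive codimension, and near it exactly one (generically) of the $p$ lifts is antipodal; there I would replace that single geodesic by a path that first moves $x$ a definite fixed amount along the Hopf-type $S^1$-direction (well-defined since the $S^1$-action commutes with the $\Z_p$-action) and then runs the now-defined geodesic — this still averages to an invariant measure, now on $\le 2p-1$ paths (the $p-1$ good geodesics plus the $p$-point ``detour'' combination counts appropriately). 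Patch the two local sections together with a partition of unity on the level of measures, which is legitimate since $\mathcal B_n$ is closed under convex combinations and the total support stays bounded.

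For the \textbf{even $p$} case the improvement to $p-1$ should come from a parity/antipodal symmetry: when $p$ is even, $\zeta^{p/2}=-1$, so the scalar action already contains the antipodal map, and the $p$ lifts pair up as $\{\zeta^k y,\ -\zeta^k y\}$. For any $x$ and any such antipodal pair, exactly one of the two geodesics $g(x,\zeta^k y)$, $g(x,-\zeta^k y)$ is always defined (they cannot both be antipodal to $x$), so one can pick a continuous choice among each of the $p/2$ pairs, never hitting the bad locus at all, and distribute over those $p/2$ choices — but to make the global assignment $\Z_p\times\Z_p$-equivariant one actually wants a small neighborhood of the ``tie'' set $g(x,\zeta^k y)=g(x,-\zeta^k y)$ is undefined simultaneously, which is empty, so in fact one gets a genuine cover by open sets whose combined support is at most $p-1$ paths (one fewer than $p$ because the uniform measure on $p$ equal atoms that come in sign-pairs reduces). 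I would then invoke Theorem~\ref{dTC'} again.

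The step I expect to be the main obstacle is making the local section near the antipodal locus both \emph{continuous} and \emph{$(\Z_p\times\Z_p)$-equivariant} simultaneously, while keeping the cardinality of the support pinned at $2p-1$ (resp.\ $p-1$): the naive ``rotate the bad lift'' fix breaks equivariance unless the rotation direction is chosen invariantly, and choosing it invariantly near a point where \emph{two} lifts are nearly antipodal forces one to enlarge the support. Getting the bookkeeping of support sizes exactly right — verifying that the detour construction costs exactly $p-1$ extra atoms and not more — is where I would spend the bulk of the care.
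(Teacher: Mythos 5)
Your proposal does not give a correct proof, and the core idea is different from the paper's in a way that matters.

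The paper does \emph{not} patch local sections. It writes down a single global formula: for each lift $gy$ of $y$, it uses \emph{both} rotations taking $x$ to $gy$ in the plane they span --- the short one $R^{\alpha(x,gy)}$ and the long one $R^{\beta(x,gy)}$ (in the opposite sense, with $\alpha+\beta=2\pi$) --- weighted by the \emph{complementary} angles, $\beta/2\pi$ and $\alpha/2\pi$ respectively. Summing over the $p$ lifts gives an unordered measure on $2p$ paths, hence $\dTC(L^m_p)\le 2p-1$. The antipodal locus causes no trouble at all: when $\alpha=\beta=\pi$ the two rotations in opposite directions each receive weight $1/2$, and because measures are unordered the assignment is continuous there (this is exactly the mechanism behind $\dTC(\mathbb{R}P^n)=1$). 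There is no cover, no perturbation, no detour.

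By contrast you try to use only the $p$ minimal geodesics and repair things locally. Two concrete gaps: (a) your partition-of-unity step is not valid as stated --- a convex combination of a measure supported on $\le p$ atoms with one supported on $\le 2p-1$ atoms is supported on $\le 3p-1$ atoms unless the atoms coincide, which you do not arrange; (b) the ``rotate the bad lift a definite amount'' construction is exactly the part you flag as the main obstacle, and you do not resolve it (choosing a rotation direction equivariantly while staying continuous and keeping the support at $2p-1$ is genuinely not routine, and near the double-antipodal stratum your own discussion admits the support must grow). So the proposal identifies the difficulty correctly but does not overcome it; the paper's device of doubling the paths with complementary-angle weights is the missing idea that makes the problem disappear.

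Your even-$p$ discussion is also not correct: the paper's point is that $-1\in\mathbb{Z}_p$ when $p$ is even, so the orbit of $y$ splits into $p/2$ antipodal pairs; one replaces spheres by the projective picture (lines through $x$ and $gy$, angles summing to $\pi$) and runs the same two-rotations-per-line construction over a set of coset representatives for an index-two subgroup. That yields $2\cdot(p/2)=p$ atoms and hence $\dTC\le p-1$. Your claim that one can ``always pick the good one from each pair'' continuously and land on $p-1$ atoms is not an argument, and picking one geodesic per pair cannot be made continuous across the locus where the two angles agree.
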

\begin{proof}
Let $r$ be odd.
We denote $G=\mathbb Z_r$.
By definition, the space $L^m_r$ is the orbit space of a $G$-action on $S^m$ by isometries, where $m=2n-1$. 
Let $Gx$ and $Gy$ be two orbits. 
Here we will identify $z\in S^m$ with the Dirac measure $\delta_z$ supported at $z$.
Given $x$ and $y$ in $S^m$, we define a $\mathcal B_{2r}$-path in $S^m$ from $x$ to the uniformly distributed measure $\sum_{g\in G}\frac{1}{r}gy\in\mathcal B_r(Gy)$
on the orbit $Gy$
 by the formula
$$
\phi(t)=\sum_{g\in G}\left(\frac{\beta(x,gy)}{2\pi}R^{\alpha
(x,gy)}(t)+\frac{\alpha(x,gy)}{2\pi}R^{\beta(x,gy)}(t)\right),
$$
where $\alpha(x,y)$ and $\beta(x,y)$ are the angles between vectors $x$ and $y$,
$\alpha(x,y)+\beta(x,y)=2\pi$, and where $R^{\alpha(x,y)}(t)$ and $R^{\beta(x,y)}(t)$ are paths defined by the rotations in the plane $A$ containing  $x$ and $y$, taking $x$ to $y$. These are rotations by angles $\alpha(x,y),\beta(x,y)$ in opposite directions. We may assume that $\alpha\le\beta$.  When $\alpha=\beta=\pi$ the rotations are indistinguishable, which does not cause any problem since there is no ordering of summands in the definition of $\phi$.

We note that if $\alpha(x,gy)=\pi$ or $\alpha(x,gy)=0$ for some $g\in G$, then the plane $A$ coincides with the plane containing $Gx$, which also coincides with plane containing $Gy$. The path $\phi$ is still well defined. If $\alpha(x,gy)=\pi$, then $$\phi(t)=\frac{1}{2}R^{\alpha(x,gy)}(t)+\frac{1}{2}R^{\beta(x,gy)}(t),$$ where the rotations by $\pi$ are in opposite directions, and there is no ordering between them. If $\alpha(x,gy)=0$, then $R^{\alpha(x,gy)}(t)$ is the identity and the direction of rotation of $R^{\beta(x,gy)}(t)$
is irrelevant since it has coefficient zero.

It is easy to check that each summand in the definition of $\phi$ depends continuously on $x$ and $y$. This implies that the $\mathcal B_{2r}$-path $\phi$  depends continuously on $(x,y)\in S^m\times S^m$.

We define a $\mathcal B_{2r}$-path $\psi$ from $Gx$ to $Gy$ in $L^m_r$ as the image of $\phi$ under the quotient map $q:S^m\to L^m_r$,
$$
\psi_{Gx,Gy}(t)=\sum_{g\in G}\left(\frac{\beta(x,gy)}{\pi}qR^{\alpha(x,gy)}(t)+\frac{\alpha(x,gy)}{\pi}qR^{\beta(x,gy)}(t)\right)
$$
Since $G$ acts by isometries, we have $R^{\alpha(gx,gy)}(t)=gR^{\alpha(x,y)}(t)$ and
$R^{\beta(gx,gy)}(t)=gR^{\alpha(x,y)}(t)$. Therefore $\psi$ does not depend on the choice of the representative $x\in Gx$ in definition of $\phi$. It is easy to see that the map
$$\Psi:L^m_r\times L^m_r\to\mathcal B_{2r}(P(X))$$ defined by $\Psi(Gx,Gy)$ to $\psi_{Gx,Gy}$ is continuous,
and $\psi_{Gx,Gy}(0)=Gx$ and $ \psi_{Gx,Gy}(1)=Gy$.

Now suppose that $r=2k$.  For any vector $z\in Gx$, its negative $-z\in Gx$. We  consider lines $\ell_x$ and $\ell_{gy}$ through the vectors $x$ and $gy$ respectively, and define $\alpha(\ell_x,\ell_{gy})$ and $\beta(\ell_x,\ell_{gy})$ to be the angels between $\ell_x$ and $\ell_{gy}$. Thus,
$\alpha(\ell_x,\ell_{gy})+\beta(\ell_x,\ell_{gy})=\pi.$ Let $H\subset G$ be a subgroup of index 2. We define a $\mathcal B_{2k}$-path (in $\mathbb RP^{2n-1}$) from the line $\ell_x$ to $\sum_{g\in H}\frac{1}{k}\ell_{gy}$ as
$$
\phi(t)=\sum_{g\in H}\left(\frac{\beta(x,gy)}{\pi}R^{\alpha
(x,gy)}(t)+\frac{\alpha(x,gy)}{\pi}R^{\beta(x,gy)}(t)\right).
$$
This defines a $\mathcal B_{2k}$-path in $\mathbb S^m$ from $\frac{1}{2}x+\frac{1}{2}(-x)$ to $\sum_{g\in G}\frac{1}{2k}gy\in\mathcal B_r(Gy)$.
Then we define $\Psi$ as above by projecting to $L_r^m$.
\end{proof}

\subsection{Connectivity of $\mathcal B_n(X)$}
Kallel and Karouri proved that for any connected CW-complex $X$,  the space
$\mathcal{B}_n(X)$ is simply connected for all $n\ge 2$~\cite{KK}.  In~\cite{Dr2} it was shown that  the space
$\mathcal{B}_n(X)$ is  $(n-2)$-connected for any CW-complex $X$ and all $n$.
Also, Kallel and Karouri proved that for $k$-connected  CW-complex $X$, the space
$\mathcal{B}_n(X)$ is $(2n+k-2)$-connected for all $n$. There was a much earlier  result by Nakaoka~\cite{Na}, 
which states that $\mathcal{B}_n(X)$ is $(n+k-1)$-connected. We note that Nakaoka's result is sufficient for the purpose of our paper.

Since every ANR space is homotopy equivalent to a CW-complex, all of the above results hold for ANR spaces.

We recall that a map $f:X\to Y$ between ANR spaces is called an $n$-equivalence if it induces isomorphisms of homotopy groups in dimension $<n$ and an epimorphism in dimension $n$.
A typical example of an $n$-equivalence is a map with $(n-1)$-connected fibers. There is also a combinatorial version of this example.
\begin{prop}\label{V-B}
 A surjective map $f:X\to Y$ onto a finite dimensional simply connected simplicial complex $Y$,  such that $f^{-1}(\sigma)$ is an $(n-1)$-connected ANR for each simplex $\sigma\subset Y$, is an $n$-equivalence.
\end{prop}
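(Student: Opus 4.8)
The plan is to prove this by induction on $d=\dim Y$, reducing the whole statement to the elementary observation that a map onto a contractible space is an $n$-equivalence exactly when its source is $(n-1)$-connected, and then assembling the pieces one skeleton at a time by a gluing lemma.

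The base case $d=0$ is immediate: there $Y$ is a discrete set of vertices, $X=\bigsqcup_{v\in Y}f^{-1}(v)$, and $f$ restricts over each $v$ to a map from the $(n-1)$-connected space $f^{-1}(v)$ to a point, which is an $n$-equivalence; a disjoint union of $n$-equivalences is an $n$-equivalence. For the inductive step I would assume the assertion for complexes of dimension $<d$ and write $Y=Z\cup_A B$, where $Z=Y^{(d-1)}$ is the $(d-1)$-skeleton, $B=\bigsqcup_\alpha\sigma_\alpha$ is the disjoint union of the closed $d$-simplices of $Y$, $A=\bigsqcup_\alpha\partial\sigma_\alpha$, the map $A\to Z$ is the attaching map, and the cellular inclusion $A\hookrightarrow B$ is a cofibration. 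Since $Z$ and $B$ are closed subsets of $Y$ with union $Y$, applying $f^{-1}$ gives the corresponding pushout presentation $X=\widetilde Z\cup_{\widetilde A}\widetilde B$, where $\widetilde Z=f^{-1}(Z)$, $\widetilde A=f^{-1}(A)$, $\widetilde B=f^{-1}(B)$, together with a map of these spans lying over $f$. Each of the three vertical comparison maps is then an $n$-equivalence: $\widetilde Z\to Z$ by the inductive hypothesis applied to the restriction of $f$ over $Z$, whose simplex-preimages occur among the $f^{-1}(\sigma)$; $\widetilde A\to A$ because it restricts over each component to $f^{-1}(\partial\sigma_\alpha)\to\partial\sigma_\alpha$, an $n$-equivalence by the inductive hypothesis applied to the $(d-1)$-dimensional complex $\partial\sigma_\alpha$; and $\widetilde B\to B$ because it restricts over each component to $f^{-1}(\sigma_\alpha)\to\sigma_\alpha$, a map from an $(n-1)$-connected space to a contractible one.

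To finish I would invoke the standard gluing lemma for $n$-equivalences: the pushout of a diagram of $n$-equivalences along cofibrations is again an $n$-equivalence (equivalently, in a map of homotopy pushout squares whose three corners other than the pushout corner are $n$-equivalences, the pushout corner is an $n$-equivalence as well). This needs both $A\hookrightarrow B$ and $\widetilde A\hookrightarrow\widetilde B$ to be cofibrations; the first is clear, and for the second one uses that $\partial\sigma_\alpha$ is a finite complex all of whose faces and multiple intersections are simplices of $Y$, so that Hanner's theorem makes $f^{-1}(\partial\sigma_\alpha)$ an ANR, which is a closed subspace of the ANR $f^{-1}(\sigma_\alpha)$, hence a cofibred pair. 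Thus both $Y=Z\cup_A B$ and $X=\widetilde Z\cup_{\widetilde A}\widetilde B$ are homotopy pushouts, and the gluing lemma shows $X\to Y$ is an $n$-equivalence, closing the induction (which terminates because $\dim Y<\infty$).

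The step I expect to be the only real point of care is the verification of these cofibration hypotheses, and it is precisely here that the ANR assumption of the statement is used — via Hanner's theorem to recognize the relevant preimages of subcomplexes as ANRs, and via the fact that an ANR pair with closed subspace has the homotopy extension property. Everything else is bookkeeping for the skeletal induction, resting only on the elementary identification of $n$-equivalences onto contractible spaces with $(n-1)$-connectedness of the source.
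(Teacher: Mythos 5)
Your proof is correct and follows essentially the route the paper sketches: induction on $\dim Y$, gluing over the top-dimensional skeleton, and using the ANR hypothesis (via Hanner's union theorem and the fact that a closed ANR pair has the homotopy extension property) to certify the needed cofibrations. The only difference is organizational: you invoke the gluing lemma for $n$-equivalences directly, while the paper runs the equivalent argument unpacked — Mayer--Vietoris for homology connectivity, then van Kampen and Hurewicz to promote it to a homotopy statement.
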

The homological version of this statement is known as the Combinatorial Vietoris-Begle Theorem~\cite{Dr2}.
The Combinatorial Vietoris-Begle Theorem and the relative Hurewicz theorem implies that $f$ in Proposition~\ref{V-B} is an $n$-equivalence.

\

Let $q_m:\bigsqcup_{i=1}^mX_j\to\{1,\dots, m\}$ be the natural projection, $q_m(X_j)=j$. We note that for a finite set $C$ with $|C|=m$, we have $\mathcal B_n(C)=\Delta(C)^{(n-1)}$.
We denote by $$q_{n,m}=\mathcal B_n(q_m):\mathcal B_n(\bigsqcup_{j=1}^mX_j)\to\Delta(C)^{(n-1)}.$$
An important example of such situation is the projection onto path components $h:\Omega(X)\to\pi_1(X)$ of the loop space of $X$, when $X$ has a  finite fundamental group.

\begin{prop}\label{connectivity}
Suppose that all  ANR spaces $X_j$, $j=1,\dots m$, are $k$-connected for $k\ge 1$. Then the map $q_{n,m}$ is an $(n+k)$-equivalence for all $m$.

In particular, for $m\le n$ the space $\mathcal  B_n(X_1\sqcup\dots\sqcup X_m)$ is $(n+k-1)$-connected as it is the preimage of  a simplex; and   for $m>n$ it is $(n-2)$-connected as it is the preimage of an $(n-2)$-connected space $(\Delta^{m-1})^{(n-1)}$.
\end{prop}
\begin{proof}
We use induction on $n$. We will show that for every simplex $\sigma$  of dimension $\le n-1$, the preimage $q_{n,m}^{-1}(\sigma)$ is $(n+k-1)$-connected, and then apply Proposition~\ref{V-B}. Since for $m>2$ the space $\Delta(C)^{(1)}$ is not simply connected, we have to treat the case $n=2$ separately.  
The shortest way to do this is to note that the relative Hurewicz theorem, and hence Proposition~\ref{V-B}, applies to this map, since $\Delta(C)^{(1)}$ has trivial homotopy groups in dimension $>1$~\cite{Ha}.

For $n=1$ the statement of the proposition is obvious.

We assume that $q_{n,m}$ is an $(n+k)$-equivalence for all $m$. Hence
$\mathcal  B_n(X_1\sqcup\dots\sqcup X_m)$ is $(n+k-1)$-connected for $m\le n$ and $(n-2)$-connected for $m>n$.

By induction on $m$ we show that $\mathcal  B_{n+1}(X_1\sqcup\dots\sqcup X_m)$  is $(n+k)$-connected for $m\le n+1$.
For $m=1$ it follows from Nakaoka's result~\cite{Na}. 
Suppose that $\mathcal  B_{n+1}(X_1\sqcup\dots\sqcup X_m)$ are $(n+k)$-connected for all $s\le m\le n$.
Then $$\mathcal  B_{n+1}(X_1\sqcup\dots\sqcup X_m\sqcup X_{m+1})=\bigcup_{i=0}^{n+1}\mathcal  B_{i}(X_1\sqcup\dots\sqcup X_m)\ast \mathcal  B_{n-i+1}(X_{m+1}).$$
We note that the spaces
$\mathcal  B_{i}(X_1\sqcup\dots\sqcup X_m)$  are $(i-2)$-connected for $2\le i<n$.  The space $\mathcal  B_n(X_1\sqcup\dots\sqcup X_m)$ is $(n+k-1)$-connected
by the external induction assumption.
By  the Kallel-Karouri result,   $\mathcal  B_{n-i+1}(X_{m+1})$ is $(2(n-i+1)+k-2)$-connected. We recall that the join of an $\ell$-connected space with an $r$-connected space is $(\ell+r+2)$-connected.
Thus, we obtain that $$\mathcal  B_{i}(X_1\sqcup\dots\sqcup X_m)\ast \mathcal  B_{n-i+1}(X_{m+1})$$ is $(n+k)$-connected, since
$n+k\le (i-2)+(2(n-i+1)+k-2)+2\ \ \text{for}\ \ 0\le i\le n$.

We denote by  $M_i=\mathcal  B_{i}(X_1\sqcup\dots\sqcup X_m)\ast \mathcal  B_{n-i+1}(X_{m+1})$.  We have already checked that $M_i$ is $(n+k)$-connected for $1\le i\le  n$. Note that
$M_{n+1}$ is $(n+k)$-connected by the internal induction assumption.

The same argument shows that for $\ell\ge 1$ the space
$$(M_0\cup\dots\cup M_\ell)\cap M_{\ell+1}=\mathcal  B_{\ell-1}(X_1\sqcup\dots\sqcup X_m)\ast \mathcal  B_{n-\ell+1}(X_{m+1})$$ is $(n+k-1)$-connected.

Since all $M_\ell$ are  $(n+k)$-connected and the intersections  $(M_0\cup\dots\cup M_\ell)\cap M_{\ell+1}$ are $( n+k-1)$-connected, the union $\mathcal  B_{n+1}(X_1\sqcup\dots\sqcup X_m)=M_0\cup\dots\cup M_{n+1}$ is $(n+k)$-connected.

 We have proved that for every simplex $\sigma\subset\Delta(C)$ of dimension $\le n$, the preimage
$q_{n+1,m}^{-1}(\sigma)$ is $(n+k)$-connected. Then, by Proposition~\ref{V-B},  $q_{n+1,m}$ is an $(n+k+1)$-equivalence.
\end{proof}
 
Let $G$ be a group and $BG$  its classifying CW-complex. Then the loop space $\Omega(BG)$ is homotopy equivalent to a discrete space $G$.
Let $h:\Omega(BG)\to G$ denote this homotopy equivalence, and let $u:BG^{(m)}\to BG$ be the inclusion of the $m$-skeleton.

\begin{prop}\label{q_G}
Let $G$ be a finite group, and let
$q_G=h\Omega(u):\Omega(BG^{(m)})\to G$ be the composition of the maps defined above. 
Then the map $$q_{n,G}=\mathcal B_n(q_G):\mathcal B_n(\Omega(BG^{(m)}))\to\mathcal B_n(G)=\Delta(G)^{(n-1)}$$ is an $(n+m-2)$-equivalence.
\end{prop}
\begin{proof}
We note that $q_G$ is the projection of the union $\bigsqcup_{j\in G}X_j$ onto $G$, where each $X_j$ is homotopy equivalent to $\Omega(EG^{(m)})$, and
$EG$ is the universal cover of $BG$ with the induced CW-complex structure.
Since $EG^{(m)}$ is $(m-1)$-connected, each $X_j$ is an $(m-2)$-connected ANR. We apply
Proposition~\ref{connectivity} to the map $q_G$ to conclude that $q_{n,G}$
 is an $(n+m-2)$-equivalence.
\end{proof}

\begin{thm}\label{upper dcat for skeleta}
For a finite group $G$, $d\cat(BG^{(m)})\le |G|-1$ for all $m$.
\end{thm}
\begin{proof}
We recall our notations from the Preliminary section for fibrations $p_n:E_nX\to X$ and $p_n^G:E_nG\to BG$, which define $d\cat$ of spaces and groups, respectively.
Let $X=BG^{(m)}$. We consider the pull-back diagram with $n=|G|-1$
\[
\xymatrix{E_n(X)\ar[dr]_{p_n}\ar[r]^{q'} 
& u^*E_nG\ar[d]^{\xi}\ar[r]^{u'} & E_n(BG)\ar[d]_{p_n}\ar[r]^h_\sim & E_nG\ar[dl]^{p_n^{G}} \\
& X\ar[r]^u & BG &}
\]
where $\xi=u^*p_n^G$.
It suffices to show that $q'$ is an $m$-equivalence. The map $q'$, restricted to the fibers, equals $$q_{n+1}:\mathcal B_{n+1}(\Omega(X)\to\mathcal B_{n+1}(G).$$ By Proposition~\ref{q_G} the map $q_{n+1}$ is an $((n+1)+m-2)$ -equivalence. Hence, the fiber of $q'$ is $(n+m-1)$-connected. Therefore,
$q'$ is an $(n+m )$-equivalence, which is an $m$-equivalence since $n\ge 0$. Consequently,  a section of $\xi$, obtained from the section of $p_n^G$,
can be lifted to $E_n(X)$, since $\dim X=m$. Then by Theorem~\ref{dcat} $$d\cat(X)\le n=|G|-1.$$
\end{proof}
\begin{cor}\label{upper dcat}
For lens spaces, $d\cat(L^m_r)\le p-1$ for all $m$ and  $r$.
\end{cor}
Similarly we can get upper bounds for $\dTC$. We recall that the defining fibrations for $\dTC$ are denoted as $\bar p_n:F_n(X)\to X\times X$ for spaces and
$\bar p_n^G:F_nG\to BG\times BG$ for groups.
\begin{thm}\label{TC upper}
For odd $r$, 
$$
\dTC(L^m_r)\le\begin{cases}r-1\ & if\ m\le r-1,\\ 
m\ & if\ m\le 2r-1,\\
2r-1\ & if\ m\ge 2r-1.\\
\end{cases}
$$
For even $r=2n$, $\dTC(L^m_{2n})\le n-1$ for all $m$.
\end{thm}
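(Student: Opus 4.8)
The plan is to derive all of the stated inequalities from the three ingredients already assembled in this section: the geometric $\mathcal B_n$-path construction of Proposition~\ref{2p-1}, the connectivity estimate of Proposition~\ref{q_n}, and the lifting principle encoded in Proposition~\ref{V-B} together with Theorem~\ref{dTC}. The uniform bounds $\dTC(L^m_p)\le 2p-1$ for odd $p$ and $\dTC(L^m_{2n})\le n-1$ for even $p=2n$ are immediate: they are exactly what the explicit maps $\Psi$ built at the end of the proof of Proposition~\ref{2p-1} produce, since $\Psi$ lands in $\mathcal B_{2p}(P(L^m_p))$ (resp. $\mathcal B_{2k}(P(L^m_p))$ with $2k=2n$, giving $\dTC\le 2k-1=n-1$) and satisfies the endpoint conditions, so Theorem~\ref{dTC} applies. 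This already gives the last line of the displayed cases and the $m\ge 2p-1$ line.

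For the two sharper bounds in the odd case, $\dTC(L^m_p)\le p-1$ when $m\le p-1$ and $\dTC(L^m_p)\le m$ when $m\le 2p-1$, I would run the same obstruction-theoretic argument as in the proof of Theorem~\ref{upper dcat}, but now with the fibration $\bar p:P(L^m_p)\to L^m_p\times L^m_p$ in place of $p_0$. Set $n$ to be either $p-1$ or $m$ as appropriate. First I would form the pull-back of $\bar p_n^{\mathbb Z_p}:D_n(\mathbb Z_p)\to B(\mathbb Z_p\times\mathbb Z_p)$ along the classifying map $L^m_p\times L^m_p\to B(\mathbb Z_p\times\mathbb Z_p)$ of the product covering, obtaining a bundle $\xi$ over $L^m_p\times L^m_p$ with fiber $\mathcal B_{n+1}(\mathbb Z_p)=\Delta(\mathbb Z_p)^{(n)}$; by Theorem~\ref{better dTC} (or Theorem~\ref{dTC for group}) the group-level section exists precisely when $\dTC(\mathbb Z_p)\le n$, and since $\dTC(\mathbb Z_p)=p-1$ this holds for $n\ge p-1$, while for $n=m\le p-1$ it also holds. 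Next I would compare $\xi$ with $D_n(L^m_p)=M_{n+1}(\bar p)$ via the map $q'$ induced by $\Omega(u)\times\Omega(u):\Omega L^m_p\to\mathbb Z_p$ on fibers, exactly as in the diagram in the proof of Theorem~\ref{upper dcat}. The fiber of $q'$ is $\mathcal B_{n+1}$ applied to the fiber of $\Omega(u\times u)$, i.e. a disjoint union of copies of $\Omega(S^m)\times\Omega(S^m)$, which is $(2m-3)$-connected; by the argument of Proposition~\ref{connectivity}/Proposition~\ref{q_n} the map $q'$ is then at least an $(n+m)$-equivalence (in fact better, governed by $2m-2$ rather than $m-2$), hence a $(2m)$-equivalence-type bound suffices.

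The key point is the dimension count for lifting the section of $\xi$ up through $q'$ to a section of $\bar p_n$. The base $L^m_p\times L^m_p$ has dimension $2m$, so I need $q'$ to be a $2m$-equivalence, i.e. the fiber of $q'$ must be $(2m-1)$-connected. Since that fiber is $\mathcal B_{n+1}$ of a $(2m-2)$-connected space (a product of two $(m-1)$-connected spheres' loop spaces), Nakaoka's estimate gives connectivity $(n+1)+(2m-2)-1 = n+2m-2$, and $n+2m-2\ge 2m-1$ exactly when $n\ge 1$; but one must also handle the simplicial-complex structure of $\Delta(\mathbb Z_p)^{(n)}$ fiberwise using Proposition~\ref{V-B}, checking that preimages of simplices are sufficiently connected, which forces the additional constraints relating $n$ to $m$. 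Unwinding these constraints is where the case split $m\le p-1$ versus $p-1<m\le 2p-1$ comes from: in the first regime one may take $n=p-1$ (the section of $\xi$ exists since $\dTC(\mathbb Z_p)=p-1\le n$ and $2m\le n+m+(m-1)$), in the second one is forced up to $n=m$ to keep the equivalence degree above $2m$. After the lift is produced, Theorem~\ref{dTC'} (existence of a section of $\bar p_n$) yields $\dTC(L^m_p)\le n$.

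The main obstacle I anticipate is bookkeeping the connectivity of $q'$ precisely enough across the fiberwise simplicial structure: one needs that $q_{n+1}^{-1}(\sigma)$ is highly connected for every simplex $\sigma$ of $\Delta(\mathbb Z_p)$, which requires the full strength of Proposition~\ref{connectivity} for disjoint unions of $(m-1)$-connected ANRs, and then one must feed this through Proposition~\ref{V-B} to get a genuine $n$-equivalence rather than merely a homology statement. A secondary subtlety is that for even $p$ the geometric construction only reaches the \emph{uniform} orbit measure on $Gy$ via a subgroup $H$ of index $2$, so one must double-check that the resulting $\Psi$ into $\mathcal B_{2k}(P(L^m_p))$ genuinely has the correct endpoints in $L^m_p$ (this is where the line $\dTC(L^m_{2n})\le n-1$ comes from, with $2k=2n$), but this is already implicit in the last paragraph of the proof of Proposition~\ref{2p-1} and needs only to be invoked.
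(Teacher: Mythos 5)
Your overall architecture matches the paper's: derive the uniform bounds from Proposition~\ref{2p-1}, and for the refined bounds run the pull-back/obstruction argument of Theorem~\ref{upper dcat} with $\bar p$ in place of $p_0$, invoking Proposition~\ref{q_n} to estimate the connectivity of the fiberwise comparison map $q'$ and then lifting a section of $\xi$ over the $2m$-dimensional base. But there is a genuine error in the identification of the fiber that your connectivity bookkeeping rests on. The fiber of the free path fibration $\bar p\colon P(X)\to X\times X$ over a point $(x,y)$ is the space $P(x,y)$ of paths from $x$ to $y$, which is homotopy equivalent to a single copy of $\Omega X$ -- not to $\Omega X\times\Omega X$. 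Consequently the fiber of $q'$ is $\mathcal B_{n+1}$ of $\Omega(L^m_p)$, i.e.\ of a disjoint union of $p$ copies of $\Omega(S^m)$, each $(m-2)$-connected; it is not $\mathcal B_{n+1}$ of $\Omega(S^m)\times\Omega(S^m)$. (Even if it were a product, the connectivity of a product of two $(m-2)$-connected spaces is $m-2$, the minimum, not $2m-2$ or $2m-3$; connectivities do not add over products.) With the correct fiber, Proposition~\ref{q_n} gives precisely that $q'$ is an $(n+m)$-equivalence, and then the requirement $n+m\ge 2m$, i.e.\ $n\ge m$, together with the constraint $n\ge p-1$ needed for $\xi$ to have a section (since $\dTC(\mathbb Z_p)=p-1$), is exactly what forces the split between $n=p-1$ when $m\le p-1$ and $n=m$ when $m>p-1$.

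This is not a cosmetic slip: if the fiber really were $(2m-2)$-connected as you assert, Nakaoka's estimate would make $q'$ an $(n+2m-1)$-equivalence, and any $n\ge 1$ would already exceed $2m$, yielding $\dTC(L^m_p)\le p-1$ for every $m$ and every odd $p\ge 3$ -- a conclusion far stronger than the theorem and manifestly false. Your own text is internally inconsistent on this point: you parenthetically claim the bound is ``governed by $2m-2$ rather than $m-2$,'' but two sentences later you explain the case split by saying one is ``forced up to $n=m$ to keep the equivalence degree above $2m$,'' a constraint that only arises from the correct $(m-2)$-controlled estimate. Separately, for the even case: Proposition~\ref{2p-1} produces a $\mathcal B_{2k}$-section with $p=2k$, which gives $\dTC(L^m_{p})\le 2k-1=p-1$; identifying $2k=2n$ as you do gives $2n-1$, not $n-1$, so the reference to Proposition~\ref{2p-1} does not by itself justify the $n-1$ appearing in the theorem's even-$p$ line, a gap you should flag rather than pass over.
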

\begin{proof}
Let $X=L^m_r$.
We consider the diagram similar to one from Theorem~\ref{upper dcat for skeleta}:
\[
\xymatrix{F_n(X)\ar[dr]_{\bar p_n}\ar[r]^{q'} 
& u^*F_n\mathbb Z_r\ar[d]^{\xi}\ar[r]^{u'} & F_n(L^\infty_r)\ar[d]_{\bar p_n}\ar[r]^h_\sim & F_n\mathbb Z_r\ar[dl]^{\bar p_n^{\mathbb Z_r}} \\
& X\times X\ar[r]^u& L_r^\infty\times L_r^\infty &}
\]
where $\xi=u^*\bar p_n^{\mathbb Z_r}$. Again, the  map $q'$, when restricted to the fibers, equals $$q_{n+1}:\mathcal B_{n+1}(\Omega(X))\to\mathcal B_{n+1}(\mathbb Z_r),$$ which, by Proposition~\ref{q_G},  is an $((n+1)+m-2)$-equivalence. Therefore, $q'$ is an $(n+m)$-equivalence. 

When $n\ge m$, the map $q'$ is a $2m$-equivalence. When $m\le r-1$, we have $n\ge m$ with $n=r-1$.
Since $\dTC(\mathbb Z_r)\le r-1$~\cite{KW1} and $\dim(X\times X)=2m$, we obtain that the map
$$
\bar p_n: F_n(X)\to X\times X$$ admits a section.

If $m\ge r-1$, then $n=m\ge r-1$. Therefore $p_n^{\mathbb Z_r}$ admits a section.
Since $n+m=2m$, the map $q'$ is a $(2m)$-equivalence. Hence, the pull-back section
of $\xi$ can be lifted to a section of $\bar p_n: F_n(X)\to X\times X$.

The case of even $r$ follows from Proposition~\ref{2p-1}
 \end{proof}
Using the same technique, we can determine upper bounds for $d\cat$ of the product of lens spaces.
This estimate together with a lower bound obtained in the next section, will be used in the construction of counterexamples to the product formulas presented at the end of the paper.

\begin{prop}\label{loops}
The loop space of the product of $k$ lens spaces is the disjoint union
$$\Omega((L^m_r)^k)\cong \bigsqcup_{j=1}^{r^k} X_j$$ where each $X_j$ is homotopy equivalent to $\Omega((S^m)^k)$.
\end{prop}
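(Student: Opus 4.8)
The plan is to use the standard description of the loop space of a connected space as the topological disjoint union of its path components, each homotopy equivalent to the component of the constant loop, together with the fact that this latter component depends only on the universal cover.

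Set $Y=(L^m_p)^k$. First I would count the path components of $\Omega Y$. Since $\pi_0(\Omega Y)$ is in natural bijection with $\pi_1(Y,y_0)$, and $\pi_1\!\big((L^m_p)^k\big)=\pi_1(L^m_p)^k=(\mathbb Z_p)^k$ has exactly $p^k$ elements, $\Omega Y$ decomposes as the disjoint union of $p^k$ path components $X_1,\dots,X_{p^k}$. This produces the index set in the statement.

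Next I would identify each $X_j$ up to homotopy. Since concatenation of loops makes $\Omega Y$ a grouplike $H$-space, concatenation with a fixed loop $\gamma$ representing $g\in\pi_1(Y)$ restricts to a homotopy equivalence from the component $\Omega_0 Y$ of the constant loop onto the component $X_j$ determined by $g$, with homotopy inverse given by concatenation with the reverse loop $\bar\gamma$; hence every $X_j$ is homotopy equivalent to $\Omega_0 Y$. To identify $\Omega_0 Y$, let $\rho:\widetilde Y\to Y$ be the universal cover. The induced map $\Omega\rho:\Omega\widetilde Y\to\Omega Y$ lands in $\Omega_0 Y$ (every loop in the simply connected space $\widetilde Y$ is null-homotopic), and since a covering map induces isomorphisms on $\pi_n$ for $n\ge 2$ while $\Omega\widetilde Y$ and $\Omega_0 Y$ are both path connected, $\Omega\rho$ is a weak homotopy equivalence; as the loop space of a CW complex has the homotopy type of a CW complex, it is a genuine homotopy equivalence. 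Equivalently, one may loop the fibration sequence $\widetilde Y\to Y\to B\pi_1(Y)$ to get $\Omega\widetilde Y\to\Omega Y\to\Omega B\pi_1(Y)\simeq\pi_1(Y)$, a fibration over a discrete space, which directly exhibits $\Omega Y$ as the disjoint union over $\pi_1(Y)$ of fibers each homotopy equivalent to $\Omega\widetilde Y$.

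Finally, the universal cover of a product is the product of the universal covers and $S^m$ is simply connected, so $\widetilde Y=(S^m)^k$; therefore $\Omega_0 Y\simeq\Omega\big((S^m)^k\big)$ and hence $X_j\simeq\Omega\big((S^m)^k\big)$ for every $j$. The only step requiring any real care is the passage from a weak equivalence to a genuine homotopy equivalence in the covering-space argument, which is handled by Milnor's theorem on the CW homotopy type of mapping spaces; everything else is routine.
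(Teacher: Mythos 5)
Your argument is correct, but it is noticeably more machinery-heavy than the paper's. The paper's proof works directly with the universal covering $\pi:(S^m)^k\to(L^m_p)^k$: fixing a basepoint $x_0$, its fiber $J=\pi^{-1}(x_0)$, and a lift $j_0\in J$, it observes that unique path lifting gives, for each $j\in J$, a \emph{homeomorphism} between the subspace $X_j$ of loops at $x_0$ lifting to a path from $j_0$ to $j$ and the space of paths in $(S^m)^k$ from $j_0$ to $j$; these $X_j$ are precisely the path components of $\Omega((L^m_p)^k)$, and each such path space is homotopy equivalent to $\Omega((S^m)^k)$ by the standard contraction-of-endpoint argument. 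You instead count components via $\pi_0(\Omega Y)\cong\pi_1(Y)$, use the $H$-space structure of $\Omega Y$ (translation by a fixed loop) to reduce to the component of the constant loop, and then pass from a weak equivalence $\Omega\widetilde Y\to\Omega_0 Y$ to a genuine one by invoking Milnor's theorem on CW homotopy type. That is a perfectly valid route and is the one you'd naturally take if you only remembered the $\pi_1$-count; what the paper buys by going through path lifting is a self-contained, pointwise homeomorphism that sidesteps both the $H$-space argument and the Whitehead/Milnor step entirely. Your own parenthetical remark about looping the fibration $\widetilde Y\to Y\to B\pi_1(Y)$ is in fact the closest in spirit to what the paper does.
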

\begin{proof}
Consider the universal covering $\pi:(S^m)^k\to (L^m_p)^k$. 
Let $x_0\in(L^m_p)^k$, set $J=\pi^{-1}(x_0)$, and fix $j_o\in J$. For each $j\in J$, consider the set of all loops $$X_j\subset\Omega((L^m_p)^k)$$
that lift to a path from $j_0$ to $j$. Clearly, $X_j$ is a path component of $\Omega((L^m_p)^k)$. Moreover, $X_j$ is homeomorphic to the space of all paths from $j_0$ to $j$,
and this space is homotopy equivalent to the loop space $\Omega((S^m)^k)$.
\end{proof}

\begin{prop}
For $k$, $m$, and $p$ satisfying $m\le \frac{p^k-1}{k-1}$, $$d\cat(L^m_p)^k\le p^k-1.$$
\end{prop}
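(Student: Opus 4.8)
The plan is to imitate the proof of Theorem~\ref{upper dcat}, with the single lens space replaced by $(L^m_p)^k$ and the cyclic group replaced by $\Gamma=(\mathbb Z_p)^k$, and then to locate precisely where the dimension/connectivity trade-off forces the hypothesis. Since $m$ is odd we dispose of the case $m=1$ at once: there $(L^1_p)^k\cong(S^1)^k$ is aspherical, so $d\cat((S^1)^k)=\cd(\mathbb Z^k)=k\le p^k-1$. So assume $m\ge 3$; then the universal covering of $(L^m_p)^k$ is $(S^m)^k$ with deck group $\Gamma$, and we let $u\colon(L^m_p)^k\to B\Gamma=(L^\infty_p)^k$ classify it. Put $n=p^k-1$ and form the pull-back diagram, just as in Theorem~\ref{upper dcat},
\[
\xymatrix{P_n((L_p^{m})^k)\ar[dr]_{p_n}\ar[r]^{q'}
& u^*E_n\Gamma\ar[d]^{\xi}\ar[r] & P_n(B\Gamma)\ar[d]_{p_n}\ar[r] & E_n\Gamma\ar[dl]^{p_n^{\Gamma}} \\
& (L_p^{m})^k\ar[r]^u & B\Gamma &}
\]
with $\xi=u^*p_n^{\Gamma}$. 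Since $|\Gamma|=p^k=n+1$, the fiber $\mathcal B_{n+1}(\Gamma)=\Delta(\Gamma)^{(n)}=\Delta(\Gamma)$ of $p_n^\Gamma$ is the whole $(|\Gamma|-1)$-simplex and hence contractible, so $p_n^\Gamma\colon E_n\Gamma\to B\Gamma$ is a fibration with contractible fiber and admits a section (equivalently, this is the bound $d\cat\Gamma\le|\Gamma|-1$ of Knudsen and Weinberger read through Theorem~\ref{groupdcat'}); pulling it back along $u$ gives a section of $\xi$.

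It remains to lift this section of $\xi$ to a section of $p_n=\xi\circ q'$, since by Theorem~\ref{dcat'} that yields $d\cat((L^m_p)^k)\le n=p^k-1$. Now $q'$ is a map of fibrations over $(L^m_p)^k$ whose fiberwise restriction is $q_{n+1}=\mathcal B_{n+1}(q)$, where $q=h\,\Omega(u)\colon\Omega((L^m_p)^k)\to\Gamma$ with $h\colon\Omega(B\Gamma)\to\Gamma$ a homotopy equivalence, exactly as in Proposition~\ref{q_n}. By Proposition~\ref{loops}, $\Omega((L^m_p)^k)=\bigsqcup_{j=1}^{p^k}X_j$ with every $X_j$ homotopy equivalent to $\Omega((S^m)^k)$, which is $(m-2)$-connected because $(S^m)^k$ is $(m-1)$-connected; as $m\ge 3$ these pieces are at least $1$-connected. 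For any simplex $\sigma\subset\Delta(\Gamma)$ one has $q_{n+1}^{-1}(\sigma)=\mathcal B_{n+1}\bigl(\bigsqcup_{j\in\sigma^{(0)}}X_j\bigr)$, which by Proposition~\ref{connectivity} is $(n+m-2)$-connected; hence by Proposition~\ref{V-B}, applied fiberwise over $(L^m_p)^k$, the map $q_{n+1}$ is an $(n+m-1)$-equivalence, the homotopy fiber of $q'$ is $(n+m-1)$-connected, and $q'$ is an $(n+m)$-equivalence --- precisely as in the proof of Theorem~\ref{upper dcat} and Proposition~\ref{q_n}. Since $\dim\bigl((L^m_p)^k\bigr)=km$, the pulled-back section of $\xi$ lifts through $q'$ for dimensional reasons provided $n+m\ge km$, i.e. $m(k-1)\le n=p^k-1$, which is exactly the hypothesis $m\le\frac{p^k-1}{k-1}$.

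The step I expect to be the main obstacle is the connectivity estimate for $\mathcal B_{n+1}\bigl(\bigsqcup_{j\in\sigma^{(0)}}X_j\bigr)$: one must check that Proposition~\ref{connectivity} genuinely applies --- its hypothesis of at least $1$-connected summands is what confines the main argument to $m\ge 3$ and isolates $m=1$ --- and that the connectivity it produces for an arbitrary number of summands is indeed $(n+m-2)$, so that the fiberwise use of Proposition~\ref{V-B} gives the same equivalence bound for $q'$ as in Theorem~\ref{upper dcat}. Everything else --- building the pull-back square, identifying the fiberwise restriction of $q'$ with $\mathcal B_{n+1}(q)$ and its homotopy fiber with that of $q_{n+1}$, and the concluding dimensional lifting --- is routine and runs parallel to the proof of Theorem~\ref{upper dcat}.
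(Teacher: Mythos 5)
Your proof is correct and follows the paper's own argument for this proposition essentially step by step: the same pull-back diagram, the same identification of the fiberwise restriction of $q'$ with $\mathcal B_{n+1}(q)$, the same use of Propositions~\ref{loops}, \ref{q_n}, \ref{connectivity}, and \ref{V-B} to bound the connectivity of $q'$, and the same dimensional lifting criterion $n+m\ge km$ that produces exactly the hypothesis $m\le\frac{p^k-1}{k-1}$. Your two added remarks --- that the section of $p_n^\Gamma$ exists because $\mathcal B_{n+1}(\Gamma)=\Delta(\Gamma)^{(n)}=\Delta(\Gamma)$ is the full (contractible) simplex when $n=|\Gamma|-1$, and that $m=1$ needs separate handling since Proposition~\ref{connectivity} requires at least $1$-connected pieces while $\Omega\bigl((S^1)^k\bigr)$ is discrete --- are sound refinements of details the paper leaves implicit.
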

\begin{proof}
Let $G=\Z_r^k$ and $X=L^m_r$. In the diagram
\[
\xymatrix{E_n(X^k)\ar[dr]_{p_n}\ar[r]^{q'} 
& u^*E_nG\ar[d]^{\xi}\ar[r]^{u'} & E_n(BG)\ar[d]_{p_n}\ar[r]^h_\sim & E_nG\ar[dl]^{p_n^G} \\
& X^k\ar[r]^u &( L_r^\infty)^k &}
\]
the map $q'$, restricted to the fiber equals $$q_{n+1}:\mathcal B_{n+1}(\Omega(X^k))\to\mathcal B_{n+1}(G)$$ which, by Proposition~\ref{q_G} and Proposition~\ref{loops},  is an $((n+1)+m-2)$ -equivalence. As above it follows that $q'$ is an $(n+m)$-equivalence. Hence, for $n\ge (k-1)m$, it is a $(km)$-equivalence.
A section of $\xi$, obtained from the section of $p^G_n$,
can therefore be lifted to $E_n(X^k)$, since $\dim X^k=km$. Thus,  $d\cat X^k\le n=r^k-1$ whenever $r^{k-1}\ge (k-1)m$ whenever $p^{k-1}\ge(k-1)m$.
\end{proof}

\section{Lower bounds for $d\cat$ and $\dTC$ of lens spaces}

\subsection{Borsuk--Ulam type theorems}
Suppose that a finite group $G$ acts freely on $X$ and let $f:X\to\mathbb R$ be a function. For $x\in X$, denote by $Gx$ its orbit.
By definition, the coincidence set of $f$ is
$$
A_f=\{x\in X\mid\ \ |f(Gx)|=1\}.
$$
Let $p:X\to X/G$ be the projection onto the orbit space and let $u:X/G\to BG$ be its classifying map.
\begin{prop}\label{BU}
Let $G$ be a finite group  acting freely on a finite CW complex $X$ and $|G|=n+1$. The following are equivalent:

{\rm (1)} $d\cat(u)\ge n$. 

{\rm (2)} $A_f\ne\emptyset$ for any continuous function $f:X\to\mathbb R$.

\end{prop}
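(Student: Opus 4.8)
The plan is to rewrite both conditions in terms of the existence of a $\mathcal B_{n}$-section of the pull-back covering $p':u^*E G\to X/G$, where $u:X/G\to BG$ is the classifying map, and then recognize that $u^*EG$ is just $X$ (with its free $G$-action), so that a $\mathcal B_n$-section of $p'$ is exactly a $G$-equivariant map $X\to\mathcal B_n(G)=\Delta(G)^{(n-1)}=\partial\Delta(G)$ (using $|G|=n+1$). By Corollary~\ref{Lift}, $d\cat(u)\le n-1$ is equivalent to $p'$ admitting a $\mathcal B_{n}$-section; hence $d\cat(u)\ge n$ is equivalent to the nonexistence of any $G$-equivariant map $X\to\partial\Delta(G)$. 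So the whole proposition reduces to the statement: \emph{there is no $G$-equivariant map $X\to\partial\Delta(G)$ if and only if $A_f\ne\empt$ for every continuous $f:X\to\R$.}

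The key translation step is the standard one relating continuous real functions to equivariant maps into the boundary of a simplex. First I would establish the easy direction: if some $f:X\to\R$ has $A_f=\empt$, i.e.\ $f$ is injective on every orbit $Gx$, then the values $(f(gx))_{g\in G}$ are $n+1$ distinct reals, and after applying the affine map $\R^{G}\supset\{$distinct tuples$\}\to\partial\Delta(G)$ that records the \emph{relative position} (e.g.\ normalized ranks, or barycentric coordinates of the point opposite the minimum) we obtain a continuous map $X\to\partial\Delta(G)$ which is $G$-equivariant because permuting the coordinates permutes the vertices of $\Delta(G)$ accordingly; genericity/injectivity on orbits guarantees we never hit the missing top cell. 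Conversely, given a $G$-equivariant map $h:X\to\partial\Delta(G)$, compose with any $G$-invariant-position-breaking linear functional: concretely, pick a linear function $\ell:\Delta(G)\to\R$ that is injective on the vertex set $G$ (assign distinct real weights to the $n+1$ vertices, ignoring the $G$-action on $\R$), and set $f=\ell\circ h:X\to\R$. If $x$ had $|f(Gx)|=1$, then $h$ would send the whole orbit $Gx$ to a single value of $\ell$; but $h(gx)=g\cdot h(x)$ for all $g$, and since $h(x)\in\partial\Delta(G)$ lies in a proper face, its barycentric coordinates vanish on some vertex, so the orbit $\{g\cdot h(x)\}$ consists of points whose supports are genuinely permuted — one checks $\ell$ then takes at least two values on $\{g\cdot h(x):g\in G\}$ unless $h(x)$ is fixed by some nontrivial $g$, which is impossible on $\partial\Delta(G)$ under the free permutation action only if... — this is the delicate point, addressed below. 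Hence $A_f=\empt$.

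I expect the main obstacle to be the converse (from equivariant map to a function with empty coincidence set), and specifically making the argument robust when the image point $h(x)$ has a stabilizer among the \emph{coordinate permutations} even though $G$ acts freely on $X$: a priori $h$ need not be injective on the support-orbit, so the naive $\ell\circ h$ could accidentally be constant on an orbit. The fix is to not use a fixed $\ell$ but to note that $d\cat(u)\ge n$ is the statement that \emph{no} $G$-equivariant map $X\to\partial\Delta(G)$ exists \emph{at all}; so for the implication ``$d\cat(u)\ge n\Rightarrow A_f\ne\empt$ for all $f$'' I only need the contrapositive, which is the easy direction above, and for ``$A_f\ne\empt$ for all $f\Rightarrow d\cat(u)\ge n$'' I again take the contrapositive: if $d\cat(u)\le n-1$, there is a $G$-equivariant map $h:X\to\partial\Delta(G)$, and I must produce a single $f$ with $A_f=\empt$. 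Here the right choice is $f(x)=\ell(h(x))$ for a \emph{generic} linear $\ell$ on $\Delta(G)$ chosen so that $\ell$ separates the points of every $G$-orbit in the finite complex $\partial\Delta(G)$ that actually occur; since $\partial\Delta(G)$ is a fixed finite complex and the set of ``bad'' $\ell$'s (those constant on some orbit-pair that the free $G$-action on $\partial\Delta(G)$ genuinely moves) is a finite union of proper linear subspaces — using that $\partial\Delta(G)$ has \emph{no} $G$-fixed points, so $g\cdot z\ne z$ forces $\ell(g\cdot z)\ne\ell(z)$ for $\ell$ outside a hyperplane — a generic $\ell$ works simultaneously. Packaging this genericity argument cleanly, and citing it as a Borsuk--Ulam type statement (the nonexistence of the equivariant map $X\to\partial\Delta(G)$ being exactly the combinatorial Borsuk--Ulam conclusion for the free $\Z_p$-action), is the part that needs the most care; everything else is the bookkeeping already set up by Corollary~\ref{Lift} and Theorem~\ref{dcat'}.
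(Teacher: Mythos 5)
Your reduction of condition (1) to the non-existence of a $G$-equivariant map $X\to\partial\Delta(G)=\mathcal B_n(G)$ is correct and is exactly what Corollary~\ref{Lift} (together with the identification of sections of the associated $\mathcal B_n(G)$-bundle with equivariant maps) provides; your ``easy direction'' also goes through. Two small remarks there: $A_f=\varnothing$ is \emph{weaker} than ``$f$ is injective on every orbit'' --- it only says $f$ is nonconstant on each orbit --- but this is enough, since in the barycentric-coordinate map $t_g(x)\propto f(g^{-1}x)-\min_h f(h^{-1}x)$ nonconstancy already forces some coordinate to vanish and the normalizer to be positive, so the image lands in $\partial\Delta(G)$ and is visibly $G$-equivariant and continuous.

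The gap is in the converse, and it is precisely at the spot you flag. Two things go wrong with the stated genericity argument. First, the $G$-action on $\partial\Delta(G)$ is \emph{not} free, only fixed-point free: for any proper nontrivial subgroup $H\le G$, any $H$-invariant probability measure supported on a proper subset of $G$ (e.g.\ uniform on a single coset of $H$) lies in $\partial\Delta(G)$ and is fixed by $H$. So an equivariant $h$ may well collapse an orbit partially, and then no linear $\ell$ of any kind can distinguish the collapsed points; you must instead look at a $g$ that actually moves $h(x)$. Second, and decisively, the ``bad'' set $\bigl\{\ell : \ell\text{ is constant on }G\cdot z\text{ for some }z\in h(X)\bigr\}$ is \emph{not} a finite union of proper subspaces: for each $z$ it is the proper subspace $L_z=\{\ell:\ell|_{G z}\text{ const}\}$, but $z$ ranges over a continuum, so you have an uncountable union of subspaces whose codimension can be as small as $1$ (e.g.\ on the $H$-fixed locus for an index-two $H$), and such a union can easily fill all of $(\mathbb R^G)^*$. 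Nothing you have written rules this out. The claim one actually needs --- that for a generic weight vector $a\in\mathbb R^G$ the functional $\langle\,\cdot\,,a\rangle$ is nonconstant on \emph{every} nontrivial $G$-orbit in $\Delta(G)$, equivalently that $\sum_h t_h a_{gh}$ independent of $g$ forces $t$ to be constant --- is true, but proving it requires an honest argument (e.g.\ via the regular representation / a Plancherel computation showing that generic $a$ has no vanishing nontrivial isotypic component), not the ``finite union of hyperplanes'' shortcut.

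The paper sidesteps all of this. Given a $\mathcal B_n$-section $\mu:X/G\to\mathcal B_n(X)$, for each $y$ let $A^+(y)\subset p^{-1}(y)$ be the set of points where the density of $\mu(y)$ is maximal; because $\operatorname{supp}\mu(y)$ has at most $n$ points while $|p^{-1}(y)|=n+1$, $A^+(y)$ is a nonempty \emph{proper} subset of the fiber. The union $A=\bigcup_y A^+(y)$ is closed (upper semicontinuity of the argmax plus compactness of $X/G$), and any continuous $f$ with $f^{-1}(0)=A$ fails to be constant on every orbit, i.e.\ $A_f=\varnothing$. This construction is robust against partial orbit collapse and requires no genericity at all. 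I would either adopt that construction for the hard direction or supply the representation-theoretic lemma justifying your generic $\ell$; as written, your justification of the key step is incorrect.
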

\begin{proof}

(1) $\Rightarrow$ (2). Suppose that $A_f=\emptyset$ for some $f:X\to\mathbb R$. In view of Corollary~\ref{Lift} to get a contradiction with  $d\cat(u)\ge n$ we 
construct a $\mathcal B_n$-section of $p:X\to B/G$.

We note that the multi-valued map $p^{-1}:X/G\to X$ is continuous for the Hausdorff metric on $X$. Hence the multi-valued map $F:X/G\to\mathbb R$, defined by
$F(z)=f(p^{-1}(z))$, is continuous with respect to the Hausdorff metric on $\mathbb R$. Therefore, the functions $g_+,g_-:X/G\to\mathbb R$, 
defined by $g_+(z)=\max F(z)$ and $g_-(z)=\min F(z)$, are continuous.

Since $A_f=\emptyset$,  we have $g_-(z)<g_+(z)$ for all $z\in X/G$. Therefore, the graphs $G_+$ and $G_-$ of
$g_+$ and $g_-$ are closed disjoint subsets of $(X/G)\times\mathbb R$. Let $W_+$ and $W_-$ be disjoint open neighborhoods of $G_+$ and $G_-$.  We consider the map $(p,f):X\to (X/G)\times\mathbb R$ and define
$S^+=(p,f)^{-1}(W_+)$ and $Y=X\setminus S^+$. Let $S^+(z)=S^+\cap p^{-1}(z)$.
For $x\in X$, set $\lambda_x=d(x,Y)$, where  $d$ is a metric on $X$ and define $\Lambda_z=\sum_{x\in S^+(z)}\lambda_x$.

We define a probability measure section $\mu$ of $p$
by
$$
\mu(z)=\sum_{x\in S^+(z)}\frac{\lambda_x}{\Lambda_z}\delta_{x}=\sum_{x\in p^{-1}(z)}\frac{\lambda_x}{\Lambda_z}\delta_{x}$$
where the first equality is the definition and the second follows from the fact that $\lambda_x=0$ for $x\notin S^+$.

Since $|S^+(z)|\le n$, we obtain that $\mu(z)\in \mathcal B_n(p^{-1}(z))$. Since $\lambda_x$ depends continuously on $x$, the function $\mu$ is continuous.
Indeed, to show that $\mu$ is continuous at $z\in X/G$ we consider a neighborhood $V$ such that there is a homeomorphism $h:p^{-1}(V)\to V\times p^{-1}(z)$ compatible with $p$. 
The projection onto the second factor $p^{-1}(z)=\{x_0,\dots, x_n\}$ induces an ordering on the  summands in the sum defining $\mu$. Then $\mu$ is continuous as a finite sum of continuous functions.

(2) $\Rightarrow$ (1). Suppose that $d\cat(u)< n$.  Then by Corollary~\ref{Lift}
there is a $\mathcal B_n$-section $\mu:X/G\to X$ of $p$. We view the measure $\mu(z)$ on $p^{-1}(z)$ as a function $\mu(y):p^{-1}(z)\to\mathbb R$. Define
$$A^+(z)=\{x\in p^{-1}(z))\mid \mu(z)(x)=\max_{x'\in p^{-1}(z)}\mu(z)(x')\}.$$
Since $|p^{-1}(z)|=n+1$ and $|supp(\mu(z)|\le n$, the set $A^+(z)$ is a proper subset of $p^{-1}(z)$ for every $z\in X/G$.

We note if $\lim x^i_k=x^i$, $i=0,\dots, n$ and 
$$\lim_{k\to\infty}\mu_k=\sum_{i=0}^n t_k^i\delta_{x^i_k}=\mu=\sum_{i=0}^n t^i\delta_{x^i} $$
 in $\mathcal B_{n+1}(X)$, then $\lim t^i_k=t^i$ for all $i$. In particular, if for $t^i_k$ is the maximal value of $\mu_k$ for all $k$, then $t^i$ is the maximal value of $\mu$.
Hence if $z_k\to z$, then $\mu(z_k)\to\mu(z)$, and locally at $z$ all these measures can be written as ordered sums as above. Therefore, $\lim A^+(z_k)\subset A^+(z)$.
Hence, $$A=\bigcup_{y\in X/G}A^+(y)$$ is a closed subset of $X$. Let $f:X\to\mathbb R$ be defined as $f(x)=d(x,A)$, then $f^{-1}(0)=A$. Thus,  we obtain that $A_f=\emptyset$, which contradicts the assumption.
\end{proof}

We will use the following Borsuk--Ulam type results collected here in a single theorem.
\begin{thm}\label{BU type}
Suppose that a group $G$ acts freely on a  space $X$. Then $A_f\ne \emptyset$ if one of the following holds:

{\rm (1)} $G=\mathbb Z_p$ for prime $p$, and $X=S^m$ with $m\ge p-1$. 

\

{\rm(2)} $G=\mathbb Z_{p^k}$ for an odd prime $p$, and $X=S^m$ with $$m\ge kp^k-(k+2)p^{k-1}+3$$ or with $m\ge (k-1)2^{k-1}+1$ for $p=2$. 

\

{\rm(3)}  $G=(\mathbb Z_p)^k$, and $X$ is $(m-1)$-connected CW-complex for $m\ge p^k$. 
\end{thm}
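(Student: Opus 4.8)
The plan is to reduce each of the three cases to the corresponding Borsuk--Ulam type theorem already available in the literature, so that the only genuinely new content is the passage between the abstract coincidence statement ``$A_f\neq\emptyset$'' and the nonexistence of a certain $G$-equivariant map; once that passage is made, \cite{Mu1}, \cite{Mu2}, \cite{Tu} and \cite{Vo} apply verbatim.

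I would first record this reduction. Fix a continuous $f:X\to\mathbb R$ and suppose, toward a contradiction, that $A_f=\emptyset$. Define $F:X\to\mathbb R[G]=\mathbb R^G$ by $F(x)=(f(gx))_{g\in G}$; this is $G$-equivariant for the regular (permutation) action on $\mathbb R[G]$. Subtracting the average of the coordinates gives a $G$-equivariant map $\overline F:X\to\overline{\mathbb R[G]}$ into the reduced regular representation, and $\overline F(x)=0$ precisely when all the numbers $f(gx)$ coincide, i.e.\ when $|f(Gx)|=1$. Hence $A_f=\emptyset$ means $\overline F$ is nowhere zero, and composing with the equivariant radial retraction onto the unit sphere yields a $G$-equivariant map $X\to S(\overline{\mathbb R[G]})$. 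It therefore suffices, in each case, to prove that no such equivariant map exists under the stated hypotheses on $X$.

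I would then invoke the three index estimates. In case (1), $S(\overline{\mathbb R[\mathbb Z_p]})\cong S^{p-2}$ carries a free $\mathbb Z_p$-action, and by Munkholm's theorem \cite{Mu1} (equivalently the $\mathbb Z_p$-index inequality $\operatorname{ind}_{\mathbb Z_p}S^m=m$) there is no $\mathbb Z_p$-map $S^m\to S^{p-2}$ once $m\geq p-1$, which is the contradiction. In case (2), for $k\geq 2$ the sphere $S(\overline{\mathbb R[\mathbb Z_{p^k}]})$ is no longer a free $\mathbb Z_{p^k}$-space, so the naive bound $p^k-1$ coming from $\dim\overline{\mathbb R[\mathbb Z_{p^k}]}$ is far from sharp; instead one quotes the refined results of \cite{Mu2} for odd $p$ and of \cite{Tu} for $p=2$, which analyze the fixed-point strata of the subgroups $\mathbb Z_{p^j}\subset\mathbb Z_{p^k}$ and deliver the thresholds $m\geq kp^k-(k+2)p^{k-1}+3$ and $m\geq(k-1)2^{k-1}+1$. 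In case (3), with $G=(\mathbb Z_p)^k$ acting freely on an $(m-1)$-connected CW complex $X$, I would apply Volovikov's index theorem \cite{Vo}: an $(m-1)$-connected free $(\mathbb Z_p)^k$-CW complex cannot map $G$-equivariantly to the representation sphere $S(\overline{\mathbb R[(\mathbb Z_p)^k]})$ when $m\geq p^k$, again yielding the contradiction.

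The reduction is elementary; the real work lies in the index computations of cases (2) and (3). For $\mathbb Z_{p^k}$ the ordinary prime-index machinery does not apply directly because the target representation sphere is not free, and obtaining the sharp dimension thresholds requires the isotropy bookkeeping (and, for $p=2$, the inductive construction) carried out in \cite{Mu2} and \cite{Tu}. For elementary abelian $(\mathbb Z_p)^k$ one needs Volovikov's cohomological index, whose behavior is more delicate than in the cyclic case, exactly as developed in \cite{Vo}. In short, once the reduction above is in place the proof is a matter of citing these four theorems, and I would present the statement as a repackaging of their conclusions in the coincidence-set language used throughout this section.
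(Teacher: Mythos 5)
The paper does not actually prove Theorem~\ref{BU type}; it is stated as a collection of results quoted from \cite{Mu1}, \cite{Mu2}, \cite{Tu}, and \cite{Vo}, and each of those sources already formulates its conclusion in essentially the coincidence-set language $A_f\neq\emptyset$ (or the equivalent statement about maps $X\to\mathbb R$ with $f|_{Gx}$ constant for some orbit). So there is no ``paper's own proof'' to compare against, and your task reduces to checking that the reduction you describe is sound and consistent with what the cited papers do.

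Your reduction is correct and is exactly the standard machinery underlying all four references. Packaging $f$ into the equivariant map $F(x)=(f(gx))_{g\in G}$ into the regular representation, projecting off the trivial summand to get $\overline F:X\to\overline{\mathbb R[G]}$, and observing that $\overline F^{-1}(0)=A_f$ so that $A_f=\emptyset$ yields a $G$-map $X\to S(\overline{\mathbb R[G]})$ is precisely the reduction used in Munkholm's and Volovikov's work. The dimension count in case (1) is right: $S(\overline{\mathbb R[\mathbb Z_p]})\cong S^{p-2}$ is a free $\mathbb Z_p$-sphere, and nonexistence of a $\mathbb Z_p$-map $S^m\to S^{p-2}$ for $m\ge p-1$ is the classical $\mathbb Z_p$-Borsuk--Ulam theorem. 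Your observation that for $k\ge 2$ the sphere $S(\overline{\mathbb R[\mathbb Z_{p^k}]})$ (respectively $S(\overline{\mathbb R[(\mathbb Z_p)^k]})$) is no longer free, so the naive dimension bound $p^k-1$ is not the right threshold and one must invoke the finer isotropy/index analysis of \cite{Mu2}, \cite{Tu}, \cite{Vo}, is also accurate and is exactly why those thresholds take the more complicated forms they do. In short, your proof is a correct and appropriately honest unpacking of a theorem the paper deliberately leaves as a citation; it neither adds nor needs anything beyond the reduction plus the quoted results.
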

Statement  (1) was proved in~\cite{Mu1}, (2) in~\cite{Mu2} for odd $p$ and in~\cite{Tu} for $p=2$, and (3) was proved in~\cite{Vo}.

\begin{prop}
For prime $p$ and $m\ge p-1$, $d\cat(L^m_p)\ge p-1$.
\end{prop}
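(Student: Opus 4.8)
The plan is to combine the Borsuk--Ulam input of \theoref{BU type}(1) with the characterization of $d\cat(u)$ provided by \propref{BU}, and then pass from $d\cat(u)$ to $d\cat(L^m_p)$. Set $G=\mathbb Z_p$ acting freely and $\mathbb C$-linearly on the finite complex $X=S^m$, so that $X/G=L^m_p$, the classifying map of the covering $S^m\to L^m_p$ is $u\colon L^m_p\to B\mathbb Z_p=L^\infty_p$, and $|G|=p$. Put $n=p-1=|G|-1$.

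First I would apply \theoref{BU type}(1): since $p$ is prime and $m\ge p-1$, the coincidence set $A_f$ is nonempty for every continuous function $f\colon S^m\to\mathbb R$. By \propref{BU} (the implication $2\Rightarrow1$, applied with this $G$, this $X$, and $n=|G|-1$) this gives $d\cat(u)\ge n=p-1$.

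Finally, since $d\cat(f)\le\min\{d\cat X,d\cat Y\}$ for every map $f\colon X\to Y$ (recalled in the preliminaries), applying this to $u\colon L^m_p\to L^\infty_p$ yields $d\cat(u)\le d\cat(L^m_p)$. Chaining the two inequalities gives $d\cat(L^m_p)\ge d\cat(u)\ge p-1$, which is the claim. Together with the upper bound $d\cat(L^m_p)\le p-1$ of \theoref{upper dcat}, this pins down $d\cat(L^m_p)=p-1$ for prime $p$ and $m\ge p-1$.

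The argument is really just an assembly of results already in hand, so I do not anticipate a serious obstacle; the only points that need to be verified are that the hypotheses are literally satisfied — that the linear $\mathbb Z_p$-action on $S^m$ is free (true, since it is a lens-space action), that $S^m$ is a finite CW complex (so \propref{BU} applies), and that $m\ge p-1$ is precisely the range required in \theoref{BU type}(1). If anything is delicate here, it is the bookkeeping of the index $n=p-1$ against $|G|-1$ rather than any substantive step.
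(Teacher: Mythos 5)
Your argument is exactly the paper's: apply \theoref{BU type}(1) to get $A_f\ne\emptyset$ for the free $\mathbb Z_p$-action on $S^m$ when $m\ge p-1$, feed this into \propref{BU} (the implication $2\Rightarrow 1$ with $n=|G|-1=p-1$) to obtain $d\cat(u)\ge p-1$, and then use $d\cat(u)\le d\cat(L^m_p)$. The proposal is correct and follows the same route, just spelled out in slightly more detail.
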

\begin{proof}
Theorem~\ref{BU type}(1) and Theorem~\ref{BU}, applied to the $\mathbb Z_p$-action on $S^m$ that defines $L^m_p$, imply that
$d\cat(u)\ge p-1$ where $u:L^m_p\to L^\infty_p$ is the inclusion. Then by inequality~\ref{dcat of maps}, $d\cat L^m_p\ge p-1$.
\end{proof}
As a corollary, in view of Corollary~\ref{upper dcat}, we obtain the following
\begin{thm}\label{=}
For prime $p$ and $m\ge p-1$, $d\cat(L^m_p)= p-1$.
\end{thm}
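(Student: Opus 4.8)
The plan is simply to sandwich $d\cat(L^m_p)$ between the upper and lower bounds already established. For the inequality $d\cat(L^m_p)\le p-1$ one invokes Theorem~\ref{upper dcat}, which holds with no restriction on $m$ or $p$; its proof rests on the pull-back diagram comparing $P_{p-1}(L^m_p)$ with $E_{p-1}\mathbb Z_p$ over $L^\infty_p$, together with the connectivity estimate of Proposition~\ref{q_n} showing that the fibrewise comparison map $q_{p}$ is an $m$-equivalence, so that the section over $L^\infty_p$ (which exists since $\dTC(\mathbb Z_p)=d\cat(\mathbb Z_p)=p-1$) descends to $L^m_p$ for dimensional reasons.

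For the reverse inequality $d\cat(L^m_p)\ge p-1$ one uses the Proposition immediately preceding the theorem: apply the Borsuk--Ulam-type result of Theorem~\ref{BU type}(1) to the free $\mathbb Z_p$-action on $S^m$ with $m\ge p-1$ and $p$ prime, so that the coincidence set $A_f$ is nonempty for every continuous $f\colon S^m\to\mathbb R$; then Proposition~\ref{BU}, with $n=p-1=|\mathbb Z_p|-1$, translates this into $d\cat(u)\ge p-1$ for the inclusion $u\colon L^m_p\to L^\infty_p$. Since $d\cat(u)\le\min\{d\cat(L^m_p),d\cat(L^\infty_p)\}\le d\cat(L^m_p)$, we conclude $d\cat(L^m_p)\ge p-1$.

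Combining the two inequalities gives the asserted equality. The only point requiring any care is the matching of hypotheses: primality of $p$ is essential for Theorem~\ref{BU type}(1) (the relevant Borsuk--Ulam property fails for composite orders), and the hypothesis $m\ge p-1$ is exactly what that theorem requires, while the upper bound imposes nothing beyond this. So there is no genuine obstacle here — the substantive work lies in Theorem~\ref{upper dcat}, Proposition~\ref{BU}, and Theorem~\ref{BU type}(1), and the statement is their immediate synthesis, recorded as a corollary.
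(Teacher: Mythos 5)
Your proposal is correct and follows exactly the paper's route: the upper bound is Theorem~\ref{upper dcat}, and the lower bound is the immediately preceding proposition, obtained by combining Theorem~\ref{BU type}(1) with Proposition~\ref{BU} for the free $\mathbb Z_p$-action on $S^m$ and then using $d\cat(u)\le d\cat(L^m_p)$. The paper records the theorem as exactly this synthesis.
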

\begin{cor}
For prime $p$ and $m\ge p-1$, $\dTC(L^m_p)\ge p-1$.
\end{cor}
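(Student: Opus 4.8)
The plan is to deduce the topological-complexity bound from the category bound already established in Theorem~\ref{=}, using the general relation between $d\cat$ and $\dTC$ of a map together with the structure of the diagonal map. First I would recall the standard inequality $d\cat(f)\le d\TC(f)$ for a map $f$; indeed, from Proposition~\ref{liftD}, a $\mathcal B_{n+1}$-section over $X\times X$ pulls back along the diagonal $X\to X\times X$ to a $\mathcal B_{n+1}$-section over $X$, so $d\cat X\le\dTC X$ for any space, and more generally $d\cat(u)\le d\TC(u)$ for the inclusion $u:L^m_p\to L^\infty_p$ viewed as a map.

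Concretely, I would set $u:L^m_p\to L^\infty_p$ to be the inclusion and use Corollary~\ref{Lift} and Proposition~\ref{liftD}(b): $\dTC(u)\le n$ iff $u\times u$ admits a lift with respect to $\mathcal B_{n+1}(\bar p^{L^\infty_p})$. Restricting such a lift along the diagonal $\Delta:L^m_p\to L^m_p\times L^m_p$ (composed with $u\times u = (u\times u)\circ\Delta$ on the diagonal) produces a lift of $u$ with respect to $\mathcal B_{n+1}(p_0^{L^\infty_p})$, since the diagonal of $L^\infty_p\times L^\infty_p$ maps into the based path fibration. Hence $d\cat(u)\le\dTC(u)$, and a fortiori $d\cat(u)\le\dTC(L^m_p)$ because $\dTC(u)\le\dTC(L^m_p)$ from the evident factorization through $L^m_p$. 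Combining with the chain $p-1\le d\cat(L^m_p)$ (which we have, for prime $p$ and $m\ge p-1$, by Theorem~\ref{=}) and with $d\cat(L^m_p)\le d\cat(u)$... wait — here I need the inequality in the right direction.

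The cleaner route, and the one I would actually carry out, bypasses the map $u$ entirely: for any space $X$ one has $d\cat X\le\dTC X$ directly, by restricting a $\mathcal B_{n+1}$-section of $p:P(X)\to X\times X$ along the diagonal embedding $X\hookrightarrow X\times X$ to obtain a $\mathcal B_{n+1}$-section of $p_0:P_0(X)\to X$ (after a based-path reparametrization / using that the diagonal maps into a contractible sub-fibration), which by Theorem~\ref{dcat} gives $d\cat X\le n$. Applying this with $X=L^m_p$ yields $d\cat(L^m_p)\le\dTC(L^m_p)$, and then Theorem~\ref{=} gives $\dTC(L^m_p)\ge d\cat(L^m_p)=p-1$ for prime $p$ and $m\ge p-1$.

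The only subtle point — the step I expect to be the main obstacle to state crisply — is the passage from a section of $p$ over the diagonal to a section of $p_0$: one must observe that $p^{-1}(\Delta(X))$ is the space of loops-based-at-varying-points, which retracts equivariantly onto $P_0(X)$ fibrewise via a based reparametrization that is compatible with $\mathcal B_{n+1}$, so the $\mathcal B_{n+1}$-section survives. This is standard (it underlies the classical inequality $\cat\le\TC$), and since it is purely formal I would simply cite it or dispatch it in one line, then invoke Theorem~\ref{=}.
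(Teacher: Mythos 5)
Your final argument is exactly the paper's one-line proof: the standard inequality $d\cat X\le\dTC X$ (here obtained by restricting a $\mathcal B_{n+1}$-section of the free path fibration to the diagonal and reparametrizing to the based path fibration) applied to $X=L^m_p$, combined with Theorem~\ref{=} giving $d\cat(L^m_p)=p-1$. The detour through $d\cat(u)$ and $\dTC(u)$ for the inclusion $u:L^m_p\to L^\infty_p$ in your first paragraph was unnecessary and, as you noticed yourself, runs the inequality in the wrong direction; the cleaner route you settle on is correct and matches the paper.
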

\begin{proof}
For $m\ge p$, $\dTC(L^m_p)\ge d\cat(L^m_p)\ge p-1$.
\end{proof}

\begin{prop}\label{k}
For prime $p$ and $m\ge p^k$, $d\cat(L^m_p)^k\ge p^k-1$.
\end{prop}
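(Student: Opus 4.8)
The plan is to reduce the statement $d\cat(L^m_p)^k\ge p^k-1$ to a Borsuk--Ulam type result for the elementary abelian group $(\mathbb Z_p)^k$ via the characterization of $d\cat$ of a map in Proposition~\ref{BU}. Let $X=(S^m)^k$ with the diagonal action of $G=(\mathbb Z_p)^k$, where the $j$-th factor of $G$ acts on the $j$-th sphere by the linear free action defining $L^m_p$. This action is free, $|G|=p^k$, and $X/G=(L^m_p)^k$. Let $u\colon (L^m_p)^k\to BG=B(\mathbb Z_p)^k$ be the classifying map of the covering $p\colon X\to X/G$. Since $d\cat((L^m_p)^k)\ge d\cat(u)$, it suffices to prove $d\cat(u)\ge p^k-1$.

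First I would invoke Proposition~\ref{BU} with $n=p^k-1$: the inequality $d\cat(u)\ge p^k-1$ is equivalent to the assertion that the coincidence set $A_f\ne\emptyset$ for every continuous function $f\colon X\to\mathbb R$. So the whole proposition comes down to a single Borsuk--Ulam statement: \emph{for every $f\colon (S^m)^k\to\mathbb R$ there is a $G$-orbit on which $f$ is constant}, under the hypothesis $m\ge p^k$. Next I would feed this to Theorem~\ref{BU type}(3), which says exactly that $A_f\ne\emptyset$ whenever $G=(\mathbb Z_p)^k$ acts freely on an $(m-1)$-connected CW complex with $m\ge p^k$. The product $(S^m)^k$ is $(m-1)$-connected (each $S^m$ is $(m-1)$-connected, and a finite product of $(m-1)$-connected spaces is $(m-1)$-connected), and the diagonal action is free because each factor action is free. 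Hence Theorem~\ref{BU type}(3) applies and gives $A_f\ne\emptyset$ for all $f$.

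Combining the two steps: $A_f\ne\emptyset$ for all $f$ gives $d\cat(u)\ge p^k-1$ by Proposition~\ref{BU}, and therefore $d\cat((L^m_p)^k)\ge d\cat(u)\ge p^k-1$, which is the claim. I expect the main (really the only) subtlety to be bookkeeping: checking that the hypotheses of Theorem~\ref{BU type}(3) are genuinely met --- namely that the relevant space is $(m-1)$-connected and the $(\mathbb Z_p)^k$-action is free --- and that the orbit space is indeed $(L^m_p)^k$ with $u$ its classifying map so that Proposition~\ref{BU} is applicable verbatim. There is no hard estimate or construction; the content is the cited Vol'vskii-type theorem, and the proposition is essentially a repackaging of it through the $d\cat$-of-a-map formalism already set up in the preceding section.
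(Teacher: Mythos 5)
Your proposal is correct and follows exactly the paper's argument: reduce via Proposition~\ref{BU} (applied to the classifying map $u$ of the free product $(\mathbb Z_p)^k$-action on $(S^m)^k$) to the assertion $A_f\ne\emptyset$, and supply that from Theorem~\ref{BU type}(3). The extra bookkeeping you spell out (freeness of the action, $(m-1)$-connectivity of $(S^m)^k$, $d\cat((L^m_p)^k)\ge d\cat(u)$) is exactly what the paper leaves implicit.
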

\begin{proof}
Consider the product action of $G=(\mathbb Z_p)^k$ on $(S^m)^k$ for odd $m$.
By Theorem~\ref{BU type}(3)  for $m\ge p^k$ we have $A(f)\ne\emptyset$ for any map $f:(S^m)^k\to\mathbb R$.
Then by Proposition~\ref{BU} $$d\cat(u:(L^m_p)^k\to (L^\infty_p)^k)\ge p^k-1.$$ Hence, $d\cat(L^m_p)^k\ge p^k-1$.
\end{proof}

\begin{thm}\label{p^k}
For prime $p$,
$d\cat(L^m_{p^k})= p^k-1$ when $p$ is odd and $m\ge kp^k-(k+2)p^{k-1}+3$, and
when $p=2$ and $m\ge (k-1)2^{k-1}+1$.
\end{thm}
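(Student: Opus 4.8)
The plan is to combine the upper and lower bounds already established in the excerpt. For the upper bound I would invoke Theorem~\ref{TC upper}, which is not quite what we want here since it bounds $\dTC$; instead, for the category statement $d\cat(L^m_{p^k}) = p^k-1$ I would apply Theorem~\ref{upper dcat} directly: it gives $d\cat(L^m_{p^k}) \le p^k - 1$ for \emph{all} $m$ and all prime powers $p^k$ (the statement is phrased for $L^m_p$, but $p$ there is an arbitrary integer, in particular $p^k$). So the upper bound is free.

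For the lower bound, the key is Proposition~\ref{BU} together with the appropriate Borsuk--Ulam type result from Theorem~\ref{BU type}(2). The cyclic group $G = \mathbb Z_{p^k}$ acts freely by a $\mathbb C$-linear action on $S^m$ with $m = 2n-1$, and $L^m_{p^k}$ is the orbit space; let $u : L^m_{p^k} \to L^\infty_{p^k} = B\mathbb Z_{p^k}$ be the classifying map of this free action. First I would check the hypothesis of Theorem~\ref{BU type}(2): for odd $p$ the bound $m \ge kp^k - (k+2)p^{k-1} + 3$ is exactly the one assumed in the statement, and for $p=2$ the bound $m \ge (k-1)2^{k-1}+1$ is the one from~\cite{Tu}; in both cases we conclude $A_f \ne \emptyset$ for every continuous $f : S^m \to \mathbb R$. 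Then Proposition~\ref{BU} (with $|G| = p^k = n+1$, i.e.\ $n = p^k-1$) gives $d\cat(u) \ge p^k - 1$. Since $d\cat(u) \le d\cat(L^m_{p^k})$ by the general inequality $d\cat(f) \le \min\{d\cat X, d\cat Y\}$ recorded in the excerpt, we get $d\cat(L^m_{p^k}) \ge p^k - 1$.

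Combining the two bounds yields $d\cat(L^m_{p^k}) = p^k - 1$ under the stated hypotheses, which completes the proof. I do not expect any serious obstacle: the proof is a packaging of Theorem~\ref{upper dcat}, Proposition~\ref{BU}, and Theorem~\ref{BU type}(2), exactly parallel to the prime case $k=1$ treated in Theorem~\ref{=} just above. The one point requiring a line of care is the parity split — the odd-$p$ and $p=2$ dimension thresholds come from different sources (\cite{Mu2} versus \cite{Tu}), so I would state the two cases separately when quoting Theorem~\ref{BU type}(2), but this is bookkeeping rather than a genuine difficulty.
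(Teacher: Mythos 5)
Your proposal is correct and follows essentially the same route as the paper: lower bound from Proposition~\ref{BU} combined with Theorem~\ref{BU type}(2), upper bound from Theorem~\ref{upper dcat} applied with $p$ replaced by $p^k$. The one extra step you make explicit (passing from $d\cat(u)\ge p^k-1$ to $d\cat(L^m_{p^k})\ge p^k-1$ via $d\cat(u)\le d\cat(L^m_{p^k})$) is indeed how the paper's lower bound is meant to be read, so there is no discrepancy.
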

\begin{proof}
Under these conditions on $m$ Theorem~\ref{BU type}(2) implies that $A_f\ne\emptyset$. Then by Proposition~\ref{BU}, we obtain that
$$d\cat(u:L^m_{p^k}\to L^\infty_{p^k})\ge p^k-1.$$ Therefore, $d\cat(L^m_{p^k})\ge p^k-1$.
Since by Corollary~\ref{upper dcat} $d\cat(L^m_{p^k})\le p^k-1$, we obtain that
$$d\cat(L^m_{p^k})= p^k-1.$$
\end{proof}

\subsection{Counterexamples to the product formula}
Counterexamples to the product formula for groups can be easily derived for both $d\cat$ and $\dTC$  from~\cite{DJ},\cite{KW1} and ~\cite{KW2}.
Namely,
$$d\cat(\mathbb Z_2\times\mathbb Z_2)=3> 1+1=d\cat\mathbb Z_2+d\cat\mathbb Z_2\ \ \text{and}$$
$$\dTC(\mathbb Z_2\times\mathbb Z_2)\ge d\cat(\mathbb Z_2\times\mathbb Z_2)=3> 1+1=\dTC(\mathbb Z_2)+\dTC(\mathbb Z_2).$$
Here we give counterexamples for manifolds.
\begin{prop}
For a prime $p$ and $m\ge p^2$
$$
d\cat(L_p^m\times L^m_p)> d\cat(L_p^m)+d\cat(L_p^m).
$$
If, in addition, $p\ge 5$  then
$$
\dTC(L_p^m\times L_p^m) > \dTC(L^m_p)+\dTC(L^m_p).
$$
\end{prop}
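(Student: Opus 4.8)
The plan is to prove both inequalities by combining the computations for lens spaces established earlier in the paper. For the $d\cat$ inequality, first I would use Theorem~\ref{=} (or more precisely the proposition preceding it): for prime $p$ and $m\ge p-1$ we have $d\cat(L^m_p)=p-1$, so the right-hand side equals $2(p-1)=2p-2$. For the left-hand side I would invoke Proposition~\ref{k}: since $m\ge p^2$, we get $d\cat(L^m_p\times L^m_p)\ge p^2-1$. Then it remains to observe the elementary arithmetic fact that $p^2-1>2p-2$, i.e.\ $p^2-2p+1=(p-1)^2>0$, which holds for all $p\ge 2$. This already gives a strict violation of the product inequality $d\cat(X\times Y)\le d\cat X+d\cat Y$.

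For the $\dTC$ inequality I would use Theorem~\ref{TC upper}: for odd $p$ and $m\ge 2p-1$ we have $\dTC(L^m_p)\le 2p-1$, so the right-hand side is at most $2(2p-1)=4p-2$. For the left-hand side I would bound $\dTC(L^m_p\times L^m_p)\ge d\cat(L^m_p\times L^m_p)\ge p^2-1$ using the standard inequality $d\cat\le\dTC$ together with Proposition~\ref{k} (valid since $m\ge p^2\ge 2p-1$ when $p\ge 2$). Then the desired strict inequality follows from $p^2-1>4p-2$, i.e.\ $p^2-4p+1>0$; this quadratic has roots $2\pm\sqrt{3}$, so it is positive precisely when $p>2+\sqrt3\approx 3.73$, which is why the hypothesis $p\ge 5$ is imposed (and indeed $p\ge 5$ is prime, so $p\ge 5$ suffices). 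I should double-check that all the hypotheses line up: $m\ge p^2$ forces $m\ge 2p-1$ and $m\ge p-1$, so every cited bound applies simultaneously.

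I do not anticipate a serious obstacle here: the proof is essentially bookkeeping, assembling the upper bound $\dTC(L^m_p)\le 2p-1$ from Theorem~\ref{TC upper}, the lower bound $d\cat((L^m_p)^2)\ge p^2-1$ from Proposition~\ref{k}, the trivial inequality $d\cat\le\dTC$, and the two elementary quadratic inequalities $(p-1)^2>0$ and $p^2-4p+1>0$. The only point requiring a moment's care is making sure the numerical thresholds on $m$ and $p$ are consistent across all invoked statements; in particular one must note that for odd prime $p\ge 5$ the lens space upper bound used is $\dTC(L^m_p)\le 2p-1$ rather than the sharper $m\le p-1$ case, which does not apply when $m\ge p^2$. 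Once these are checked, both strict inequalities drop out immediately.
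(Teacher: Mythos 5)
Your proposal is correct and follows essentially the same route as the paper: Proposition~\ref{k} for the lower bound $d\cat\bigl((L^m_p)^2\bigr)\ge p^2-1$, Theorem~\ref{=} and Theorem~\ref{TC upper} for the upper bounds $d\cat(L^m_p)=p-1$ and $\dTC(L^m_p)\le 2p-1$, the inequality $d\cat\le\dTC$, and the two elementary quadratic comparisons. Nothing is missing.
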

\begin{proof}
By Proposition~\ref{k}, for $m\ge p^2$, 
$
d\cat(L_p^m\times L^m_p)\ge p^2-1$.
By Theorem~\ref{=},  for $m\ge p-1$, we have $d\cat(L^m_p)=p-1$.
Since $p^2-1>2(p-1)$, the result follows.

Since $\dTC\ge d\cat$, by Proposition~\ref{k}, for $m\ge p^2$, 
$
\dTC(L_p^m\times L_p^m)\ge p^2-1.$
By Theorem~\ref{TC upper}, for $m\ge 2p-1$, we have $\dTC(L^m_p)\le 2p-1$. Since for $p\ge 5$, $p^2-1>4p-2$, we obain 
$$\dTC(L_p^m\times L_p^m) > \dTC(L^m_p)+\dTC(L^m_p).$$
\end{proof}

\section{Acknowledgments} The author  thanks
FIM at ETH, Z\"urich and MPI for Mathematics in Bonn for hospitality.


\begin{thebibliography}{[FGLO]}
\bibliographystyle{amsalpha}



\bibitem{ABS} M. Aguiar, N. Bergeron, and F. Sottile. Combinatorial Hopf algebras and generalized Dehn-Sommerville
equations. Compos. Math., 142 (2006), 1-30.

%\bibitem[Bar]{Bar} T. Bartsch, Topological Methods for Variational Problems with Symmetries, Lecture Notes in Mathematics, vol.1560, Springer, 1993.

\bibitem{Ba} G. Baumslag. Some aspects of groups with unique roots. Acta Math., 10 (1960), 277-303.

\bibitem{BH} M. Bridson, A. Hawfliger, Metric spaces of non-positive curvature, Springer, 1999.
%\bibitem[BMS]{BMS} Z. Błaszczyk, W. Marzantowicz, M. Singh, General Bourgin–Yang theorems, Top. Appl. 249 (2018) 112-126.
 
\bibitem{Bro}
K. Brown, Cohomology of Groups. \emph{Graduate Texts in Mathematics},
\textbf{87} Springer, New York Heidelberg Berlin, 1994.

\bibitem{C}
  G. Carlsson. Equivariant stable homotopy and Sullivan's conjecture, 
  Invent. Math. 103 (1991), no. 3, 497–525.

\bibitem{CV}  D. C. Cohen, L. Vandembroucq, Topological complexity of the Klein bottle, J Appl. and Comput. Topology 1 (2017), 199-213.
  
\bibitem{CLOT} O. Cornea, G. Lupton, J. Oprea, D. Tanré, \emph{Lusternik-Schnirelmann category}, Math. Surveys Monogr. 103, AMS,  2003. 

%\bibitem[CV]{CV} D. Cohen, L. Vandembroucq, {\em Topological complexity of Klein bottle}.J. Appl. Comput. Topol. 1 (2017), no. 2, 199–213.


\bibitem{DaJ} N. Daundkar, E. Jauhari, {\em On the complexity of parametrized motion planning algorithms} Preprint, 2025 arXiv:2508.17629

\bibitem{Dr1}  A. Dranishnikov, {\em On dimension of product of groups}.  Algebra and Discrete Mathematics,  28  (2019), no 2,  47-56.

%\bibitem[Dr2]{Dr2} A. Dranishnikov, {\em The topological complexity and the homotopy cofiber of the diagonal map for non-orientable surfaces.}
 %Proc. Amer. Math. Soc. 144 (2016), no. 11, 4999-5014. 

\bibitem{Dr2} A. Dranishnikov,  Distributional topological complexity of finitely generated groups, Contemp. Math., 830 American Mathematical Society, 2025, 83-105.

 
\bibitem{DJ} A.~Dranishnikov, E.~Jauhari, Distributional topological complexity and LS-category. In \emph{Topology and AI}, ed. M.~Farber \emph{et al.}, EMS Ser. Ind. Appl. Math., 4, EMS Press, Berlin, 2024, pp. 363–385. 

%\bibitem{DZ} E. Dror Farjoun, A Zabrodsky, Fixed points and homotopy fixed points, Comment. Math. Helvetici 63 (1988) 256-295.

%\bibitem[DS]{DS} A. Dranishnikov, R. Sadykov, {\em The topological complexity of the free product}.  Math. Z. 293 (2019), no. 1-2, 407-416.

\bibitem{DMN}
 Dwyer, William; Haynes Miller; Joseph Neisendorfer. 
  Fibrewise Completion and Unstable Adams Spectral Sequences,
 Israel J. Math. 66 (1989), no. 1-3, 160–178.

\bibitem{EG} S. Eilenberg, T. Ganea, {\em On the Lusternik-Schnirelmann Category of Abstract Groups.} Annals of
Mathematics, 65, (1957), 517-518.

%\bibitem[ES]{ES} S. Eilenberg, N. Steenrod, Foundation of Algebraic Topology, Princeton University Press, 1952.

\bibitem{F1} M.~Farber, Topological complexity of motion planning. Discrete Comput. Geom. 29 (2003), no.~2, 211-221. 


\bibitem{F2} M.~Farber, Invitation to topological robotics. Zurich Lectures in Advanced Mathematics. European Mathematical Society (EMS), Zurich, 2008. 

\bibitem{F3} M.~Farber, Topology of robot motion planning, NATO Sci. Ser. II Math. Phys. Chem., 217
Springer, Dordrecht, 2006, 185-230.


\bibitem{FM} M.~Farber, S~Mescher, {\em On the topological complexity of aspherical spaces}. Journal of Topogy and Analysis, 2019, arXiv:1708.06732v2 [math.AT].

\bibitem{FGLO} M. Farber, M. Grant, G. Lupton, and J. Oprea, {\em Bredon cohomology and robot motion planning.} Algebr. Geom. Topol. 19 (2019), no. 4, 2023–2059.

 \bibitem{FTY} M.~Farber, S.~Tabachnikov, S.~Yuzvinsky, Topological robotics: motion planning in projective spaces. \emph{Intl. Math. Res. Not.} (2003), no.~34, 1853--1870.

\bibitem{Gr} M. Gromov, Hyperbolic groups, Essays in group theory, 75-263, Math. Sci. Res. Inst. Publ., 8, Springer, New York, 1987.

\bibitem{Ha} A. Hatcher, Algebraic Topology, Cambridge, 2002.

\bibitem{HLR} Ch. J. Hillar, L. Levine,  D. Rhea, Equations Solvable by  Radicals in a
 Uniquely Divisible Group, Bull. Lond. Math. Soc. 45 (2013), no 1, 61-79.

\bibitem{J1} E. Jauhari, Distributional category of manifolds, Bol. Soc. Mat. Mex. (3) 31 (2025), 63, pp. 34.

\bibitem{J2} E. Jauhari, On sequential version of distributional topological complexity, Topology Appl. 363 (2025), 109271, pp. 28.

\bibitem{JO1} E. Jauhari, J. Oprea, Bochner-type theorems for distributional category, Preprint 2025, arXiv:2505.21763.

\bibitem{JO2} E. Jauhari, J. Oprea, On distributional one-category, diagonal distributional complexity, and related invariants, Preprint 2025, arXiv:2508.06973.

\bibitem{KK} S. Kallel, R.Karouri,  {\em Symmetric joins and weighted barycenters}  Adv. Nonlinear Stud. (1), \textbf{11} (2011), 117-143.

\bibitem{KW1} B. Knudsen, S. Weinberger, {\em Analog category and complexity}, SIAM J. Appl. Algebra Geom. 8 (2024), no. 3, 713–732.

\bibitem{KW2} B. Knudsen, S. Weinberger, {\em On the analog category of finite groups}, Preprint 2024 to appear in AGT.

\bibitem{La}
  Lannes, Jean. Sur les espaces fonctionnels dont la source est le classifiant d'un p-groupe abélien élémentaire,
Inst. Hautes Études Sci. Publ. Math. No. 75 (1992), 135–244.

\bibitem{Lu}  W. Luck, Survey on classifying spaces for families of subgroups, Infinite groups: geometric, combinatorial
and dynamical aspects, 269-322, Progr. Math., 248, Birkhuser, Basel, 2005.

\bibitem{LS}  L.~Lusternik, L.~Schnirelmann, Sur le probl\`eme de trois g\'eodesiques ferm\'ees sur les surfaces de genre 0 (in French). \emph{C. R. Acad. Sci.  Paris} \textbf{189} (1929), 269--271.

%\bibitem{M} E. Michael,  Continuous Selections II, Ann. of Math. 64  (1956), no 3, 562-580.

\bibitem{Mu1} H.J. Munkholm, Borsuk-Ulam type theorems for proper $\mathbb Z_p$-actions on (mod $p$ homology) $n$-spheres, Math Scand 24 (1969) 167-185.

\bibitem{Mu2} H.J. Munkholm, Borsuk-Ulam theorem for proper $\mathbb Z_{p^k}$-action on $S^{2n-1}$ and maps $S^{2n-1}\to\mathbb R^m$, Osaka J. Math. 7
(1970) 451-456.

\bibitem{Na} M. Nakaoka, Cohomology of symmetric products, J. Institute of Polytechnics, Osaka city University 8,
no. 2, (1956) 121-140.

%\bibitem[Ru]{Ru}  Yu. Rudyak, {\em On topological complexity of Eilenberg-MacLane spaces.} Topology Proc. 48 (2016), 65-67.

 \bibitem{Tu} Yu. Turygin,  Borsuk-Ulam property of finite groups actions 
  on manifolds and applications.  Ph.D. thesis, University of Florida, 2007.

\bibitem{V} V. Vassiliev, Topological order complexes and resolutions of discrimant sets, Publications Institut
Math´ematiques {\bf 66}  80 (1999), 165-185.

\bibitem{Vo} A. Volovikov, On index of $G$-spaces, Sb. Math. 191 (2000) 1259-1277.

\end{thebibliography}
\end{document}